\newcommand \listoftodos{\section*{Todo list} \@starttoc{tdo}}
\newcommand\l@todo[2]
\noindent \textit{#2}, \parbox{10cm}{#1}\par} \makeatother
\numberwithin{equation}{section}
\numberwithin{figure}{section}
\theoremstyle{plain}
\newtheorem{thm}{Theorem}[section]
\newtheorem{fact}[thm]{Fact}
\newtheorem{cor}[thm]{Corollary}
\newtheorem{lem}[thm]{Lemma}
\newtheorem{prop}[thm]{Proposition}
\theoremstyle{remark}
\newtheorem{rem}[thm]{Remark}
\newcommand{\crochet}[1]{{\left\langle #1 \right\rangle}}
  \newcounter{constant} 
  \newcommand{\newconstant}[1]{\refstepcounter{constant}\label{#1}} 
  \newcommand{\useconstant}[1]{C_{\ref{#1}}}
\begin{document}

\title{Brownian motion and Random Walk above Quenched~Random~Wall}
\author{Bastien Mallein\footnote{LPMA, UPMC and DMA, ENS Paris} \and 
	Piotr Mi\l{}o\'s\footnote{Faculty of Mathematics, Informatics and Mechanics, University of Warsaw, Banacha 2, 02-097 Warszawa, Poland. Email: \texttt{pmilos@mimuw.edu.pl}}}

\maketitle
\textcolor{blue}{}\global\long\def\sbr#1{\left[#1\right] }
\textcolor{blue}{}\global\long\def\cbr#1{\left\{  #1\right\}  }
\textcolor{blue}{}\global\long\def\rbr#1{\left(#1\right)}
\textcolor{blue}{}\global\long\def\ev#1{\mathbb{E}{#1}}
\textcolor{blue}{}\global\long\def\R{\mathbb{R}}
\textcolor{blue}{}\global\long\def\E{\mathbb{E}}
\textcolor{blue}{}\global\long\def\norm#1#2#3{\Vert#1\Vert_{#2}^{#3}}
\textcolor{blue}{}\global\long\def\pr#1{\mathbb{P}\rbr{#1}}
\textcolor{blue}{}\global\long\def\qq{\mathbb{Q}}
\textcolor{blue}{}\global\long\def\aa{\mathbb{A}}
\textcolor{blue}{}\global\long\def\ind#1{1_{#1}}
\textcolor{blue}{}\global\long\def\pp{\mathbb{P}}
\textcolor{blue}{}\global\long\def\cleq{\lesssim}
\textcolor{blue}{}\global\long\def\ceq{\eqsim}
\textcolor{blue}{}\global\long\def\Var#1{\mathrm{Var}(#1)}
\textcolor{blue}{}\global\long\def\dd#1{\textnormal{d}#1}
\textcolor{blue}{}\global\long\def\eqdef{:=}
\textcolor{blue}{}\global\long\def\ddp#1#2{\left\langle #1,#2\right\rangle }
\textcolor{blue}{}\global\long\def\Z{\mathbb{Z}}
\textcolor{blue}{{} }\global\long\def\N{\mathbb{N}}
\textcolor{blue}{{} }
\textcolor{blue}{}\global\long\def\nC#1{\newconstant{#1}}
\textcolor{blue}{}\global\long\def\C#1{\useconstant{#1}}
\textcolor{blue}{}\global\long\def\nC#1{\newconstant{#1}\text{nC}_{#1}}
\textcolor{blue}{}\global\long\def\C#1{C_{#1}}
\textcolor{blue}{}\global\long\def\meas{\mathcal{M}}
\textcolor{blue}{}\global\long\def\cSpace{\mathcal{C}}
\textcolor{blue}{}\global\long\def\pspace{\mathcal{P}}
\begin{abstract}
We study the persistence exponent for the first passage time of a random walk below the trajectory of another random walk. More precisely, let $\{B_n\}$ and $\{W_n\}$ be two centered, weakly dependent random walks. We establish that $\mathbb{P}(\forall_{n\leq N} B_n \geq W_n|W) = N^{-\gamma + o(1)}$ for a non-random $\gamma\geq 1/2$. In the classical setting, $W_n \equiv 0$, it is well-known that $\gamma = 1/2$. We prove that for any non-trivial $W$ one has $\gamma>1/2$ and the exponent $\gamma$ depends only on $\text{Var}(B_1)/\text{Var}(W_1)$.

Our result holds also in the continuous setting, when $B$ and $W$ are independent and possibly perturbed Brownian motions or Ornstein-Uhlenbeck processes. In the latter case the probability decays at exponential rate.
\end{abstract}

\section{Introduction and main results}

Let $\{X_t\}_{t\geq 0}$ be a stochastic process. In many cases of interest there exists $\gamma >0$ such that
\[
  \pp(\forall_{s \leq t} X_s \geq -1) = t^{-\gamma + o(1)}.
\]
The exponent $\gamma$ is often called the persistence exponent associated to the process $X$. The persistence exponent of stochastic processes have received a substantial research attention. We refer to \cite{Aurzada2015} for a review on this subject.

In this paper we are interested in the rate of decay of 
\[
	p(t) := \pp(\forall_{s\leq t} B_s \geq f(t)),
\]
where $B$ is a standard Brownian motion starting from $0$ and $f$ is a continuous function. The case when $f(t) = -1$ is particularly easy following by the reflection principle. More broadly, if $f$ is negative near $0$ and $f(t) = o(t^{1/2-\epsilon})$ for some $\epsilon>0$ then $p(t) = t^{-1/2 + o(1)}$. The situation changes for $f(t) = -1 + \lambda t^{1/2}$ with $\lambda \geq 0$. It turns out that $p(t) = t^{-\theta(\lambda) + o(1)}$ for a function $\theta$, which is strictly increasing in $\lambda$. 

It seems interesting to consider the case when $f$ is a trajectory of a stochastic process. A Brownian motion, considered in this paper, is particularly intriguing as it is of order $t^{1/2}$. Formally, we consider the wall $f$ to be sampled as a Brownian curve $W$ independent of $B$ and kept frozen. We prove there exists a function $\gamma$ such that for any $\beta \in \R$ we have
\[
  \lim_{t \to +\infty} \frac{\log \pr{\forall_{s \leq t} B_s \geq \beta W_s | W}}{\log t} = -\gamma(\beta) \quad \text{ a.s. and in } L^1.
\]
One important result is that $\gamma(\beta)>1/2$ for all $\beta \neq 0$. This means that the conditioning has an strong impact on the asymptotic of this probability, i.e.
\[
  \lim_{t \to +\infty} \frac{ \pr{\forall_{s \leq t} B_s \geq \beta W_s | W}}{\ev{}\rbr{\pr{\forall_{s \leq t} B_s \geq \beta W_s | W}}} = 0 \quad \text{a.s.}
\]
The function $\gamma$ is universal. An analogous result holds for the decay of the probability of a random walk staying above the path of another independent random walk. Further, the assumption of independence can be weakened or, even more generally, we can admit random walks in a time-changing random environment.

Finally, we study an example of strongly ergodic diffusions namely a Ornstein-Uhlenbeck process. We obtain that the probability for an Ornstein-Uhlenbeck process to stay above the path of another Ornstein-Uhlenbeck process decays exponentially fast. The decay is strictly faster than the exponential decay of the expectation of this conditional probability. Our results are presented below in separate subsections further in Section \ref{sec:related_works_motivations} we present related results and motivations.

\subsection{Brownian motion over Brownian motion}\label{sec:BMoverBMresults}

\begin{thm}
\label{thm:BMBasic}
Let $B,W$ be two independent standard Brownian motions. There exists a function $\gamma : \R \to \R$ such that for any $\beta \in \R$, $0\leq a < b \leq +\infty$ and $x > 0$,
\[
  \lim_{t \to +\infty} \frac{\log \pr{\forall_{s\leq t}x+B_{s}\geq\beta W_{s}, B_t - \beta W_t \in (at^{1/2},bt^{1/2})|W}}{\log t} = - \gamma(\beta) \quad \text{a.s. and in } L^p, \ p \geq 1.
\]
Moreover, the function $\gamma$ is symmetric, convex and for any $\beta \neq 0$,
\begin{equation}
  \gamma(\beta)>\gamma(0)=1/2.\label{eq:relevance}
\end{equation}
Consequently, $\gamma$ is strictly increasing and $\lim_{\beta \to +\infty} \gamma(\beta) = +\infty$.
\end{thm}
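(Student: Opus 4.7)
I would split the proof into three ingredients: existence of the exponent $\gamma(\beta)$ as an a.s.\ limit, its elementary shape properties (symmetry, convexity, $\gamma(0)=1/2$), and the strict relevance inequality $\gamma(\beta)>1/2$ for $\beta\ne 0$.

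\emph{Existence of the limit.} Set $P_t(W):=\pr{\forall_{s\le t}\ x+B_s\ge\beta W_s,\ B_t-\beta W_t\in(a\sqrt t, b\sqrt t)\ |\ W}$. The plan is to obtain convergence of $-\log P_t(W)/\log t$ via a subadditive structure in dyadic log-time. Applying the Markov property of $B$ at an intermediate time $s$, together with the fact that $\theta_sW:=(W_{s+u}-W_s)_{u\ge 0}$ is an independent Brownian motion of the same law as $W$, yields a multiplicative inequality
\[
P_{s+t}(W) \ge c\cdot \widetilde P_s(W)\cdot \inf_{y\in J_s}\widetilde P_t(\theta_sW; y),
\]
where $\widetilde P_s$ forces the endpoint $B_s-\beta W_s$ into a Brownian-scale window $J_s$ and $\widetilde P_t(\cdot;y)$ is the restart with initial offset $y$. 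Iterating along $t=2^n$, taking logarithms and applying Kingman's subadditive ergodic theorem --- the shift $\theta$ is mixing on Wiener space --- produces an a.s.\ limit $\gamma(\beta)\in[0,+\infty]$. The annealed bound $\E P_t(W)\asymp t^{-1/2}$ (reflection principle for $B-\beta W$, a Brownian motion of variance $1+\beta^2$) combined with Fatou forces $\gamma(\beta)\ge 1/2$, hence finite, and a matching lower bound on $P_t(W)$ via an explicit tube event (using the a.s.\ modulus of continuity of $W$) gives $\gamma(\beta)<+\infty$. Time-interpolation then gives convergence for continuous $t$, and varying $x,a,b$ alters $P_t(W)$ only by factors bounded in $t$ (controlled by Gaussian endpoint densities and bounded-time hitting estimates), negligible after division by $\log t$.

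\emph{Shape of $\gamma$.} The base case $\gamma(0)=1/2$ is exactly \eqref{eq:halfDerivation}; symmetry $\gamma(\beta)=\gamma(-\beta)$ is immediate from $(B,-W)\stackrel{d}{=}(B,W)$. Convexity of $\gamma$ is obtained from Prékopa--Leindler: the set $K(\beta):=\{B:B_s+x\ge\beta W_s\ \forall s\le t,\ B_t-\beta W_t\in(a\sqrt t, b\sqrt t)\}$ satisfies $\tfrac12(K(\beta_1)+K(\beta_2))\subseteq K(\tfrac{\beta_1+\beta_2}2)$ because the constraints are affine in $\beta$, so log-concavity of the Wiener measure makes $\beta\mapsto \log P_t(W,\beta)$ concave for every $W$ and $t$; concavity descends to $\gamma$ in the limit. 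Continuity on $\R$ is automatic for convex functions; strict monotonicity on $[0,\infty)$ and the divergence $\gamma(\beta)\to +\infty$ follow from convexity, symmetry and the inequality $\gamma(\beta)>\gamma(0)$ (since convex symmetric functions strictly above their minimum at one point are strictly increasing away from the minimum and unbounded at infinity).

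\emph{Strict relevance $\gamma(\beta)>1/2$ for $\beta\ne 0$, and $L^p$ convergence.} This is the main conceptual obstacle. The plan is a second-moment calculation: $\E P_t(W)^2$ equals the persistence probability of two independent Brownian motions $B^{(1)},B^{(2)}$ above the common random wall $\beta W$. The pair $(B^{(1)}-\beta W, B^{(2)}-\beta W)$ is a 2D centred Gaussian motion of correlation $\rho=\beta^2/(1+\beta^2)\in(0,1)$, so by the classical wedge-persistence formula $\E P_t(W)^2\asymp t^{-\theta(\rho)}$ with $\theta(\rho)=\pi/(\pi+2\arcsin\rho)<1$. Consequently the normalised second moment $\E P_t^2/(\E P_t)^2\asymp t^{1-\theta(\rho)}$ diverges polynomially --- the signature of disorder relevance. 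Converting this variance gap into a matching drop $\E\log P_t\le -(\tfrac12+\delta)\log t$ with $\delta=\delta(\beta)>0$ requires a Paley--Zygmund / log-concentration step, and this is the technically delicate part of the argument; the same moment control on $-\log P_t$ is what upgrades the a.s.\ convergence to $L^p$ by supplying uniform integrability of $((-\log P_t)/\log t)^p$.
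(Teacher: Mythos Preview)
Your shape section (symmetry, $\gamma(0)=1/2$, convexity via Pr\'ekopa--Leindler) matches the paper's argument and is fine. The other two parts have problems.

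\textbf{Existence.} Your subadditive scheme uses the additive increment shift $\theta_sW=(W_{s+u}-W_s)_{u\ge 0}$ together with dyadic times $t=2^n$. But along $t=2^n$ the relevant additivity is in $\log t$, and the shift that makes the blocks stationary is the \emph{scaling} shift $(W_{cu}/\sqrt c)_{u\ge 0}$, not the increment shift: splitting $[0,2^{m+n}]$ at $2^m$ leaves an interval of length $2^m(2^n-1)$, not $2^n$, so your $\widetilde P_t(\theta_sW;y)$ does not have the law of $\widetilde P_t(W;y)$. Once you pass to the scaling shift you are exactly in the Ornstein--Uhlenbeck picture, which is what the paper does: it converts $B,W$ to OU processes via \eqref{eq:OUandBMRelation}, decomposes the wall $Y$ into i.i.d.\ excursion blocks $(Y^i)$ between successive returns to $0$ after hitting $\pm1$, and runs Kingman on the array $q_{m,n}=-\log\inf_x\mathbb P(\forall_{u\in[\rho_m,\rho_n]}X_u\ge Y_u,\,X_{\rho_n}\in(a,b)\mid Y,X_{\rho_m}=x)$. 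Your scheme can be repaired along these lines, but as written the stationarity claim is false.

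\textbf{Relevance $\gamma(\beta)>1/2$.} This is a genuine gap. Your second-moment computation is correct: the two-replica process has correlation $\rho=\beta^2/(1+\beta^2)$ and the wedge exponent gives $\mathbb E[P_t^2]\asymp t^{-\theta(\rho)}$ with $\theta(\rho)<1$, so $\mathbb E[P_t^2]/(\mathbb E[P_t])^2\to\infty$. But this does \emph{not} yield $\mathbb E[\log P_t]\le -(1/2+\delta)\log t$ by any Paley--Zygmund or concentration step. Paley--Zygmund only gives $\mathbb P(P_t\ge c\,t^{-1/2})\to 0$, which is perfectly compatible with $\gamma=1/2$: e.g.\ $P_t=t^{-1/2}(\log t)^{-Z}$ for a nondegenerate $Z>0$ already has $-\log P_t/\log t\to 1/2$ while $P_t/\mathbb E P_t\to 0$. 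Second-moment blow-up is a necessary symptom of disorder relevance, not a proof of it, and there is no soft way to close this. The paper's proof of \eqref{eq:mainAim} is completely different and is the main technical content: working in the OU picture, it writes $\mathbb E\log\|T_n\|-\mathbb E\log\mathbb E(\|T_n\|\mid\mathcal F)=\sum_{k}\mathbb E G_{n,k}$ where each $G_{n,k}\le 0$ is the Jensen gap on the $k$th excursion, and then shows a \emph{uniform} bound $\mathbb E G_{n,k}\le -c<0$ (Proposition~\ref{prop:mainPropositon}). The uniformity requires controlling the conditional law of $X_{\rho_k}$ (and of $X_{\rho_{k+1}}$ given the future) so that it does not escape to infinity under the survival conditioning; this is done via Lemmas~\ref{lem:uniformTightness}--\ref{lem:boundedOU} and a symmetrisation of the excursion $|Y|\mapsto\pm|Y|$. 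None of this is captured by a moment ratio.
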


\begin{rem}
We conjecture that $\beta\mapsto\gamma(\beta)$ is strictly convex and grows at quadratic rate.
\end{rem}

\begin{rem}
	Adhering to the parlance of disordered systems one can interpret (\ref{eq:relevance}) as the relevance of the disorder. Namely, for any $\beta>0$ we have
\[
\lim_{t\to+\infty}\frac{\E\sbr{-\log\pr{\forall_{s\leq t}x+B_{s}\geq\beta W_{s}|W}}}{\log t}>\lim_{t\to+\infty}\frac{-\log\pr{\forall_{s\leq t}x+B_{s}\geq\beta W_{s}}}{\log t} = \frac{1}{2}.
\]
\end{rem}

We contrast this result with the case a random wall with fast decay of correlations. For simplicity we choose an i.i.d. sequence but the result is still valid for other processes such as Ornstein-Uhlenbeck. In this case, the disorder is not relevant, the wall has no impact on the asymptotic behavior of the probability.

\begin{fact}
\label{fact:IIDwall}Let $\cbr{X_{i}}_{i\in\N}$ be an i.i.d. sequence of random variables such that $\E X_{i}=0$ and $\E X_{i}^{2}<+\infty$ and $B$ an independent Brownian motion. We have
\[
  \lim_{N\to+\infty}\frac{\log\pr{\forall_{n\in\cbr{1,\ldots,N}}x+B_{n}\geq X_{n}|\cbr{X_{i}}_{i\in\N}}}{\log N} = - \frac{1}{2}\quad\text{a.s.}
\]
\end{fact}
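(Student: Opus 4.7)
The plan is to split this into the upper bound $\limsup_{N \to \infty} (\log p_N(X))/\log N \leq -1/2$ a.s.\ and the matching lower bound, where $p_N(X) := \pr{\forall n \leq N,\ x + B_n \geq X_n \mid X}$. Throughout, write $F$ for the CDF of $X_1$, $\bar F = 1-F$, and $\sigma^2 = \E X_1^2 < \infty$. By independence of $B$ and $X$, conditioning on $B$ gives
\[
\E p_N(X) = \E_B \prod_{n=1}^N F(B_n + x).
\]

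For the upper bound I would first establish the unconditional estimate $\E p_N(X) \leq C_x/\sqrt{N}$. Using monotonicity of $F$, $\prod_{n \leq N} F(B_n + x) \leq F\bigl(\min_{n \leq N} B_n + x\bigr)$; Chebyshev's inequality yields $F(-\ell + x) \leq \sigma^2/(\ell - x)^2$ for $\ell > x+1$, and combining with the standard estimate $\pr{-\min_{n \leq N} B_n \leq \ell} \leq C \ell /\sqrt{N}$ produces the claim after integrating in $\ell$. Markov's inequality then gives $\pr{p_N \geq N^{-1/2+\epsilon}} \leq C N^{-\epsilon}$, which is summable along $N_k = 2^k$; Borel--Cantelli together with the monotonicity $N \mapsto p_N(X)$ yields $p_N \leq N^{-1/2 + \epsilon}$ eventually a.s., for every $\epsilon > 0$.

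For the lower bound, the plan is to first prove $\E p_N(X) \geq N^{-1/2 - o(1)}$. Fixing small $c > 0$, I would restrict to $G_c := \{B_n \geq c \sqrt{n},\ \forall n \leq N\}$; via the Lamperti transform $B_{e^u}/\sqrt{e^u}$, the probability $\pr{G_c}$ coincides with the persistence of an Ornstein--Uhlenbeck process above the level $c$ over time $\log N$, which is $N^{-\alpha(c)+o(1)}$ with $\alpha(c) \to 1/2$ as $c \to 0$. On $G_c$,
\[
\prod_{n=1}^N F(B_n + x) \geq \exp\Bigl(-\sum_{n=1}^N \bar F(c \sqrt{n} + x)\Bigr),
\]
and the finite-second-moment assumption implies $t^2 \bar F(t) \to 0$ as $t \to \infty$, so $\sum_{n \leq N} \bar F(c \sqrt{n}) = o(\log N)$ and the product is at least $N^{-o(1)}$. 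Letting $c \to 0$ yields $\E p_N \geq N^{-1/2 - o(1)}$. Running the same argument with $\min(B_n, B'_n)$ for an independent copy $B'$ of $B$ gives the second-moment estimate $\E p_N(X)^2 \leq C \log N /N$, and Paley--Zygmund produces $\pr{p_N(X) \geq N^{-1/2 - o(1)}} \geq N^{-o(1)}$.

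The main obstacle, and the step I expect to take most care, is promoting this positive-probability lower bound to an almost-sure one. The natural strategy is to argue that $\{\liminf_N (\log p_N(X))/\log N \geq -1/2\}$ is a tail event for the i.i.d.\ sequence $(X_n)$: modifying finitely many coordinates alters $p_N(X)$ only by a multiplicative factor which, after rescaling by $\sqrt{N}$, converges to a positive finite constant depending only on the modified coordinates, so its logarithm is $O(1) = o(\log N)$ and leaves the liminf unchanged. Kolmogorov's 0--1 law then forces the tail event to have probability 0 or 1, and the positivity coming from Paley--Zygmund rules out 0. Verifying the ``$\sqrt{N}\,p_N \to $ finite limit'' asymptotics rigorously, enough to invoke the 0--1 law, is where the genuine work lies.
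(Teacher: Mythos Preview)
Your approach is correct but considerably more elaborate than the paper's. The paper works directly on the realisation of $X$: Borel--Cantelli gives an a.s.\ eventual bound $|X_n|\le n^{1/(2+\epsilon)}$ (the proof in the paper tacitly uses a $(2+\epsilon)$-th moment here; under only a second moment one instead gets $|X_n|\le \epsilon\sqrt{n}$ eventually for each fixed $\epsilon>0$, which serves the same purpose after letting $\epsilon\to 0$). This sandwiches $p_N(X)$ between the persistence probabilities of $x+B$ above the deterministic barriers $\pm n^{1/(2+\epsilon)}$, modulo a random but finite initial stretch, and both of these are $N^{-1/2+o(1)}$ by the classical sub-diffusive-barrier estimate. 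No annealed moment computations, no Paley--Zygmund, no zero--one law.

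Your route --- annealed first and second moments, Paley--Zygmund, then Kolmogorov's zero--one law --- also works and is a reasonable template when pathwise control of the wall is unavailable. But the step you single out as ``where the genuine work lies'' is much easier than you suggest: you do not need anything like $\sqrt{N}\,p_N(X)\to c$. To see that $\liminf_N \log p_N/\log N$ is a tail variable it suffices that modifying one coordinate, say $X_1\mapsto X_1'$, changes $\log p_N$ by $O(1)$ uniformly in $N$. Write $q_N:=\pr{\forall_{2\le n\le N}\; x+B_n\ge X_n}$. Then trivially $p_N(X)\le q_N$, while the FKG inequality for Brownian motion (Corollary~\ref{cor:conditionalFKG}) gives $p_N(X)\ge \pr{B_1\ge X_1-x}\,q_N$. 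Hence
\[
\pr{B_1\ge X_1'-x}\ \le\ \frac{p_N(X')}{p_N(X)}\ \le\ \frac{1}{\pr{B_1\ge X_1-x}},
\]
which is the required bound, and the zero--one law applies immediately.
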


The result of Theorem \ref{thm:BMBasic} is stable under perturbing the starting condition and the wall.
\begin{thm}
\label{thm:BMExtended}
Let $B,W$ be two independent Brownian motions, $f : \R_+ \to \R$ and $g : \R_+ \to \R_+$ functions such that there exists $\epsilon >0$ verifying
\[
  f(0) = 0, \ \  \lim_{t \to +\infty} \frac{|f(t)|}{t^{1/2-\epsilon}} = 0, \ \ \inf_{t \geq 0} g(t)> 0 \ \text{ and } \ \lim_{t \to +\infty} \frac{\log g(t)}{\log t} = 0. 
\]
For any $\beta \in \R$ and $0 \leq a < b \leq +\infty$ we have 
\begin{multline*}
	\lim_{t\to+\infty}\frac{\log\pr{\forall_{s\leq t}g(t)+B_{s}\geq\beta W_{s}+f(s), B_t - \beta W_t \in (a t^{1/2}, bt^{1/2})|W}}{\log t}\\ =- \gamma(\beta),\quad\text{a.s. and }L^{p},\ p \geq 1.
\end{multline*}
\end{thm}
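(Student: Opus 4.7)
My plan is to derive Theorem~\ref{thm:BMExtended} from Theorem~\ref{thm:BMBasic} via a two-step sandwich, handling the slowly-growing starting height $g$ and the subcritical drift $f$ independently.

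First, to absorb $g$: the hypotheses $\inf g>0$ and $\log g(t)/\log t\to 0$ yield, for each fixed $\delta\in(0,\epsilon)$, the sandwich $x_{0}\leq g(t)\leq t^{\delta}$ for $t$ large enough, where $x_{0}\eqdef\inf g$. By monotonicity in the starting height, the conditional probability is bracketed between its counterparts with starting heights $x_{0}$ and $t^{\delta}$. The upper bracket (starting height $t^{\delta}$) is transformed by the Brownian scaling $s=t^{2\delta}u$, $B_{s}=t^{\delta}B'_{u}$, $W_{s}=t^{\delta}W'_{u}$ into an extended-type probability on $[0,T]$ with $T=t^{1-2\delta}$, starting height $1$, and rescaled drift $\tilde f(u)=t^{-\delta}f(t^{2\delta}u)$ still satisfying the same subcritical bound. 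Once the constant-starting-height case is established, taking $\delta\downarrow 0$ after $t\to\infty$ recovers $-\gamma(\beta)$ at the upper bound; the lower bound (starting height $x_{0}$) is directly an instance of that case.

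Second, with fixed starting height $x>0$, I absorb $f$ via a time-split. Pick $t_{*}=t^{1-\eta}$ for small $\eta\in(0,2\epsilon)$, so that on the main window $[t_{*},t]$ the Brownian fluctuation scale $t^{1/2}$ dominates $|f(s)|\leq c\,t^{1/2-\epsilon}$ by a polynomial factor. For the lower bound, I construct a sub-event forcing $B_{t_{*}}$ to reach a height of order $t^{1/2-\epsilon/2}$ on the initial window $[0,t_{*}]$ — a constant-probability Brownian event that simultaneously satisfies the milder $f$-constraint on $[0,t_{*}]$ — large enough to dominate the $f$-perturbation on $[t_{*},t]$; the main window then reduces to a basic event to which Theorem~\ref{thm:BMBasic} applies after a further Brownian rescaling. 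For the upper bound, I decompose on the value of $B_{t_{*}}-\beta W_{t_{*}}$: Gaussian tail estimates confine it to $O(t^{1/2-\epsilon/2})$ up to a log-negligible factor, and on this ``good'' set the $f$-drift on $[t_{*},t]$ is absorbed into a log-negligible shift of the starting level and of the endpoint window $(at^{1/2},bt^{1/2})$, after which Theorem~\ref{thm:BMBasic} delivers the upper bound with exponent $-\gamma(\beta)$.

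The main obstacle is the bootstrap in the second step: after the boost, the effective starting height at $t_{*}$ is a random quantity depending on $(B_{t_{*}},W_{t_{*}})$, so Theorem~\ref{thm:BMBasic} must be applied conditionally on both $W$ and this boost, and the flexibility in the endpoint window $(a,b)$ — valid for any $0\leq a<b\leq+\infty$ — is essential for re-absorbing the polynomial slippage created by $f$ near the endpoint. The a.s.\ and $L^{p}$ convergence transfer from Theorem~\ref{thm:BMBasic} through the sandwich, since the bracketing bounds differ by at most a subpolynomial factor after Brownian scaling.
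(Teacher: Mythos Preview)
Your two-step sandwich has a genuine gap in the second step (absorbing the drift $f$), and it is not a matter of detail. With the split at $t_{*}=t^{1-\eta}$, the lower bound you propose factorises as
\[
p_{t}\;\geq\;\pr{\text{constraint on }[0,t_{*}],\ \text{gap at }t_{*}\text{ large}\,\big|\,W}\times\inf\pr{\text{constraint on }[t_{*},t],\ \text{endpoint}\,\big|\,W}.
\]
The second factor, after shifting and applying Theorem~\ref{thm:BMBasic}, is of order $(t-t_{*})^{-\gamma+o(1)}\approx t^{-\gamma}$. But the first factor still contains the full constraint on $[0,t_{*}]$ with the drift $f$ present, and that constraint alone costs $t_{*}^{-\gamma+o(1)}=t^{-(1-\eta)\gamma+o(1)}$, which is \emph{not} log-negligible. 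Multiplying, you obtain only $p_{t}\gtrsim t^{-(2-\eta)\gamma+o(1)}$, which is too weak. The phrase ``a constant-probability Brownian event that simultaneously satisfies the milder $f$-constraint on $[0,t_{*}]$'' is where the argument breaks: satisfying the constraint conditionally on $W$ is never a constant-probability event on a window of polynomial length.

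You cannot fix this by taking $t_{*}=t^{o(1)}$ instead. Then the initial-window cost does become log-negligible, but the boost can only bring the gap to height $O(t_{*}^{1/2})=t^{o(1)}$, which is far too small to dominate $|f(s)|\sim t^{1/2-\epsilon}$ on the main window $[t_{*},t]$. There is no choice of $t_{*}$ that makes both requirements hold simultaneously; this is the structural obstruction to a pure time-split approach.

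The paper's proof avoids the split for $f$ altogether. It writes $|f(s)|\leq x/2+Aj(s)$ with $j(s)=\min(s,s^{1/2-\epsilon})$, and observes that $\int_{0}^{\infty}[j'(s)]^{2}\,\dd s<\infty$. Hence the Girsanov density
\[
Z_{\infty}=\exp\Bigl(\int_{0}^{\infty}Aj'(s)\,\dd W_{s}-\tfrac{1}{2}\int_{0}^{\infty}[Aj'(s)]^{2}\,\dd s\Bigr)
\]
defines an equivalent measure $\mathbb{Q}$ on the \emph{entire} path space under which $W_{s}-Aj(s)$ is again a Brownian motion. The event whose probability we study is then exactly an instance of Theorem~\ref{thm:BMBasic} under $\mathbb{Q}$, so the a.s.\ limit holds $\mathbb{Q}$-a.s., hence $\mathbb{P}$-a.s.\ since $Z_{\infty}>0$. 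The varying starting height $g(t)$ is then handled by a split at $h(t)=e^{o(\log t)}$ combined with the strong FKG inequality (Fact~\ref{fact:StrongFKG}) --- and here the split works precisely because no drift remains to be absorbed. This Girsanov step is the missing idea in your proposal.
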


\subsection{Ornstein-Uhlenbeck process over Ornstein-Uhlenbeck process}

We recall that an Ornstein-Uhlenbeck process $\cbr{X_{t}}_{t\geq0}$ with parameters $(\mu, \sigma)$ is a diffusion fulfilling the stochastic differential equation 
\[
\dd X_{t}=\sigma\dd W_{t}-\mu X_{t}\dd t.
\]

The main result of this section is following. In Remark \ref{rem:BMtoOU} we will explain how this result extends the one of Theorem \ref{thm:BMBasic}.
\begin{thm}
\label{thm:OUresult}Let $X,Y$ be two independent Ornstein-Uhlenbeck processes with the parameters $(\mu_1,1)$ and $(\mu_2,1)$ respectively, where $\mu_{1},\mu_{2}>0$. There exists two functions $\gamma_{\mu_1,\mu_2}$ and $\delta_{\mu_1,\mu_2}$ such that for any $\beta \in \R$, $0 \leq a < b \leq +\infty$, if $X_0>\beta Y_0$ then
\begin{align}
  &\lim_{t\to+\infty}\frac{\log\mathbb{P}\rbr{\forall_{s\leq t}X_{s}\geq\beta Y_{s}, X_t - \beta Y_t \in (a,b)|Y}}{t} = -\gamma_{\mu_{1},\mu_{2}}(\beta) \text{ a.s. and in }L^{p}, \,\, p\geq1,\label{eq:gammaDef}\\
  &\lim_{t\to+\infty}\frac{\log\mathbb{P}\rbr{\forall_{s\leq t}X_{s}\geq\beta Y_{s},X_t - \beta Y_t \in (a,b)}}{t}= -\delta_{\mu_{1},\mu_{2}}(\beta).\label{eq:deltaDef}
\end{align}
The functions $\gamma_{\mu_{1},\mu_{2}}, \delta_{\mu_{1},\mu_{2}}$  are symmetric convex and for any $\beta \neq 0$ we have
\begin{equation}
\gamma_{\mu_{1},\mu_{2}}(\beta)>\delta_{\mu_{1},\mu_{2}}(\beta)>0.\label{eq:mainAim}
\end{equation}
\end{thm}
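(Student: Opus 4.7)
The plan is to follow the same skeleton as for the Brownian analogue: first produce the two rates via a spectral argument (for $\delta$) and a subadditive ergodic argument (for $\gamma$), and then upgrade the trivial bound $\gamma\geq\delta$ to strict inequality by reducing, whenever possible, to Theorem~\ref{thm:BMBasic} through the time-change identity (\ref{eq:OUandBMRelation}).

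For the annealed exponent, I would exploit that $(X_t,Y_t)$ is a 2D Gaussian Markov process whose semigroup on the half-plane $H_\beta=\cbr{(x,y):x>\beta y}$, killed on $\partial H_\beta$, has compact resolvent on the Gaussian-weighted space $L^2(\R^2,\nu_\infty)$. Krein--Rutman then produces a simple principal eigenvalue $\lambda_1(\beta)>0$ with strictly positive eigenfunction $\phi_\beta$, and the associated spectral expansion yields
\[
  \pr{\forall s\leq t,\,X_s\geq\beta Y_s,\,X_t-\beta Y_t\in(a,b)\mid X_0=x,Y_0=y}\ceq \phi_\beta(x,y)e^{-\lambda_1(\beta)t},
\]
which identifies $\delta_{\mu_1,\mu_2}(\beta)=\lambda_1(\beta)$ and in particular shows $\delta>0$. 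For the quenched exponent I would apply the strong Markov property of $X$ at integer times, together with a pigeonhole over a fine discretisation of $X_n$, in order to turn
\[
  \Psi_n(Y)\eqdef-\log\pr{\forall s\leq n,\,X_s\geq\beta Y_s,\,X_n-\beta Y_n\in(a,b)\mid Y}
\]
into an almost-subadditive cocycle $\Psi_{n+m}(Y)\leq\Psi_n(Y)+\Psi_m(\theta_nY)+o(n)$ under the ergodic shift $\theta_n$ on the stationary environment $Y$. Kingman's subadditive ergodic theorem then produces the limit $\Psi_n(Y)/n\to\gamma_{\mu_1,\mu_2}(\beta)$ almost surely and in $L^p$.

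Jensen's inequality applied to $-\log\pr{\cdot\mid Y}$ immediately gives $\gamma\geq\delta$; the substance of the theorem is strictness. In the balanced case $\mu_1=\mu_2=\mu$ I would apply (\ref{eq:OUandBMRelation}) to both $X$ and $Y$: the event $\cbr{X_s\geq\beta Y_s,\,s\leq t}$ becomes $\cbr{W^{(1)}_u-\beta W^{(2)}_u\geq c_0,\,u\leq e^{2\mu t}-1}$, and the endpoint condition $X_t-\beta Y_t\in(a,b)$ becomes, up to a harmless shift, a condition of the form $W^{(1)}_T-\beta W^{(2)}_T\in(a'T^{1/2},b'T^{1/2})$ with $T=e^{2\mu t}-1$. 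Theorem~\ref{thm:BMBasic} then directly delivers $\gamma_{\mu,\mu}(\beta)=2\mu\gamma(\beta)>\mu=\delta_{\mu,\mu}(\beta)$ for every $\beta\neq0$. In the unbalanced case $\mu_1\neq\mu_2$ the two time-changes live on incompatible exponential scales, so the reduction fails; instead I would establish a direct variance bound $\Var{\log\pr{\cdot\mid Y}}\gtrsim t$ by exhibiting asymptotically independent contributions across well-separated dyadic blocks of $Y$, and combine it with a quantitative form of Jensen's inequality to extract the gap.

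Symmetry $\gamma(-\beta)=\gamma(\beta)$ (and the same for $\delta$) follows from the law-preserving involution $Y\leftrightarrow-Y$. Convexity of $\delta$ is read off the Rayleigh--Ritz variational formula after conjugation by the shear $(x,y)\mapsto(x-\beta y,y)$ that makes the Dirichlet domain $\beta$-independent, and convexity of $\gamma$ follows from an analogous variational argument applied pathwise in $Y$ before passing to the subadditive limit. The main obstacle, as indicated above, is the unbalanced regime $\mu_1\neq\mu_2$: without a direct reduction to Theorem~\ref{thm:BMBasic}, the variance lower bound underlying $\gamma>\delta$ must be obtained from scratch via a careful block decomposition of the environment $Y$, and the bookkeeping of the distinct exponential time scales $e^{2\mu_1 t}$ and $e^{2\mu_2 t}$ is the most delicate part of the argument.
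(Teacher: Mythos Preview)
Your spectral identification of $\delta$ and your subadditive route to $\gamma$ are in the right spirit, but they diverge from the paper in two respects worth noting. First, the paper does not run Kingman along integer times with the stationary shift $\theta_n$; it instead introduces the random stopping times $\rho_i$ (the successive returns of $Y$ to $0$ after an excursion of size $1$) and applies Kingman to the family $q_{m,n}=-\log\inf_{x\in(a,b)}\pp(\forall_{u\in[\rho_m,\rho_n]}X_u\geq Y_u,\,X_{\rho_n}\in(a,b)\mid Y,X_{\rho_m}=x)$. The point of anchoring at $Y_{\rho_i}=0$ is that subadditivity becomes exact and the i.i.d.\ structure of the excursion pieces $(Y^i,r_i)$ gives ergodicity for free; your integer-time version needs the extra ``pigeonhole plus $o(n)$'' correction, which is workable but less clean. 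Second, for convexity the paper does not use a Rayleigh--Ritz variational formula: it applies the Pr\'ekopa--Leindler inequality (Lemma~\ref{lem:covexTail}) directly to $\lambda\mapsto-\log\pp(\forall_s X_s\geq\lambda h_1(s)+(1-\lambda)h_2(s))$, pathwise in $Y$, and then passes to the limit.

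The real gap in your proposal is the proof of $\gamma>\delta$. Your plan for $\mu_1=\mu_2$ is to invoke Theorem~\ref{thm:BMBasic}, but in this paper Theorem~\ref{thm:BMBasic} is \emph{derived from} Theorem~\ref{thm:OUresult} (see Remark~\ref{rem:BMtoOU} and the opening of Section~\ref{sec:BMProofs}), so this is circular. More importantly, the paper does \emph{not} split into balanced and unbalanced cases at all: it gives a single direct argument (Section~\ref{sec:relevantDisorder}, Proposition~\ref{prop:mainPropositon}) valid for every $\mu_1,\mu_2$. The mechanism is not a variance lower bound across dyadic blocks; it is a pointwise Jensen gap built on the excursion decomposition. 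One introduces the measure-valued iterates $T_n=\pp(\forall_{s\leq\rho_n}X_s\geq Y_s,\,X_{\rho_n}\in\dd x\mid Y)$, telescopes $\E\log\|T_n\|-\E\log\E(\|T_n\|\mid\mathcal{F})$ into a sum of one-step gaps $\E G_{n,k}$, and bounds each $G_{n,k}$ below a strictly negative constant by exploiting the sign-flip symmetry of the excursion on $[\tau_k,\rho_{k+1}]$: replacing $Y$ by $\tilde Y^k(\pm1)$ with $\tilde Y^k_s(\eta)=\eta|Y_s|$ on that interval preserves the law but forces a definite gap in $\log\pp(\cdot\mid Y^k)$, provided one first shows that the conditioned law of $X_{\rho_k}$ and $X_{\rho_{k+1}}$ stays uniformly concentrated in a compact box (Lemmas~\ref{lem:uniformTightness}--\ref{lem:boundedOU}). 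This last uniform concentration step is the technical heart of the paper and has no counterpart in your sketch; your proposed ``variance $\gtrsim t$ plus quantitative Jensen'' in the unbalanced case would require an argument of comparable depth, and as stated it is not a proof.
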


Observe that the assumption $\sigma_{1}=\sigma_{2}=1$ is non-restrictive. Given an Ornstein-Uhlenbeck process $X$ with parameters $(\mu_1,1)$ the process $\sigma X$ has parameters $(\mu_1,\sigma)$. Setting
\[
  g(\beta;\sigma_{1},\sigma_{2},\mu_{1},\mu_{2}) \eqdef \lim_{t \to +\infty} \frac{-\log\mathbb{P}\rbr{\forall_{s\leq t}X_{s}\geq\beta Y_{s}, X_t - \beta Y_t \in (a,b)|Y}}{t}
\]
where $X,Y$ are two independent Ornstein-Uhlenbeck processes with parameters $(\mu_1,\sigma_1^2)$ and $(\mu_2,\sigma_2^2)$ respectively, we have
\[
  g(\beta;\sigma_{1},\sigma_{2},\mu_{1},\mu_{2})=\gamma_{\mu_{1},\mu_{2}}\rbr{\beta\frac{\sigma_{2}}{\sigma_{1}}}.
\]
Furthermore, using the scaling property of the Brownian motion we obtain
\[
\gamma_{\mu_{1},\mu_{2}}=\mu_{2}\gamma_{\mu_{1}/\mu_{2},1}.
\]
The same relations hold for $\delta_{\mu_{1},\mu_{2}}$.

\begin{rem}\label{rem:BMtoOU}
	 It is well-known that if $W$ is a standard Wiener process then 
	\begin{equation*}
	X_{t}\eqdef xe^{-\mu t}+\frac{\sigma}{\sqrt{2\mu}}e^{-\mu t}W_{e^{2\mu t}-1}\label{eq:OUandBMRelation}
	\end{equation*}
	is an Ornstein-Uhlenbeck process with parameters $(\mu, \sigma)$ starting from $X_{0}=x$. Using this relation one can see that Theorem \ref{thm:OUresult} with $\mu=\mu_{1}=\mu_{2}$ is equivalent to Theorem \ref{thm:BMBasic}. One also checks that
\[
\gamma_{\mu,\mu}(\beta)=2\mu\gamma(\beta),\quad\delta_{\mu,\mu}(\beta)=\mu.
\]
We stress however that the case of different $\mu$'s cannot be expressed in the terms of Theorem \ref{thm:BMBasic}. Moreover, we suspect that $\max(\mu_{1},\mu_{2})>\delta_{\mu_{1},\mu_{2}}(\beta)>\min(\mu_{1},\mu_{2})$.
\end{rem}

\subsection{Random walk in random environment}
\label{sub:Dependent-version-of}

Results analogous to Theorem \ref{thm:BMBasic} hold for random walks. Let $\cbr{B_{n}}_{n\in\N},\cbr{W_{n}}_{n\in\N}$ be two independent random walks. There may exist $n$ such that $\pr{x+B_n \geq W_n}=0$. To resolve this issue we introduce
\begin{equation}
  \label{eq:canStay}
  \mathcal{A}_{x}\eqdef\bigcap_{N\geq0}\cbr{\pr{\forall_{n\leq N}x+B_{n}\geq W_{n}|W}>0}.
\end{equation}
We briefly study these events.
\begin{fact}
\label{fact:nonInfinity}
For any $x\leq x'$ we have $\mathcal{A}_{x}\subset\mathcal{A}_{x'}$ and $\lim_{x\to+\infty}\pr{\mathcal{A}_{x}}=1$. Moreover, the following conditions are equivalent:
\begin{itemize}
  \item For any $x > 0$, $\pr{\mathcal{A}_{x}}=1$.
  \item $\sup S_{B}\geq\sup S_{W}$, where $S_{B},S_{W}$ are respectively the supports of the measures describing $B_{1}$ and $W_{1}$ (we allow both the sides to be infinite).
\end{itemize}
\end{fact}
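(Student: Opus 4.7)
My plan is to address the three assertions separately, writing $\mathcal{A}_x = \bigcap_{N \geq 0} \{p_N(x,W) > 0\}$ with $p_N(x,W) := \pr{\forall_{n \leq N} x + B_n \geq W_n | W}$. The monotonicity is immediate: for $x \leq x'$ the pathwise inclusion $\{\forall_{n \leq N} x + B_n \geq W_n\} \subseteq \{\forall_{n \leq N} x' + B_n \geq W_n\}$ makes $p_N(\cdot,W)$ nondecreasing in $x$, whence $\mathcal{A}_x \subseteq \mathcal{A}_{x'}$. For $\lim_{x \to +\infty}\pr{\mathcal{A}_x}=1$, set $\alpha := \sup S_B$ (nondegeneracy of $B$ gives $\alpha>0$), fix $\delta \in (0,\alpha)$, and let
\[
  M(W) := \sup_{n \geq 1} \rbr{W_n - n(\alpha - \delta)}_+.
\]
The strong law of large numbers gives $W_n/n \to 0$ a.s., so $W_n - n(\alpha - \delta) \to -\infty$ and $M(W) < +\infty$ a.s. Whenever $x \geq M(W)$, forcing the $B$-increments $X_i := B_i - B_{i-1}$ to lie in $(\alpha - \delta, \alpha]$ is a positive-probability event ($\pr{X_1 > \alpha - \delta}^N > 0$ by definition of $\alpha$) on which $x + B_n > x + n(\alpha - \delta) \geq W_n$ for all $n \leq N$; hence $W \in \mathcal{A}_x$ on $\{M(W) \leq x\}$, giving $\pr{\mathcal{A}_x} \geq \pr{M(W) \leq x} \to 1$.

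For the equivalence, denote $\alpha_B := \sup S_B$ and $\alpha_W := \sup S_W$. If $\alpha_W > \alpha_B$, fix $x > 0$ and pick $N$ with $N(\alpha_W - \alpha_B) > x$. A neighborhood-of-$\alpha_W$ argument (trivial when $\alpha_W = +\infty$; otherwise the positive-probability event that every $W$-step exceeds $\alpha_W - \eta$ for $\eta$ sufficiently small) shows $\pr{W_N > N\alpha_B + x} > 0$. On that event $B_N \leq N \alpha_B$ a.s.\ forces $x + B_N < W_N$, so $p_N(x,W) = 0$ and hence $\pr{\mathcal{A}_x} < 1$. Conversely, assume $\alpha_B \geq \alpha_W$; when $\alpha_B = +\infty$ the construction of the preceding paragraph with arbitrarily large $\alpha - \delta$ works. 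When $\alpha_B < +\infty$, define the $W$-measurable
\[
  \epsilon_N(W) := \frac{1}{2N}\min_{1 \leq n \leq N}\rbr{x + n\alpha_W - W_n},
\]
which is strictly positive a.s.\ because $x > 0$ and $W_n \leq n\alpha_W$ always. Forcing all $X_i \in (\alpha_B - \epsilon_N(W), \alpha_B]$ is then a conditionally positive-probability event yielding $p_N(x,W) > 0$; intersecting over $N$ gives $\pr{\mathcal{A}_x} = 1$.

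The only genuine subtlety is the boundary case $\alpha_B = \alpha_W < +\infty$, in which the geometric slack $n\alpha_W - W_n$ may vanish with positive probability (precisely when $\alpha_W$ is an atom of the step law of $W$, so $W_n = n\alpha_W$ occurs with positive probability). This is where the hypothesis $x > 0$ is indispensable: the additive room $x$ alone keeps $\epsilon_N(W)$ strictly positive uniformly over $n \leq N$, which is exactly what produces a conditionally positive-probability set of $B$-trajectories dominating $W - x$.
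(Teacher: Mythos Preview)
Your proof is correct and follows essentially the same strategy as the paper's: force the $B$-increments near $\sup S_B$ to dominate $W$, and for the converse force the $W$-increments near $\sup S_W$ to outrun $B$. The only cosmetic differences are that you invoke the SLLN where the paper cites Hsu--Robbins for $\lim_{x\to\infty}\pr{\mathcal{A}_x}=1$, and you use a $W$-dependent threshold $\epsilon_N(W)$ where the paper simply takes the deterministic $x/(2N)$ (note that in fact your $\epsilon_N(W)\geq x/(2N)$, so the paper's choice already suffices); your explicit remark on the boundary case $\alpha_B=\alpha_W$ with an atom at $\alpha_W$ is a nice clarification that the paper omits.
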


Now we present an analogue of Theorem \ref{thm:BMBasic} in the random walk settings.
\begin{thm}
\label{thm:BRWBasic}Let $B,W$ be two independent random walks such that $\E B_{1}=\E W_{1}=0$ and suppose that there exists $b>0$ such that $\E e^{b|B_{1}|}<+\infty$ and $\E e^{b|W_{1}|}<+\infty$. Then for any $x>0$ and $0 \leq a < b \leq +\infty$ we have
\begin{multline*}
	\lim_{N\to+\infty}\frac{-\log\pr{\forall_{n\leq N}x+B_{n}\geq W_{n},B_N - W_N \in (a N^{1/2}, b N^{1/2})|W}}{\log N}\\=\begin{cases}
	\gamma\rbr{\sqrt{\frac{\Var{W_{1}}}{\Var{B_{1}}}}} & \text{ on }\mathcal{A}_{x},\\
	+\infty & \text{ on }\mathcal{A}_{x}^{c}.
	\end{cases},\quad\text{a.s.}
\end{multline*}
\end{thm}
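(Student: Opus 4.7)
The plan is to reduce to the Brownian setting via a strong approximation and then apply Theorem~\ref{thm:BMExtended}. The exponential moment hypothesis is precisely what is required for the Komlós--Major--Tusnády (or Sakhanenko) coupling: on an enlarged probability space I jointly construct two independent standard Brownian motions $\tilde B$ and $\tilde W$, together with realisations of $B$ and $W$ coupled to them, in such a way that
\[
  \sup_{n \leq N} |B_n - \sigma_B \tilde B_n| + \sup_{n \leq N} |W_n - \sigma_W \tilde W_n| = O(\log N)\quad\text{a.s.},
\]
where $\sigma_B^2 = \Var{B_1}$, $\sigma_W^2 = \Var{W_1}$. Since $B$ and $W$ are independent, I run the two couplings independently, so $\tilde B$ and $\tilde W$ remain independent. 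Setting $\beta = \sigma_W/\sigma_B = \sqrt{\Var{W_1}/\Var{B_1}}$ matches the exponent claimed in the theorem.

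The case $\mathcal{A}_x^c$ is immediate: by the very definition of $\mathcal{A}_x$, on its complement the conditional probability $\pr{\forall_{n \leq N} x + B_n \geq W_n \mid W}$ vanishes for every $N$ larger than some a.s.\ finite random $N_0(W)$; the endpoint restriction only shrinks this probability, so $-\log$ of it diverges to $+\infty$, matching the claim.

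On $\mathcal{A}_x$, the KMT estimate sandwiches the random-walk event between two continuous-time Brownian events. More precisely, there is a random constant $C = C(\tilde B, \tilde W)$ such that
\begin{multline*}
  \{\forall s \leq N : \sigma_B \tilde B_s \geq \sigma_W \tilde W_s - x + C\log N \} \\
  \subseteq \{\forall n \leq N : x + B_n \geq W_n\} \\
  \subseteq \{\forall s \leq N : \sigma_B \tilde B_s \geq \sigma_W \tilde W_s - x - C\log N \},
\end{multline*}
where in passing between integer and continuous time I have absorbed the Lévy modulus bound $\sup_{|s-t|\leq 1,\,s,t\leq N} |\tilde B_s - \tilde B_t| \vee |\tilde W_s - \tilde W_t| = O(\sqrt{\log N})$ into the $\log N$ perturbation. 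After dividing by $\sigma_B$, both outer events are exactly of the form handled by Theorem~\ref{thm:BMExtended}, with wall coefficient $\beta$, starting height $g(N) = (x \pm C\log N)/\sigma_B$ and perturbation $f \equiv 0$; the hypotheses are met since $\log N = o(N^{1/2-\epsilon})$ and $\log g(N) = o(\log N)$. The endpoint constraint $B_N - W_N \in (aN^{1/2}, bN^{1/2})$ transfers to $\tilde B_N - \beta \tilde W_N \in (a' N^{1/2}, b' N^{1/2})$ with error $O(\log N) = o(N^{1/2})$, and any such interval yields the same exponent in Theorem~\ref{thm:BMExtended}, so this plays no role in the limit.

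The main obstacle is book-keeping of the conditioning. Theorem~\ref{thm:BMExtended} provides the a.s.\ (and $L^p$) convergence of $-\log \pr{\cdot \mid \tilde W}/\log N$, whereas the statement concerns $\pr{\cdot \mid W}$. The independence $B \perp W$ combined with the independence of the two KMT couplings ensures that the random constant $C(\tilde B, \tilde W)$ is, after suitable conditioning, controlled by a finite function of $\tilde W$ plus an independent residual; equivalently, conditioning on $W$ is the same as conditioning on $\tilde W$ together with auxiliary randomness independent of $\tilde B$. Consequently the sandwich displayed above passes to the conditional level with only a multiplicative constant loss, which is absorbed in the $\log N$ denominator. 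This yields matching upper and lower bounds for $-\log \pr{\cdot \mid W}/\log N$ with the common limit $\gamma(\beta)$ on $\mathcal{A}_x$, completing the proof.
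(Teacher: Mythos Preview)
Your upper bound is essentially right and matches the paper's approach, but the lower bound has a genuine gap. In your sandwich, the inner event reads $\sigma_B\tilde B_s \geq \sigma_W\tilde W_s - x + C\log N$ for all $s\leq N$, which after division by $\sigma_B$ becomes the event that $\tilde B$ started from height $g(N)=(x-C\log N)/\sigma_B$ stays above $\beta\tilde W$. But $g(N)<0$ for large $N$, so this event is empty at $s=0$ and the inclusion gives nothing. Theorem~\ref{thm:BMExtended} requires $\inf_t g(t)>0$; you cannot invoke it here. This is not a cosmetic issue: the KMT error is of order $\log N$, while the initial gap is the fixed constant $x$, so one must first let the walk climb to height of order at least $\log N$ before the coupling becomes useful. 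The paper does exactly this via a two-scale bootstrap: it sets $K_N=\lfloor(\log N)^6\rfloor$, uses a crude a priori bound (Lemma~\ref{lem:crudeLowerBound}, showing $\log \tilde p_N/N^2\to 0$ on $\mathcal A_x$) together with a tilting estimate (Lemma~\ref{thm:veryFast}) to force $B_{K_N}\geq c\sqrt{K_N}\log\log K_N$ at negligible cost on the $\log N$ scale, and only then switches to the Brownian comparison on $[K_N,N]$ where the starting height now dominates the coupling error. Without this step (or an equivalent one) the lower bound does not go through, and the role of $\mathcal A_x$ in the lower bound is never actually used in your argument.

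There is also a smaller gap in how you handle the coupling constant. The KMT error for $B$ is bounded by $C_B\log N$ with $C_B$ an a.s.\ finite random variable that depends on the $B$-side of the coupling; after conditioning on $W$ (equivalently on $\tilde W$ and the $W$-coupling auxiliary), $C_B$ is still random under the remaining average over $B$. So the ``starting height'' in your outer events is itself random, and you cannot plug it into Theorem~\ref{thm:BMExtended} as a deterministic $g$. The paper sidesteps this by fixing a deterministic threshold, defining $\mathcal B_N=\{\sup_{k\leq N}|B_k-\sigma_B\hat B_k|\leq(\log N)^2\}$, and showing $\log\mathbb P(\mathcal B_N^c)/\log N\to-\infty$ so that the complement is negligible; on $\mathcal B_N$ the error is a deterministic $(\log N)^2$ and the Brownian theorem applies cleanly. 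Your final paragraph gestures at this but does not carry it out.
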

We stress that the function $\gamma$ is the same as in theorems in Section \ref{sec:BMoverBMresults}.

Theorem \ref{thm:BRWBasic} can be extended to a more general model of a random walk in random environment that we define now. Let $\mu = \{\mu_n\}_{n \in \N}$ be an i.i.d. sequence with values in the space of probability laws on $\R$. Conditionally on $\mu$ we sample $\{X_n\}_{n\in \N}$ a sequence of independent random variables such that $X_n$ has law~$\mu_n$. Moreover, we set
\[
  S_n \eqdef \sum_{j=1}^n X_j, \ W_n \eqdef -\sum_{j=1}^n \mathbb{E}(X_j|\mu) \text{ and } B_n \eqdef S_n + W_n.
\]
Note that $W$ is a random walk and conditionally on $\mu$ the process $B$ is the sum of independent centred random variables. We make the following standing assumptions:
\begin{description}
  \item[(A1)] We have $\E W_1 =0$, $\Var{W_1}\in[0,+\infty)$ and $\Var{B_{1}}=\E B_{1}^{2}\in (0,+\infty)$.
  \item[(A2)] There exist $C_{1},C_{2}>0$ such that $\E(e^{C_1 |B_1|}|\mu) \leq C_2$ a.s.
  \item[(A3)] There exists $C>0$ such that $\E e^{C |W_1|} < +\infty$.
\end{description}
We introduce a function $f:\N \to \N$ and we extend definition (\ref{eq:canStay}) as follows 
\begin{equation}
	\mathcal{A}_{x}\eqdef\bigcap_{N\geq0}\cbr{\pr{\forall_{n\leq N}x+B_{n}\geq W_{n}+f(n)|W}>0}.\label{eq:canStay2}
\end{equation}

\begin{thm}
\label{thm:BRWBasicExtended}
Let $S$ be a random walk in random environment and $B$, $W$ as described above. Let $f:\N\to \N$ such that $|f(n)| = o(n^{1/2-\epsilon})$ for some $\epsilon>0$. For any $x>0$ and $0 \leq a < b \leq +\infty$ the following limit exists
\begin{multline}
	\lim_{N\to+\infty}\frac{-\log\pr{\forall_{n\leq N}x+S_{n}\geq f(n),S_N \in (aN^{1/2}, bN^{1/2})|\mu}}{\log N}\\=\begin{cases}
	\gamma\rbr{\sqrt{\frac{\Var{W_{1}}}{\Var{B_{1}}}}} & \text{ on }\mathcal{A}_{x},\\
	+\infty & \text{ on }\mathcal{A}_{x}^{c}.
	\end{cases},\quad\text{a.s.}\label{eq:extendedRWoverRWclaim}
\end{multline}
\end{thm}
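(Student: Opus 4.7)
The plan is to reduce the statement to its Brownian counterpart, Theorem \ref{thm:BMExtended}, via strong invariance principles applied conditionally on the environment $\mu$. Since $S_n = B_n - W_n$, the barrier event rewrites as $\{x + B_n \geq W_n + f(n)\}$ and the endpoint condition as $\{B_N - W_N \in (aN^{1/2}, bN^{1/2})\}$. On $\mathcal{A}_x^c$ the conditional probability vanishes from some rank $N_0(\mu)$ onwards by definition, immediately yielding the $+\infty$ branch of \eqref{eq:extendedRWoverRWclaim}; we therefore restrict attention to the event $\mathcal{A}_x$.

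Apply the classical KMT embedding to the i.i.d.\ random walk $W$, whose increments have exponential moments by (A3), to produce a standard Brownian motion $\tilde W$ with
\[
\sup_{n\leq N}|W_n - \sigma_W \tilde W_n| = O(\log N) \quad \text{a.s., where } \sigma_W^2 \eqdef \Var{W_1}.
\]
Conditionally on $\mu$, the sequence $\{X_j - \E(X_j|\mu_j)\}_{j\in\N}$ is independent, centered, and by (A2) has uniform conditional exponential moments. Sakhanenko's strong embedding for independent, non-identically distributed summands then provides, on a suitable enlargement of the probability space, a standard Brownian motion $\tilde B$ with
\[
\sup_{n\leq N}\bigl|B_n - \tilde B_{v(n)}\bigr| = O(\log N) \quad \text{a.s., where } v(n) \eqdef \sum_{j=1}^n \Var{X_j|\mu_j},
\]
the implicit constant depending only on the bounds in (A2) and therefore being deterministic in $\mu$. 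The i.i.d.\ sequence $\{\Var{X_j|\mu_j}\}_j$ has mean $\sigma_B^2 \eqdef \Var{B_1}$ and finite moments of all orders (again by (A2)), so standard concentration gives $|v(n) - n\sigma_B^2| \leq n^{1/2 + o(1)}$; combined with the Brownian modulus of continuity, this yields $|\tilde B_{v(n)} - \tilde B_{n\sigma_B^2}| \leq n^{1/4 + o(1)}$ a.s.

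Setting $\eta_N = N^{1/4 + o(1)}$, the discrete random walk event is sandwiched between two events of the form
\[
\bigl\{\forall n\leq N : x \pm \eta_N + \tilde B_{n \sigma_B^2} \geq \sigma_W \tilde W_n + f(n) \mp \eta_N\bigr\},
\]
the endpoint constraint being adjusted analogously by $O(\eta_N)$. Rescaling $\tilde B$ and $\tilde W$ to unit speed, the barrier ratio becomes $\beta = \sigma_W/\sigma_B = \sqrt{\Var{W_1}/\Var{B_1}}$, the starting level becomes $g(N) = (x \pm \eta_N)/\sigma_B$ growing polynomially slowly in $N$, and the effective barrier is still $o(n^{1/2-\epsilon'})$ for some $\epsilon' > 0$. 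A standard modulus-of-continuity argument compares the resulting integer-time Brownian events to their continuous-time analogues at the cost of an extra $O(\sqrt{\log N})$ in the barrier. We are now precisely in the framework of Theorem \ref{thm:BMExtended}, which yields $-\gamma(\beta)$ as the almost sure $\log/\log N$ limit for both sandwich bounds, giving the claim.

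The principal difficulty is the uniformity in $\mu$ of the strong coupling for $B$: without hypothesis (A2) the Sakhanenko constant could depend on $\mu$ and blow up along subsequences, spoiling the reduction. The secondary technical points, namely controlling the fluctuations of the time change $v(n)$ and passing between integer-time and continuous-time Brownian persistence probabilities, are handled by routine concentration and modulus-of-continuity estimates.
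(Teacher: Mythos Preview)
Your overall strategy---strong approximation for $W$ and (conditionally on $\mu$) for $B$, then reduction to Theorem~\ref{thm:BMExtended}---is exactly the route the paper takes. The upper bound goes through essentially as you describe, though note that your starting level $g(N)=(x+\eta_N)/\sigma_B\sim N^{1/4}$ fails the hypothesis $\log g(t)/\log t\to 0$ of Theorem~\ref{thm:BMExtended}; the paper avoids this simply by discarding the constraints for $n<\log N$, which only enlarges the event and keeps the starting shift at $O((\log N)^{2})$.

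The real gap is in the lower bound. Your sandwich replaces the starting point $x$ by $x-\eta_N$, which is negative; if instead you fold the error into the barrier, the barrier at the early times $n=1,2,\ldots$ is of order $(\log N)^{2}$ (from KMT/Sakhanenko applied uniformly on $[0,N]$), so the Brownian probability of clearing it there is at best $e^{-c(\log N)^{2}}$, destroying the polynomial lower bound. Using the pointwise error $\eta_n$ does not help either, because the Sakhanenko constant is random under $\mathbb P(\cdot\,|\,\mu)$ and cannot be absorbed into a deterministic $f$. More fundamentally, on $\mathcal A_x$ one only knows that the \emph{discrete} walk can stay above the discrete barrier; nothing in your argument converts this into a usable lower bound for the first few steps once the barrier has been raised by the coupling error.

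The paper resolves this with a genuinely additional ingredient: a two-scale bootstrap. One first proves a crude bound (Lemma~\ref{lem:crudeLowerBound}) showing that on $\mathcal A_x$ the survival probability up to time $k_N\asymp(\log\log N)^{6}$ is at least $e^{-k_N^{2}}$, which is harmless on the $\log N$ scale; then an exponential tilting (Lemma~\ref{thm:veryFast}) lifts $B$ to height $A_N\asymp(\log N)^{3}\log\log\log N$ by time $K_N=(\log N)^{6}$ at negligible cost. Only from time $K_N$, with this large starting point absorbing all coupling errors, is the strong approximation invoked and Theorem~\ref{thm:BMExtended} applied. Your proposal is missing this initial stage, and without it the lower half of the sandwich does not close.
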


The previous result holds with some uniformity on the starting position. It is somewhat cumbersome to define an analogue of $\mathcal{A}_x$ in this case. For this reason we state an example with the starting position $x_N\nearrow +\infty$, such that the event becomes trivial by Fact \ref{fact:nonInfinity}.
\begin{thm}
\label{thm:BRWBasicExtended2}Let $S,B$ and $W$ be as above. Let $f:\N\to \N$ such that $|f(n)| = o(n^{1/2-\epsilon})$ for some $\epsilon>0$ and $\cbr{x_n}_n\geq0$ be such that $x_n\nearrow +\infty$ and $x_n = e^{o(\log n)}$. Then for any $0 \leq a < b \leq +\infty$ the following limit exists
\begin{equation*}
	\lim_{N\to+\infty}\frac{\log\pr{\forall_{n\leq N}x_N+S_{n}\geq f(n),S_N \in (aN^{1/2}, bN^{1/2})|W}}{\log N}= -\gamma\rbr{\sqrt{\tfrac{\Var{W_{1}}}{\Var{B_{1}}}}} \text{ a.s.}
\end{equation*}
\end{thm}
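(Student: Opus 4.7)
Set
\[
  p_N \eqdef \pr{\forall n \leq N,\, x_N + S_n \geq f(n),\, S_N \in (aN^{1/2}, bN^{1/2})\,\vert\,\mu}.
\]
I would establish matching almost-sure bounds $\log p_N/\log N \to -\gamma(\sqrt{\Var{W_1}/\Var{B_1}})$ conditionally on $\mu$; the version conditionally on $W$ stated in the theorem follows by integrating over the conditional law of $\mu$ given $W$, exploiting that the target limit is deterministic and that $p_N$ is monotone in the starting height.

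For the lower bound on $p_N$ I would use monotonicity of the event in $x_N$. Fix $k \in \N$; since $x_N \nearrow +\infty$, eventually $x_N \geq k$, so
\[
  p_N \geq \pr{\forall n \leq N,\, k + S_n \geq f(n),\, S_N \in (aN^{1/2}, bN^{1/2})\,\vert\,\mu}.
\]
Theorem~\ref{thm:BRWBasicExtended} applied with $x = k$ gives that the right-hand side equals $N^{-\gamma + o(1)}$ almost surely on $\mathcal{A}_k$, and Fact~\ref{fact:nonInfinity} yields $\pr{\bigcup_{k \in \N} \mathcal{A}_k} = 1$. Hence almost surely $\omega$ belongs to some $\mathcal{A}_k$, giving $\liminf_N \log p_N/\log N \geq -\gamma$.

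For the upper bound on $p_N$ I would apply the Markov property at a polylogarithmic intermediate time $T_N \eqdef \lfloor(\log N)^K\rfloor$ with $K$ large. Writing $E_{T_N} \eqdef \{\forall n \leq T_N,\, x_N + S_n \geq f(n)\}$ and using the i.i.d.\ structure of the environment,
\[
  p_N \leq \E\sbr{ \ind{E_{T_N}}\, \tilde q_N(S_{T_N})\, \big\vert\, \mu },
\]
where $\tilde q_N(y)$ is the quenched probability that the walk restarted from $y + x_N$ at time $T_N$ stays above $f$ until $N$ and ends in $(aN^{1/2}, bN^{1/2})$. Splitting at $M_N \eqdef T_N^{1/2}\log N$, the tail $\{|S_{T_N}| > M_N\}$ has quenched probability at most $e^{-c\log^2 N} = o(N^{-\gamma})$ by assumption~(A2), and so contributes negligibly. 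On the complementary event, the restarted walk's effective starting height is bounded by $M_N + x_N = N^{o(1)}$, and one is left with the task of arguing that $\tilde q_N(y) \leq N^{-\gamma + o(1)}$ uniformly in such $y$.

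The principal technical obstacle is precisely this uniformity. Theorem~\ref{thm:BRWBasicExtended} provides the persistence asymptotic only pointwise in the starting height, whereas here we must bound the quenched probability from a height that grows, albeit as $N^{o(1)}$, with $N$. The cleanest resolution is to extract from the proof of Theorem~\ref{thm:BRWBasicExtended} a quantitative \emph{quenched ballot-type estimate} of the form
\[
  \pr{\forall n \leq N,\, x + S_n \geq f(n),\, S_N \in I \,\vert\, \mu} \leq C(\mu)\, x^{\alpha}\, N^{-\gamma + o(1)}
\]
uniformly in $x \geq 0$ for some fixed $\alpha \geq 0$---the quenched analogue of the classical ballot bound $\lesssim x/\sqrt{N}$ valid when $W \equiv 0$. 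Substituting $x \leq M_N + x_N = N^{o(1)}$ then absorbs the polynomial factor on the logarithmic scale, yielding $\tilde q_N(S_{T_N}) \leq N^{-\gamma + o(1)}$ on the bulk event and hence $\limsup_N -\log p_N/\log N \leq \gamma$ almost surely. A stopping-time reduction to the first return of $x_N + S_n - f(n)$ to a bounded window is an appealing alternative but requires a quenched hitting-time estimate stronger than the diffusive $h/\sqrt{T}$ bound, which is insufficient when combined with the scale $N^{-\gamma}$ for $\gamma > 1/2$.
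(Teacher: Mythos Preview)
Your lower bound via monotonicity in $x_N$ and Fact~\ref{fact:nonInfinity} is correct and is essentially the intended argument.

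For the upper bound, however, you take a detour that leaves a real gap, and you miss the direct route the paper has set up. The paper does not prove Theorem~\ref{thm:BRWBasicExtended2} explicitly; it says the result ``follows by rather simple modifications'' of the proof of Theorem~\ref{thm:BRWBasicExtended}. The point is that the upper-bound half of that proof already accommodates a growing starting height with no extra work. After the KMT coupling step the fixed $x$ is replaced by $x + O((\log N)^2)$ (see the display around~\eqref{eq:estt1}), and the argument then reduces to the Brownian statement of Theorem~\ref{thm:BMExtended}. That theorem is formulated precisely to allow a starting height $g(t)$ satisfying $\log g(t)/\log t \to 0$, i.e.\ $g(t)=t^{o(1)}$. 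Replacing $x$ by $x_N=e^{o(\log N)}$ therefore changes nothing: the effective height after coupling is $x_N + O((\log N)^2)=N^{o(1)}$, and Theorem~\ref{thm:BMExtended} applies verbatim. This is the ``simple modification''.

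By contrast, your Markov decomposition at time $T_N=(\log N)^K$ reduces the problem to a uniform quenched ballot-type bound $\pr{\cdot\,|\,\mu}\leq C(\mu)\,x^{\alpha}N^{-\gamma+o(1)}$, which you correctly flag as the principal obstacle but do not prove. Nothing in the paper establishes such a polynomial-in-$x$ estimate; the existing results give only the logarithmic asymptotic for each admissible starting regime. So as written your upper bound is incomplete, and the fix is not to extract a new estimate but to recognise that Theorem~\ref{thm:BMExtended} (through its $g(t)$ parameter) already provides the needed uniformity on the Brownian side, which the KMT coupling then transfers to the random walk.
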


\subsection{Related works and motivations}\label{sec:related_works_motivations}

Our result can be understood from various perspectives. One of them is the so-called entropic repulsion. This question was asked in {\cite{Bertacchi:2002aa}} in the context of the Gaussian free field for $d\geq3$. Namely, the authors studied the repulsive effect on the interface of the wall which is a fixed realization of an i.i.d. field $\cbr{\phi_{x}}_{x\in\mathbb{Z}^{d}}$. They observe that the tail of $\phi_{x}$ plays a fundamental role. When it is subgaussian the effect of the wall is essentially equivalent to the wall given by $0$, while when the tail is heavier than Gaussian the interface is pushed much more upwards. It would be interesting to ask an analogous question in our case. By Fact \ref{fact:IIDwall} we know already that the disorder has a negligible effect when $\E X_{i}^{2+\epsilon}<+\infty,$ for $\epsilon>0$. We expect that when $\E X_{i}^{2}=\infty$ the repulsion becomes much stronger.

The paper {\cite{Bertacchi:2002aa}} was followed by \cite{Bertacchi:2003aa} which could be seen as an analogue of our work. Namely, the topic of this paper is a Gaussian free field interface conditioned to be above the fixed realization of another Gaussian free field. The authors obtain the precise estimates for the probability of this event and the entropic repulsion induced by the conditioning.

Theorems \ref{thm:BMBasic} and \ref{thm:OUresult} can be seen as a first step in the study of persistence exponents in random environment. These results give examples of a one-parameter family of persistence exponents.

Similarly, a natural question arising in random walk theory is to study the probability for a random walk to stay non-negative during $n$ units of time. Typically this probability decays as $n^{-1/2}$, which is known as the ballot theorem. Our result stated in Theorem \ref{thm:BRWBasicExtended} provides a version of this result for random walks in random environment. The decay is  $n^{-\gamma}$ for $\gamma\geq 1/2$. Moreover, $\gamma>1/2$ whenever the quenched random walk is not centered.

This perspective was the initial motivation for analyzing the problems in this paper (more precisely the result given in Theorem \ref{thm:BRWBasicExtended}). Such a question arises from studies of extremal particles of a branching random walk in a time-inhomogeneous random environment. In the companion paper~\cite{MM15b} we show that the randomness of the environment has a slowing effect on the position of the maximal particle. Namely, the logarithmic correction to the speed is bigger than in the standard (time-homogenous) case, which is a consequence of (\ref{eq:relevance}).

\subsection{Organization of the paper}

The next section is a collection of preliminary results on the FKG inequality, Ornstein-Uhlenbeck processes and some technical results. In Section \ref{sub:Proof-of-convergence} we use Kingman's theorem to show the convergence \eqref{eq:gammaDef}. We prove \eqref{eq:mainAim} in Section \ref{sec:relevantDisorder} by inferring that the disorder of the wall has an effect on the behaviour of the probability. Section \ref{sec:BMProofs} is devoted to obtaining Theorem~\ref{thm:BMBasic} from Theorem~\ref{thm:OUresult} and generalize it to obtain Theorem \ref{thm:BMExtended}. This last theorem is used in Section \ref{sub:ProofsRW} to study the analogue problem for random walks in random environment. The concluding Section \ref{sec:Disussion-and-Open} contains further discussion and open questions.

\section{Preliminaries and Technical Results}
\label{sub:lemmas}

In this section we list a collection of results that are useful in the rest of the article. We first introduce the so-called FKG inequality for a Brownian motion and an Ornstein-Uhlenbeck process, that states that increasing events are positively correlated.  We then list some integrability facts concerning Ornstein-Uhlenbeck and derive technical consequences.

\subsection{The FKG inequality for Brownian motion and Ornstein-Uhlenbeck processes}

In the proofs we often use the so-called FKG inequality, that we now introduce. For $T \geq 0$, we denote by $\mathcal{C}\eqdef\mathcal{C}([0,T],\R)$, the space of continuous functions, endowed with the uniform norm topology. We introduce a partial ordering $\prec$ on this space: For two $f,g\in\mathcal{C}$, we set
\begin{equation}
  f\prec g\text{ if and only if }\forall_{t\in[0,T]}f(t)\leq g(t).\label{eq:order}
\end{equation}
The FKG inequality is the following estimate, that follows from \cite[Theorem 4 and Remark 2.1]{Barbato:2005aa}.
\begin{fact}
\label{fact:(The-FKG-inequality)}(The FKG inequality) Let $X$ be a Brownian motion or an Ornstein-Uhlenbeck process and $F,G:\mathcal{C}\to\R$ be bounded measurable functions, which are non-decreasing with respect to $\prec$ then 
\begin{equation}
\ev{}\sbr{F(X)G(X)}\geq\ev{}\sbr{F(X)}\ev{}\sbr{G(X)}.\label{eq:fkgFunctions}
\end{equation}
\end{fact}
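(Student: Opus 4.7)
My plan is a three-step reduction: first from the Ornstein--Uhlenbeck process to Brownian motion, then from the Brownian path to a finite-dimensional Gaussian vector, and finally to Pitt's classical association theorem for Gaussian vectors with non-negative covariances. Using the representation (\ref{eq:OUandBMRelation}), the map $\Phi:w\mapsto\bigl(t\mapsto xe^{-\mu t}+\tfrac{\sigma}{\sqrt{2\mu}}e^{-\mu t}w(e^{2\mu t}-1)\bigr)$ sends Brownian paths to OU paths. Since $\tfrac{\sigma}{\sqrt{2\mu}}e^{-\mu t}>0$ for every $t$, the map $\Phi$ is order-preserving for $\prec$, and so if $F,G:\cSpace\to\R$ are bounded and non-decreasing on OU paths, then $F\circ\Phi,G\circ\Phi$ are bounded and non-decreasing on Brownian paths. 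Consequently (\ref{eq:fkgFunctions}) for an OU process follows from the same inequality for a Brownian motion.

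To handle the Brownian case, I would approximate by finite-dimensional projections. Let $\pi_n$ be the polygonal interpolation through the values on the dyadic grid $\cbr{kT/2^n:0\leq k\leq 2^n}$. This operator preserves $\prec$, and by a.s. uniform continuity of Brownian paths, $\pi_n(B)\to B$ uniformly on $[0,T]$ a.s., so bounded convergence yields $\E\sbr{F(\pi_n(B))G(\pi_n(B))}\to\E\sbr{F(B)G(B)}$ and analogously for the product of means. Since $F\circ\pi_n,G\circ\pi_n$ depend only on finitely many coordinates of $B$ and remain non-decreasing, the problem reduces to proving $\E\sbr{\tilde F(X)\tilde G(X)}\geq\E\sbr{\tilde F(X)}\E\sbr{\tilde G(X)}$ for the centered Gaussian vector $X=(B_{kT/2^n})_k$ and coordinate-wise non-decreasing bounded $\tilde F,\tilde G$.

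The finite-dimensional step is Pitt's theorem: any Gaussian vector whose covariance matrix has only non-negative entries is positively associated. Since $\mathrm{Cov}(B_s,B_t)=\min(s,t)\geq 0$, the hypothesis holds. A self-contained proof proceeds by mollifying $\tilde F$ and $\tilde G$ with a non-negative symmetric kernel to reduce to the smooth case (monotonicity is preserved) and then running a Gaussian interpolation: set $\Sigma(\theta)=(1-\theta)D+\theta\Sigma$, interpolating between the diagonal $D$ of $\Sigma$ and $\Sigma$ itself, and differentiate $\E\sbr{\tilde F(X(\theta))\tilde G(X(\theta))}-\E\sbr{\tilde F(X(\theta))}\E\sbr{\tilde G(X(\theta))}$ in $\theta$ using Stein's lemma. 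At $\theta=0$ the coordinates are independent, so the quantity vanishes and the sign of the derivative controls the result.

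The main obstacle lies in this last step: after the $\theta$-derivative, the naive expansion produces terms of the form $\Sigma_{ij}\,\E\sbr{\partial_i\tilde F\cdot\partial_j\tilde G}$, which are manifestly non-negative by monotonicity, together with mixed covariance terms involving second derivatives of $\tilde F$ and $\tilde G$ whose signs are not immediate. Handling the latter requires an inductive argument on dimension, or equivalently an iterated conditioning using the fact that the conditional mean in a Gaussian vector remains monotone in the conditioning variables precisely when the covariances are non-negative. A complete treatment tailored to the Brownian motion and Ornstein--Uhlenbeck processes is carried out in \cite{Barbato:2005aa}, which we would simply invoke.
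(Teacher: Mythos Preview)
Your proposal is correct and takes essentially the same approach as the paper: both reduce the Ornstein--Uhlenbeck case to Brownian motion via the order-preserving representation (\ref{eq:OUandBMRelation}) and then invoke \cite{Barbato:2005aa} for the Brownian FKG inequality, with the paper giving no further detail beyond that citation. Your additional sketch (finite-dimensional reduction followed by Pitt's theorem) is extra exposition rather than a different method; note in passing that the polygonal-approximation step as written needs continuity of $F,G$ to pass to the limit, not just measurability, so a monotone-class or approximation-by-continuous-monotone-functions argument would be required there---but since you ultimately defer to \cite{Barbato:2005aa} for the complete treatment this does not affect the validity of your proof.
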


The result of \cite{Barbato:2005aa} is stated for the Brownian motion only. However, this result is easily transferred to the Ornstein-Uhlenbeck process, as \eqref{eq:OUandBMRelation} preserves the order $\prec$ defined in \eqref{eq:order}. The same reasoning of transferring estimates on the Brownian motion to the Ornstein-Uhlenbeck process holds for the other proofs of the section. Thus to shorten and simplify proofs, we only work with Brownian motion in the rest of the section.

We often use the following corollary of Fact \ref{fact:(The-FKG-inequality)}, stating that increasing events (for the order~$\prec$) are positively correlated.
\begin{cor}
\label{cor:conditionalFKG}
Let $X$ be a Brownian motion or an Ornstein-Uhlenbeck process and $\mbox{\ensuremath{\mathcal{A}},\ensuremath{\mathcal{B}}}$ be increasing events (i.e. such that the functions $1_{\mathcal{A}}$ and $1_{\mathcal{B}}$ are non-decreasing for $\prec$), then
\begin{equation}
  \pr{\mathcal{A}\cap \mathcal{B}}\geq\pr{\mathcal{A}}\pr{\mathcal{B}}.\label{eq:FKG}
\end{equation}
\end{cor}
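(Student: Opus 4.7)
The plan is to obtain Corollary \ref{cor:conditionalFKG} as an immediate specialization of Fact \ref{fact:(The-FKG-inequality)} to indicator functions. Since the hypothesis is stated for bounded measurable $F,G$ that are non-decreasing with respect to $\prec$, the task reduces to checking that $F = 1_{\mathcal{A}}$ and $G = 1_{\mathcal{B}}$ satisfy these conditions, and then unpacking the resulting inequality.

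First I would verify measurability and boundedness: $1_{\mathcal{A}}, 1_{\mathcal{B}} : \mathcal{C} \to \R$ take values in $\{0,1\}$, so they are clearly bounded, and they are measurable because $\mathcal{A}, \mathcal{B}$ are (Borel) events in $\mathcal{C}$. Next I would check monotonicity: assuming $\mathcal{A}$ is an increasing event means precisely that $1_{\mathcal{A}}$ is non-decreasing with respect to $\prec$, i.e.\ if $f \prec g$ and $f \in \mathcal{A}$ then $g \in \mathcal{A}$, so $1_{\mathcal{A}}(f) \leq 1_{\mathcal{A}}(g)$. The same holds for $1_{\mathcal{B}}$. Hence the hypotheses of Fact~\ref{fact:(The-FKG-inequality)} are met.

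Applying \eqref{eq:fkgFunctions} with $F = 1_{\mathcal{A}}$ and $G = 1_{\mathcal{B}}$ then yields
\[
  \E\bigl[1_{\mathcal{A}}(X)\, 1_{\mathcal{B}}(X)\bigr] \;\geq\; \E\bigl[1_{\mathcal{A}}(X)\bigr]\,\E\bigl[1_{\mathcal{B}}(X)\bigr].
\]
Since $1_{\mathcal{A}} \cdot 1_{\mathcal{B}} = 1_{\mathcal{A} \cap \mathcal{B}}$ and $\E[1_{\mathcal{A}}(X)] = \pr{\mathcal{A}}$, $\E[1_{\mathcal{B}}(X)] = \pr{\mathcal{B}}$, $\E[1_{\mathcal{A} \cap \mathcal{B}}(X)] = \pr{\mathcal{A} \cap \mathcal{B}}$, this is precisely \eqref{eq:FKG}.

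There is essentially no obstacle here; the only point worth dwelling on is the bookkeeping observation that an ``increasing event'' in the sense of the corollary (indicator non-decreasing for $\prec$) is exactly what is needed to plug into Fact~\ref{fact:(The-FKG-inequality)}, and that this applies uniformly to both Brownian motion and Ornstein-Uhlenbeck by the remark following the statement of the fact (namely that the bijection \eqref{eq:OUandBMRelation} between the two processes preserves the order $\prec$).
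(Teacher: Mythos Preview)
Your proposal is correct and is exactly the intended argument: the paper states this result as an immediate corollary of Fact~\ref{fact:(The-FKG-inequality)} and offers no separate proof, so specializing \eqref{eq:fkgFunctions} to $F=1_{\mathcal{A}}$, $G=1_{\mathcal{B}}$ is precisely what is meant.
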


We also use the following property, sometimes called the strong FKG inequality. 
\begin{lem}
\label{lem:StrongFKG}Let $X$ be a Brownian motion or an Ornstein-Uhlenbeck process, $f,g:\R_{+}\to\R\cup\cbr{-\infty}$ be measurable functions such that $f(t)\geq g(t)$ for all $t\in\R_{+}$. We assume that $\pr{\forall_{t\in[0,T]}X_{t}\geq f(t)}>0$ and $\pr{\forall_{t\in[0,T]}X_{t}\geq g(t)}>0$. The probability distribution $\pr{\cdot|\forall_{t\in[0,T]}X_{t}\geq f(t)}$ stochastically dominates $\pr{\cdot|\forall_{t\in[0,T]}X_{t}\geq g(t)}$ with respect to $\prec$. In other words for any measurable function $h :\R_+ \to \R$, we have
\[
  \pr{\forall_{t \in [0,T]} X_t \geq h(t)|\forall_{t\in[0,T]}X_{t}\geq f(t)} \geq \pr{\forall_{t \in [0,T]} X_t \geq h(t)|\forall_{t\in[0,T]}X_{t}\geq g(t)}.
\]
\end{lem}

\begin{proof}
Let $X$ be a Brownian motion, constructed on the canonical Wiener space $(\mathcal{C},\mathbb{P})$. We assume without loss of generality that $\pr{X_{0}=f(0)}=\pr{X_{0}=g(0)}=0$. Using the Girsanov theorem we observe that $\pr{\forall_{t\in[0,T]}X_{t}\geq f(t)}=\pr{\forall_{t\in[0,T]}X_{t}>f(t)}$. Therefore we can freely exchange the symbols $\geq$ and $>$ whenever convenient.

As $X$ is continuous, note that $\cbr{\forall_{t\in[0,T]}X_{t}\geq f(t)}=\cbr{\forall_{t\in[0,T]}X_{t}\geq\tilde{f}(t)}$,
where $\tilde{f}:[0,T]\mapsto\R$ is given by $\tilde{f}(x)\eqdef\inf_{\omega\in\mathcal{F}}\omega(x)$, where $\mathcal{F}\eqdef\cbr{\omega\in\mathcal{C}:\forall_{t\in[0,T]}\omega(t)>f(t)}$. As the infimum of continuous function, we observe that $\tilde{f}$ is upper semicontinuous. Thus without loss of generality we assume in the rest of the proof that both $f$ and $g$ are upper semicontinuous.

By Baire's theorem there exists a sequence $\cbr{f_{n}}_{n}$ such that $f_{n}\in\mathcal{C}$ and $f_{n}(t)\searrow f(t)$ pointwise. This implies that $\mathcal{A}_{n}\eqdef\cbr{\forall_{t\in[0,T]}X_{t}>f_{n}(t)}$ is an increasing sequence of events and the limiting event is $\bigcup_{n}\mathcal{A}_{n}=\cbr{\forall_{t\in[0,T]}X_{t}>f(t)}$. We have an analogous sequence $\cbr{g_{n}}$ converging pointwise tor $g$. Up to replacing $g_n$ by $\min(f_{n},g_{n})$ we may assume that $g_{n}\prec f_{n}$ for any $n \in \N$.

For any continuous function $f_{n}$ and $\epsilon>0$, there exists a finite set $0\leq t_{1}<t_{2}\ldots<t_{k}\leq T$ such that $\pr{\cbr{\forall_{i\in\cbr{1,\ldots,n}}X_{t_{i}}\geq f(t_{i})}\backslash\cbr{\forall_{t\in[0,T]}X_{t}\geq f(t)}}\leq\epsilon$, by continuity of $X$.

We now assume that the statement of the fact is false. In this case there exists a measurable, bounded non-decreasing function $F : \mathcal{C}\to\R$ and $\epsilon>0$ such that 
\begin{equation}
\E\rbr{F(X)|\forall_{t\in[0,T]}X_{t}\geq f(t)} + \epsilon <\E\rbr{F(X)|\forall_{s\in[0,T]}X_{t}\geq g(t)}.\label{eq:contr0}
\end{equation}
Using the previous arguments, there exists find $n$ and $0\leq t_{1}<\ldots<t_{k}\leq T$ such that 
\begin{equation}
\E\rbr{F|\forall_{i\in\cbr{1,\ldots,k}}X_{t_{i}}\geq f_{n}(t_{i})}<\E\rbr{F|\forall_{i\in\cbr{1,\ldots,k}}X_{t_{i}}\geq g_{n}(t_{i})}.\label{eq:contr}
\end{equation}
Using the same techniques as \cite[B.6]{Giacomin:2001lr} one shows that $\pr{(X_{t_{1}},\ldots,X_{t_{k}})\in\cdot|\forall_{i\in\cbr{1,\ldots,k}}X_{t_{i}}\geq f_{n}(t_{i})}$ stochastically dominates $\pr{(X_{t_{1}},\ldots,X_{t_{k}})\in\cdot|\forall_{i\in\cbr{1,\ldots,k}}X_{t_{i}}\geq g_{n}(t_{i})}$. We notice that conditionally on $X_{t_{i}}=x$ and $X_{t_{i+1}}=y$ the process $\cbr{X_{t}-\sbr{(t_{i+1}-t)x+(t-t)y}/(t_{i+1}-t_{i})}_{t\in[t_{i},t_{i+1}]}$ is a Brownian bridge. Moreover if we condition on the whole vector $(X_{t_{1}},X_{t_{2}},\ldots,X_{t_{k}})$, by the Markov property the brides on the different intervals are independent.

As a consequence, we can construct two processes $X^f$ and $X^g$ such that $X^f$ has the law of $X$ conditionally on $\forall_{i\in\cbr{1,\ldots,k}}X_{t_{i}}\geq f_{n}(t_{i})$ and $X^g$ the law of $X$ conditionally on $\forall_{i\in\cbr{1,\ldots,k}}X_{t_{i}}\geq g_{n}(t_{i})$ such that $X^g \prec X^f$. Indeed, we construct $(X^f_{t_j})$ and $(X^g_{t_j})$ on the same probability space such that $X^f$ dominates $X^g$. We then link $X^f_{t_i}$ with $X^f_{t_{i+1}}$ and $X^g_{t_i}$ with $X^g_{t_{i+1}}$ using the same bridge $\beta^i$ of length $t_{i+1}-t_i$, setting
\[
  \forall s \in [t_i,t_{i+1}], \begin{cases} X^f_s = X^f(t_i) \frac{t_{i+1}-s}{t_{i+1}-t_i} + X^f(t_i) \frac{s-t_{i}}{t_{i+1}-t_i} + \beta^i_{s-t_i} \\  X^g_s = X^g(t_i) \frac{t_{i+1}-s}{t_{i+1}-t_i} + X^g(t_i) \frac{s-t_{i}}{t_{i+1}-t_i} + \beta^i_{s-t_i}.\end{cases}
\]
With this construction, we have $\forall t \leq T, X^g_t \leq X^f_t$, which contradicts \eqref{eq:contr}, thus \eqref{eq:contr0}.
\end{proof}

\subsection{Integrability estimates for Ornstein-Uhlenbeck processes}

We first list a collection of classical estimates for an Ornstein-Uhlenbeck process,
\begin{fact}
\label{fact:OUfact}Let $X$ be an Ornstein-Uhlenbeck process with parameters $\sigma,\mu>0$ starting from $X_{0}=x$.
\begin{enumerate}
  \item The process $X$ is a strong Markov process with an invariant measure $\mathcal{N}\rbr{0,\sigma^{2}/(2\mu)}$. For any $t>0$ the random variable $X_{t}$ is distributed as $\mathcal{N}\rbr{xe^{-\mu t},\frac{\sigma^{2}}{2\mu}(1-e^{-2\mu t})}.$ 
  \item For any $y\in\R$ the right tail of $T_y \eqdef \inf\cbr{t\geq0:X_{t}=y}$ decays exponentially fast.
  \item The process $\cbr{\tilde{X}_{t}}_{t\geq0}$ given by $\tilde{X}_{t}\eqdef X_{t}-e^{-\mu t}X_{0}$ is an Ornstein-Uhlenbeck process with parameters $\sigma,\mu>0$ starting from $\tilde{X}_{0}=0$.
  \item The random variable $\displaystyle M\eqdef\sup_{t\leq1}X_{t}$ has Gaussian concentration i.e. there exist $C,c>0$ such that
\[
  \pr{M>y}\leq C\exp(-cy^{2}),\quad \forall y\geq0.
\]
\end{enumerate}
\end{fact}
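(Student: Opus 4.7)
The plan is to exploit the explicit Brownian representation~\eqref{eq:OUandBMRelation} throughout; the four parts then all reduce to well-known statements about Brownian motion together with simple algebra.

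For part~(1), the strong Markov property comes directly from~\eqref{eq:OUandBMRelation}: the process is a deterministic (time-dependent) affine function of a Brownian path, and the filtration generated by $X$ coincides with that of $W_{e^{2\mu t}-1}$ up to the obvious time change, so Markovianity transfers. The law of $X_t$ is read off~\eqref{eq:OUandBMRelation}: $xe^{-\mu t}$ is deterministic and the second term is centred Gaussian with variance $\frac{\sigma^2}{2\mu}e^{-2\mu t}(e^{2\mu t}-1)=\frac{\sigma^2}{2\mu}(1-e^{-2\mu t})$. Letting $t\to+\infty$ gives the invariant law. Part~(3) is then immediate: subtracting $xe^{-\mu t}$ from both sides of~\eqref{eq:OUandBMRelation} for $X$ starting at $x$ yields exactly the right-hand side of~\eqref{eq:OUandBMRelation} for an OU started at $0$.

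For part~(4), I will bound crudely:
\[
  \sup_{t\leq 1} X_t \leq |x| + \frac{\sigma}{\sqrt{2\mu}} \sup_{s\leq e^{2\mu}-1}|W_s|.
\]
Since $\sup_{s\leq T}|W_s|$ has Gaussian right tail (reflection principle combined with the Mills ratio bound), so does $\sup_{t\leq 1}X_t$, which gives the desired constants $C,c$.

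Part~(2) is the least mechanical step and I expect it to be the main obstacle, since hitting-time tails require a regeneration argument rather than a pure transfer from Brownian motion. The approach is to fix a compact interval $I$ containing $y$ and the current state of the process, and to show that there exists $T>0$ and $p>0$ such that, uniformly in the starting point $z\in I$, the probability of hitting $y$ before time $T$ is at least $p$ (this uses the support theorem for the SDE or an explicit Brownian computation via~\eqref{eq:OUandBMRelation}). By Markov, conditionally on not hitting $y$ by time $kT$, the process is somewhere in $\R$; by part~(1) and a Borel-Cantelli-type estimate using the Gaussian invariant distribution, the chance that $X_{kT}\notin I$ has stretched-exponential tails in $k$, while on $\{X_{kT}\in I\}$ we regain hitting probability $\geq p$ in the next window of length $T$. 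Combining these two effects yields $\mathbb{P}(\inf\{t\geq 0:X_t=y\}>kT)\leq e^{-ck}$ for some $c>0$, which is the claim.
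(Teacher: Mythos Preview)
The paper does not prove this fact; it is simply stated as a list of ``standard estimates on Ornstein-Uhlenbeck processes'' and used without further justification. Your sketches for parts~(1), (3) and~(4) via the representation~\eqref{eq:OUandBMRelation} are correct and are exactly the sort of argument one would supply if pressed.

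Part~(2), however, has a genuine gap. You assert that ``the chance that $X_{kT}\notin I$ has stretched-exponential tails in $k$'', but for a fixed compact interval $I$ this is false: by part~(1) the law of $X_{kT}$ converges to the invariant measure $\mathcal{N}(0,\sigma^2/(2\mu))$, so $\pr{X_{kT}\notin I}$ tends to a strictly positive constant rather than to zero. Moreover, what your regeneration actually requires is control of $\pr{X_{kT}\notin I\,\big|\,T_y>kT}$, and conditioning on not having hit $y$ may push the process away from $I$. With a non-decaying error $\delta$ at each step, the recursion $a_k\leq(1-p)\,a_{k-1}+\delta$ stabilises near $\delta/p$ and does not give exponential decay.

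The repair is to use the mean-reverting drift explicitly rather than only the invariant distribution. One clean route is a Lyapunov argument: for small $a>0$ and $V(z)=e^{a|z|}$, the Gaussian law of $X_T$ given $X_0=z$ (mean $ze^{-\mu T}$, bounded variance) yields $\E_z[V(X_T)]\leq \rho\,V(z)+C$ for some $\rho<1$, $C<\infty$; combined with your uniform bound $\inf_{|z|\leq K}\mathbb{P}_z(T_y\leq T)\geq p>0$, standard Foster--Lyapunov theory then gives exponential tails for $T_y$. Alternatively, argue directly from the contraction: choose $T$ with $e^{-\mu T}<1/2$, so that from any $z$ one has $\pr{|X_T|\leq\max(|z|/2,K)}\geq 1-Ce^{-cK^2}$; the return time to $[-K,K]$ from any starting point is then dominated by $T$ times a random variable with exponential tails, after which your regeneration step goes through.
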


The first and third claims follow directly from \eqref{eq:OUandBMRelation}. The fourth claim is a consequence of this fact as well, as $\max_{t \leq 1} X_t \leq x + \frac{\sigma}{\sqrt{2\mu}} \max_{t \leq e^{2\mu}-1} W_s$, where $W$ a Brownian motion. The random variable $\max_{t \leq e^{2\mu}-1} W_s$ has Gaussian concentration by the reflection principle, proving this claim. Finally, the second claim was proved in \cite{NRS}.

We recall a classical bound on the tail estimate of Gaussian random variables.
\begin{fact}
\label{fact:Gaussian}
Let $Z$ be a standard Gaussian random variable and $x>0$. We have
\begin{equation}
\frac{1}{\sqrt{2\pi}}\frac{x}{1+x^{2}}e^{-x^{2}/2}\leq\pr{Z\geq x}\leq\frac{1}{\sqrt{2\pi}}\frac{1}{x}e^{-x^{2}/2}.\label{eq:gaussianTail}
\end{equation}
\end{fact}

We now present a convex analysis result, stating that the probability for a Brownian motion to stay above a curve $f$ is log-convex as a function of $f$.
\begin{lem}
\label{lem:covexTail}Let $X$ be a Brownian motion or Ornstein-Uhlenbeck process and $h_{1},h_{2} : \R\to\R\cup\cbr{-\infty}$ be c\`{a}dl\`{a}g functions such that 
\[
  \pr{\forall_{s\geq0}X_{s}\geq h_{1}(s)}>0,\quad\pr{\forall_{s\geq0}X_{s}\geq h_{2}(s)}>0.
\]
Then the function 
\[
  \begin{array}{rcl}
  [0,1] & \longrightarrow & \R_+\\
  \lambda & \longmapsto & -\log\pr{\forall_{s\geq0}X_{s}\geq\lambda h_{1}(s)+(1-\lambda)h_{2}(s)}
  \end{array}
\]
is convex.
\end{lem}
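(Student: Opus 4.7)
\medskip

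\noindent\textbf{Proof proposal.} The goal is equivalent to showing that $F(\lambda) \eqdef \pr{\forall_{s\geq 0} X_s \geq \lambda h_1(s) + (1-\lambda) h_2(s)}$ is log-concave on $[0,1]$. The plan is to approximate in finite dimensions, apply the Pr\'ekopa--Leindler inequality to a shifted convex cone integrated against a Gaussian density, and pass to the limit. First I would pick a countable dense set $D \subset \R_+$ containing all (at most countably many) up-jumps of $h_1$ and $h_2$, and an increasing sequence of finite subsets $D_n\uparrow D$. Since $X$ has continuous paths and the barriers are c\`adl\`ag, the finite-dimensional probabilities
\[
F_n(\lambda) \eqdef \pr{X_t \geq \lambda h_1(t) + (1-\lambda)h_2(t) \ \forall t \in D_n}
\]
decrease monotonically to $F(\lambda)$. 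Since the pointwise limit of log-concave functions is log-concave, it suffices to prove log-concavity of each $F_n$.

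Fix $n$ and enumerate $D_n=\cbr{t_1,\ldots,t_n}$. The vector $(X_{t_1},\ldots,X_{t_n})$ is Gaussian, and its density $\rho$ on $\R^n$ is log-concave since $-\log\rho$ is a positive-semidefinite quadratic form. Writing $\mathbf{h}_i \eqdef (h_i(t_1),\ldots,h_i(t_n))$, the key affine identity
\[
  \lambda u + (1-\lambda) v + \lambda \mathbf{h}_1 + (1-\lambda) \mathbf{h}_2 = \lambda (u + \mathbf{h}_1) + (1-\lambda)(v + \mathbf{h}_2),
\]
combined with the convexity of the cone $\R_+^n$, shows that the functions
\[
  f_1(u) \eqdef \rho(u+\mathbf{h}_1)\ind{\R_+^n}(u), \quad f_2(v)\eqdef \rho(v+\mathbf{h}_2)\ind{\R_+^n}(v), \quad w(z) \eqdef \rho(z+\lambda\mathbf{h}_1+(1-\lambda)\mathbf{h}_2)\ind{\R_+^n}(z)
\]
satisfy $w(\lambda u + (1-\lambda) v) \geq f_1(u)^\lambda f_2(v)^{1-\lambda}$ pointwise (with the usual convention $0^0 = 1$; the inequality is trivial when $u$ or $v$ lies outside $\R_+^n$). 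Pr\'ekopa--Leindler then yields
\[
  F_n(\lambda) \;=\; \int w \;\geq\; \left(\int f_1\right)^\lambda \left(\int f_2\right)^{1-\lambda} \;=\; F_n(1)^\lambda F_n(0)^{1-\lambda}.
\]
Applying the same reasoning with $h_{\lambda_1}$ and $h_{\lambda_2}$ in place of $h_1$ and $h_2$ for arbitrary $\lambda_1,\lambda_2 \in [0,1]$ extends this to full log-concavity of $F_n$, hence of $F$.

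The main technical obstacle is the finite-dimensional approximation step: for a c\`adl\`ag barrier one must include every up-jump of $h_1$ and $h_2$ in the sampling grid, since otherwise continuity of $X$ combined with right-continuity of $h_\lambda$ would only witness the right-limit values of the barrier at those points. Handling this carefully, together with an outer truncation at $t \leq T$ letting $T \to +\infty$ via monotone convergence, reduces the lemma to the purely finite-dimensional statement that the Gaussian measure of a shifted convex cone is log-concave in the shift parameter, which is exactly what Pr\'ekopa--Leindler provides. The argument is insensitive to whether $X$ is a Brownian motion or an Ornstein--Uhlenbeck process, since only Gaussianity (hence log-concavity of $\rho$) is used.
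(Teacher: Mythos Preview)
Your proof is correct and follows essentially the same route as the paper: a finite-dimensional discretization followed by the Pr\'ekopa--Leindler inequality applied to the log-concave Gaussian density times the indicator of a shifted orthant. The paper is terser about the approximation step (it simply uses the grid $\{k/n : k \leq N\}$ and invokes ``standard limit arguments''), whereas you are more explicit about including the up-jumps of the c\`adl\`ag barriers in the sampling set; otherwise the arguments are interchangeable.
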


\begin{proof}
By standard limit arguments it is enough to show that for any $n,N\in\mathbb{N}$ the function 
\begin{equation}
  \begin{array}{rcl}
  [0,1] & \longrightarrow & \R_+\\
  \lambda & \longmapsto &-\log\pr{\forall_{k\leq N}X_{k/n}\geq\lambda h_{1}(k/n)+(1-\lambda)h_{2}(k/n)}
  \end{array}
  \label{eq:beingConvex}
\end{equation}
is convex. We use the Prekopa-Leindler inequality along the lines of the proof below \cite[Theorem 7.1]{Gardner:2002aa}. For $x \in \R^N$, set $H^{\lambda}(x)=d(x)\mathbf{1}_{\cbr{x_{k}\geq\lambda h_{1}(k/n)+(1-\lambda)h_{2}(k/n),\forall {k\leq N}}}(x)$, where we denote by $d$ the joint density of $(X_{1/n},X_{2/n},\ldots,X_{N/n})$. The density $d$ is log-concave i.e for any $\lambda\in(0,1)$ and $x,y\in\R^{N}$ we have $d(\lambda x+(1-\lambda)y)\geq d(x)^{\lambda}d(y)^{(1-\lambda)}$. Similarly 
\[
1_{\forall_{k}\lambda x_{k}+(1-\lambda)y_{k}\geq\lambda h_{1}(k/n)+(1-\lambda)h_{2}(k/n)}\geq\rbr{1_{\forall_{k}x_{k}\geq h_{1}(k/n)}}^{\lambda}\rbr{1_{\forall_{k}y_{k}\geq h_{2}(k/n)}}^{1-\lambda}.
\]
Thus the assumption of the Prekopa-Leindler inequality is fulfilled i.e. 
\[
H^{\lambda}(\lambda x+(1-\lambda)y)\geq\rbr{H^{1}(x)}^{\lambda}\rbr{H^{0}(y)}^{1-\lambda}.
\]
Now \cite[Theorem 7.1]{Gardner:2002aa} implies (\ref{eq:beingConvex}).
\end{proof}

We now give some estimates on the random variable $-\log\pr{\forall_{s\leq1}X_{s}\geq Y_{s}|Y}$, where $X$ and $Y$ are two independent Ornstein-Uhlenbeck processes.
\begin{lem}
\label{lem:boundedMoments}Let $X,Y$ be Ornstein-Uhlenbeck processes.
\begin{enumerate}
  \item Let $C,c>0$ and $x\geq0$. Then there exists $\tilde{C}>0$ such that for any $X_{0}\sim\mathcal{N}(x,c_{1}^{2})$ with $c_{1}\geq c$ and $Y_{0}\sim\mathcal{N}(0,C_{1}^2)$ with $C_{1}\in [0,C]$ we have that
\begin{equation}
  \ev{}\sbr{-\log\pr{\forall_{s\leq1}X_{s}\geq Y_{s}|Y}}\leq\tilde{C}.
  \label{eq:existenceOfTheFirstMoment1}
\end{equation}
  \item Let $X_{0}=x>0$ and $Y_{0}=0$, we set $\rho = \inf\left\{ t \geq 0 : Y_t = 0, \exists s < t : |Y_s| = 1 \right\}$. Then
\begin{equation}
  \ev{}\sbr{-\log\pr{\forall_{s\leq\rho}X_{s}\geq Y_{s}|Y}}<+\infty.
  \label{eq:existenceOfTheFirstMoment2}
\end{equation}
  \item Let $X_{0}=0$, $Y_{0}=0$ and $a,b>0$ then the random variable 
\[
  -\log\pr{\forall_{s\leq1}X_{s}\geq Y_{s}-a,X_{1}\geq Y_{1}+b|Y}
\]
has exponential moments. 
\end{enumerate}
\end{lem}

\begin{proof}
Let $X,Y$ be two independent Ornstein-Uhlenbeck processes of parameters $(\mu_1,\sigma_1), (\mu_2,\sigma_2)$. We first prove the third point. By the FKG property (\ref{eq:FKG}) we have
\[
  -\log\pr{\forall_{s\leq1}X_{s}\geq Y_{s}-a,X_{1}\geq Y_{1}+b|Y}\leq-\log\pr{\forall_{s\leq1}X_{s}\geq Y_{s}-a|Y}-\log\pr{X_{1}\geq Y_{1}+b|Y}.
\]
As $X_1,Y_1$ are two independent Gaussian random variables, the second term in the upper bound has exponential moments by Fact~\ref{fact:Gaussian}. We set $H=-\log\pr{\forall_{s\leq1}X_{s}\geq Y_{s}-a|Y}$. Applying \eqref{eq:OUandBMRelation}, we have
\[
  H\leq-\log\pr{\forall_{s\leq1}B_{t_{1}(s)}\geq\beta|W_{t_{2}(s)}|-a'|W}.
\]
where $B,W$ are Brownian motions, $\beta = \frac{\sigma_2}{\sigma_1}$, $t_{1}(s)=e^{\mu_{1}s}-1$ and $t_{2}(s)=e^{\mu_{2}s}-1$ ($(\mu_{1},\mu_{2}$ are parameters of the Ornstein-Uhlenbeck processes $X$ and $Y$). The constants $a',\beta>0$ can be calculated explicitly but do not matter for the calculations. We denote by
\begin{align*}
  \mathcal{A}_{1} & \eqdef\cbr{\forall_{i\in\mathbb{N}}\forall_{s\in[\frac{3}{4}2^{-i},2^{-i}]}B_{t_{1}(s)}\geq\beta|W_{t_{2}(s)}|-a'},\\
  \mathcal{A}_{2} & \eqdef\cbr{\forall_{i\in\mathbb{N\backslash}\cbr 0}\forall_{s\in[2^{-i},\frac{3}{2}2^{-i}]} B_{t_{1}(s)}\geq\beta|W_{t_{2}(s)}|-a'}.
\end{align*}
Using the FKG property (\ref{eq:FKG}) again we have 
\[
  H\leq-\log\pr{\mathcal{A}_{1}\cap\mathcal{A}_{2}|W}\leq-\log\pr{\mathcal{A}_{1}|W} - \log\pr{\mathcal{A}_{2}|W}.
\]
For $i \in \{1,2\}$ we write $H_{i}=-\log\pr{\mathcal{A}_{i}|W}$. We prove that $H_1$ is exponentially integrable. The same arguments will directly apply to prove that $H_2$ is exponentially integrable. We observe that 
\[
  \mathcal{B}\eqdef\cbr{\forall_{i\in\mathbb{N}}\forall_{s\in[\frac{3}{4}2^{-i},2^{-i}]}B_{t_{1}(s)}-B_{t_{1}(2^{-i}/2)}\geq\beta|W_{t_{2}(s)}-W_{t_{2}(2^{-i}/2)}|-2^{-i/4}a'/8}\subset\mathcal{A}_{1}.
\]
Let $\theta>0$, using the fact that the increments of a Brownian motion are independent we obtain
\[
L(\theta)\eqdef\E\rbr{e^{\theta H_{1}}}\leq\E\rbr{\pr{\mathcal{B}|W}^{-\theta}}= \prod_{i\in\mathbb{N}}L_{i}(\theta),
\]
where 
\[
L_{i}(\theta):=\E\rbr{\pr{\forall_{s\in[\frac{3}{2}2^{-i},2^{-i}]}B_{t_{1}(s)-t_{1}(2^{-i}/2)}\geq\beta|W_{t_{2}(s)-t_{2}(2^{-i}/2)}|-2^{-i/4}a''|W}^{-\theta}},
\]
and $a''\eqdef a'/8$. By the Brownian scaling we get
\[
L_{i}(\theta)=\E\rbr{\pr{\forall_{s\in[\frac{3}{2}2^{-i},2^{-i}]}B_{2^{i}[t_{1}(s)-t_{1}(2^{-i}/2)]}\geq\beta|W_{2^{i}[t_{2}(s)-t_{2}(2^{-i}/2)]}|-2^{i/4}a''|W}^{-\theta}}.
\]
There exist $0<c_{1}<C_{1}$ such that for any $i\in\mathbb{N}$ we have
\[c_{1}\geq2^{i}[t_{1}(\frac{3}{4}2^{-i})-t_{1}(2^{-i}/2)] \quad \text{and} \quad C_{1}\leq2^{i}[t_{1}(2^{-i})-t_{1}(2^{-i}/2)].
\]
We define analogously $0<c_{2}<C_{2}$ associated to $t_{2}$. We set
\[
  M\eqdef\sup_{s\in[c_{2},C_{2}]}\beta W_{s},\quad m\eqdef\inf_{s\in[c_{1},C_{1}]}B_{s}.
\]
With this notation, the estimate becomes 
\[
L_{i}(\theta)\leq\E\rbr{\pr{m\geq M-2^{i/4}a''|M}^{-\theta}}=:\tilde{L}_{i}(\theta).
\]
We have
\[
0\leq\tilde{L}_{i}(\theta)-1=\E\rbr{\frac{1-\pr{m\geq M-2^{i/4}a''|M}^{\theta}}{\pr{m\geq M-2^{i/4}a''|M}^{\theta}}}.
\]
It is well-known that for $x\geq0$ we have $q(x)\eqdef\pr{M>x}\leq C_{3}e^{-c_{3}x^{2}}$ for some $c_{3},C_{3}>0$. We also prove a bound from below for the tail of $m$. Namely, we have
\begin{align*}
  \pr{m>x} & \geq\pr{\cbr{B_{c_{1}}\geq x+5}\cap\cbr{\sup_{s\in[c_{1},C_{1}]}|B_{s}-B_{c_{1}}|\leq5}}\\
  & \geq\pr{B_{c_{1}}\geq x+5}\pr{\sup_{s\in[c_{1},C_{1}]}|B_{s}-B_{c_{1}}|\leq5}\geq C_{4}e^{-c_{4}x^{2}},
\end{align*}
for some $c_{4},C_{4}\geq0$. We combine the estimates to get
\begin{align*}
\tilde{L}_{i}(\theta)-1 & \leq2\theta\pr{M\leq2^{i/4}a''/2}\pr{m\leq-2^{i/4}a''/2}+\pr{M\geq2^{i/4}a''/2}\pr{m\geq0}^{-\theta}\\
 & \quad+\int_{0}^{+\infty}\pr{m\geq y}^{-\theta}\pr{M\geq y+2^{i/4}a''}\dd y.
\end{align*}
There exists $c_5,C_5>0$ such that the first two terms can be bounded from above by $C_{5}e^{-c_{5}i}$ for any $i \in \N$. The last term is bounded in the following way:
\[
  \int_{0}^{+\infty}\pr{m\geq y}^{-\theta}\pr{M\geq y+2^{i/4}a''}\dd y\leq C_{4}^{-\theta}C_{3}\int_{0}^{+\infty}e^{\theta c_{4}y^{2}}e^{-c_{3}(y+2^{i/4})^{2}}\dd y\leq C_{6}e^{-c_{6}i},
\]
for $c_{6},C_{6}>0$, with the last estimate holding under assumption that $\theta$ is small enough (i.e. $\theta <\frac{c_{3}}{c_4}$). We conclude that 
\[
0\leq\tilde{L}_{i}(\theta)-1\leq C_{5}e^{-c_{5}i}+C_{6}e^{-c_{6}i}.
\]
This is enough to claim that $\prod_{i\in\mathbb{N}}\tilde{L}_{i}(\theta)<+\infty$, thus $\prod_{i\in\mathbb{N}}L_{i}(\theta)<+\infty$. We conclude that $H_{1}$ admits exponential moments.

Similar, but simpler, calculations prove that
\begin{equation}
\ev{}\sbr{-\log\pr{\forall_{s\leq1}X_{s}\geq Y_{s}|Y,X_{0}=x,Y_{0}=0}}<+\infty.\label{eq:theFirstAim}
\end{equation}

We now prove (\ref{eq:existenceOfTheFirstMoment2}). By the FKG inequality (\ref{eq:FKG}) we have
\begin{align*}
\ev{}\sbr{-\log\pr{\forall_{s\leq\rho}X_{s}\geq Y_{s}|Y}}\leq & \ev{}\sbr{-\log\pr{\forall_{s\leq1}X_{s}\geq Y_{s}|Y}}\\
 & +\ev{}\sbr{\sum_{i=1}^{\lceil\rho\rceil}-\log\pr{\forall_{s\in[i,i+1]}X_{s}\geq\sup_{s\in[0,\rho]}Y_{s}|Y}}.
\end{align*}
The first term is finite by (\ref{eq:theFirstAim}). To treat the second one we study 
\[
p_{i}(m)\eqdef-\log\pr{\forall_{s\in[i,i+1]}X_{s}\geq m}.
\]
By point 3 of Fact \ref{fact:OUfact} the process $\tilde{X}_{t}\eqdef X_{i+t}-X_{i}e^{-\mu t}$ is an Ornstein-Uhlenbeck process starting from $\tilde{X}_{0}=0$. Therefore we have
\[
p_{i}(m)\leq-\log\pr{X_{i}\geq me^{\mu}+1}-\log\pr{\forall_{s\in[0,1]}\tilde{X}_{s}\geq-1}.
\]
Clearly $-\log\pr{\forall_{s\in[0,1]}\tilde{X}_{s}\geq-1}>-\infty$, using point 1 of Fact \ref{fact:OUfact} and (\ref{eq:gaussianTail}) one easily checks that 
\[
p_{i}(m)\leq C_{9}(m^{2}+1),
\]
for $C_{9}>0$. Recalling that $M=\sup_{s\in[0,\rho]}Y_{s}$ and Fact \ref{fact:decompositon} we conclude 
\[
\ev{}\rbr{\sum_{i=1}^{\lfloor\rho\rfloor}p_{i}\rbr M}\leq\ev{}\rbr{\rho\sbr{C_{9}(M^{2}+1)}}\leq\rbr{\ev{}\rbr{\rho{}^{2}}\ev{}\sbr{C_{9}(M^{2}+1)}^{2}}^{1/2}<+\infty.
\]
The estimate (\ref{eq:existenceOfTheFirstMoment1}) follows by similar calculations and Fact \ref{fact:OUfact}.
\end{proof}

\section{Existence and properties of the function \texorpdfstring{$\gamma$}{g}}
\label{sub:Proof-of-convergence}

In this section, we denote by $X,Y$ two independent Ornstein-Uhlenbeck processes with parameters $(\mu_1,\sigma_1)$ and $(\mu_2,\sigma_2)$ respectively. The main result of the section is the existence of $\gamma_{\mu_1,\mu_2}(\sigma_2/\sigma_1)>0$ such that
\[
  \lim_{t \to +\infty} \frac{-\log\mathbb{P}\rbr{\forall_{s\leq t}X_{s}\geq Y_{s}, X_t - Y_t \in (a,b)|Y}}{t} = \gamma_{\mu_1,\mu_2}(\sigma_2/\sigma_1) \text{ a.s.}
\]
To make notation lighter, we write $\gamma$ instead of $\gamma_{\mu_{1},\mu_{2}}(\sigma_2/\sigma_1)$, as well as $\delta$ instead of $\delta_{\mu_{1},\mu_{2}}(\sigma_2/\sigma_1)$ in the rest of the section. We start proving the annealed part of Theorem \ref{thm:OUresult}.
\begin{lem}
\label{lem:proofDeltaDef}
There exists $\delta > 0$ such that for any $0 \leq a < b \leq +\infty$,
\[
  \lim_{t \to +\infty} \frac{- \log \mathbb{P}\rbr{\forall_{s\leq t}X_{s}\geq Y_{s}, X_t - Y_t \in (a,b)}}{t} = \delta.
\]
\end{lem}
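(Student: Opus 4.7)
The natural approach is Fekete's subadditivity lemma applied to $\alpha(t) \eqdef -\log q(t)$, where
\[
  q(t) \eqdef \mathbb{P}\rbr{\forall_{s \leq t} X_s \geq \beta Y_s,\ X_t - \beta Y_t \in (a,b)},
\]
under a fixed starting law for $(X_0, Y_0)$. For definiteness I take $(X_0, Y_0)$ distributed as the joint invariant measure $\pi$ of $(X, Y)$ conditioned on $X_0 - \beta Y_0 \in (a, b)$.

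The core step is to show a super-multiplicativity estimate $q(t+s) \geq c\, q(t) q(s)$ for some $c > 0$ independent of $t, s$. By the Markov property of the joint diffusion $(X,Y)$ at time $t$,
\[
  q(t+s) = \E\sbr{\ind{\mathcal{E}_t}\, Q_s(X_t, Y_t)}, \qquad Q_s(x,y) \eqdef \mathbb{P}_{x,y}\rbr{\mathcal{E}_s,\ X_s - \beta Y_s \in (a,b)},
\]
where $\mathcal{E}_t \eqdef \cbr{\forall_{u \leq t} X_u \geq \beta Y_u}$. Restricting the outer expectation to $G_t \eqdef \cbr{X_t - \beta Y_t \in (a', b'),\ (X_t, Y_t) \in K}$ for some subinterval $(a', b') \subset (a, b)$ and a compact rectangle $K$,
\[
  q(t+s) \geq \mathbb{P}\rbr{\mathcal{E}_t \cap G_t} \cdot \inf_{(x,y) \in K,\ x - \beta y \in (a', b')} Q_s(x, y).
\]
For the infimum, I would use Fact~\ref{fact:OUfact}(3) to write $X^{(x)}_u - \beta Y^{(y)}_u = (\widetilde X_u - \beta \widetilde Y_u) + (x e^{-\mu_1 u} - \beta y e^{-\mu_2 u})$; since the second summand is a uniformly bounded deterministic perturbation over $(x,y) \in K$, the strong FKG inequality (Fact~\ref{fact:StrongFKG}) and the Gaussian tail bounds (\ref{eq:gaussianTail}) give $Q_s(x,y) \geq c_1 Q_s(x_0, y_0)$ with $c_1$ depending only on $K$ and $(a', b')$. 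Symmetrically, $\mathbb{P}(\mathcal{E}_t \cap G_t) \geq c_2 q(t)$ follows by an FKG comparison combined with Fact~\ref{fact:boundedMoments} to bound the mass of $(X_t, Y_t)$ outside $K$.

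Granting super-multiplicativity, Fekete's lemma yields the existence of $\delta \eqdef \lim_{t \to \infty} \alpha(t)/t \in [0, +\infty]$, and the same uniform controls imply that $\delta$ does not depend on the particular $(a, b)$. Finiteness $\delta < +\infty$ follows from the direct lower bound $q(t) \geq c_0^{\lceil t \rceil}$, obtained by forcing $(X_s, Y_s)$ to remain in a fixed favorable rectangle on each unit interval and iterating the Markov property. Positivity $\delta > 0$ follows from the ergodicity of $(X, Y)$ under $\pi$: since $\mathbb{P}_\pi(X_0 < \beta Y_0) > 0$, the ergodic theorem combined with the exponential tail of hitting times from Fact~\ref{fact:OUfact}(2) implies that the first time $\tau$ at which $X_\tau < \beta Y_\tau$ has an exponentially decaying tail, whence $q(t) \leq \mathbb{P}(\tau > t) \leq Ce^{-ct}$.

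The main obstacle is the uniform lower bound $Q_s(x, y) \geq c_1 q(s)$ over the compact set $K$: the OU decomposition reduces the task to comparing two probabilities for a Brownian-type event against barriers differing by a bounded deterministic term, and the strong FKG inequality together with the integrability estimates of Fact~\ref{fact:boundedMoments} must be combined carefully to keep the comparison uniform in $s$. Once this uniform control is in place, the rest of the proof is a routine application of Fekete.
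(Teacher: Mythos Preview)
Your subadditivity/Fekete approach is viable and the outline is correct, but it is a genuinely different route from the paper's. The paper disposes of the lemma in three lines by invoking spectral theory: the pair $(X,Y)$ killed on hitting $\{x=\beta y\}$ is a Markov process, and the exponential decay rate $\delta$ is identified as the principal eigenvalue of its generator on the half-space $\{x\geq\beta y\}$; existence, positivity, and independence of $(a,b)$ all come for free from the standard theory of killed elliptic diffusions with a spectral gap. Your argument, by contrast, is self-contained: you build super-multiplicativity $q(t+s)\geq c\,q(t)q(s)$ by hand via the Markov property, uniform initial-condition comparisons (using the affine decomposition of Fact~\ref{fact:OUfact}(3) and strong FKG conditionally on $Y$), and then deduce finiteness and positivity of $\delta$ from direct estimates. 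What the spectral route buys is brevity and automatic independence of the limit on $(a,b)$ and the starting point; what your route buys is that it does not appeal to any black box and makes explicit the uniform-in-$s$ comparison over compact initial data, which is exactly the kind of estimate recycled later in Section~\ref{sub:Proof-of-convergence} for the quenched exponent. Two places where your sketch would need a little more care: the positivity argument via ``ergodic theorem plus Fact~\ref{fact:OUfact}(2)'' is not quite literal when $\mu_1\neq\mu_2$, since $X-\beta Y$ is then not a one-dimensional OU and Fact~\ref{fact:OUfact}(2) does not apply directly (a Lyapunov/drift argument for the two-dimensional process, or the spectral-gap shortcut, is the clean fix); and the FKG comparisons you invoke are only stated in the paper for a single coordinate, so each application should be done conditionally on $Y$ and then averaged.
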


\begin{proof}
This result is a standard application from spectral theory, thus we only present a sketch of the proof. For any $t \geq 0$ and $x,y \in \R$, we set
\[
  u_t(x,y) = \mathbb{P}\rbr{\forall_{s\leq t}X_{s}\geq Y_{s}, X_t - Y_t \in (a,b)}.
\]
We introduce the space $D = \{(x,y) \in \R^2 : x > y\}$ and the operator
\[
  L = \frac{\sigma^2_1}{2} \partial_{x,x} + \frac{\sigma^2_2}{2} \partial_{y,y} - \mu_1 x \partial_x - \mu_2 y \partial_y.
\]
Note that the operator $L$ is an Ornstein-Uhlenbeck operator, which has been the subject of studies in the recent years, we refer to \cite{MOU} and the references therein. By the Feynman-Kac formula we have
\[
  \begin{cases}
    \partial_t u_{t}(x,y) = Lu_t(x,y) & \text{if } (x, y)\in D\\
    u_t(x,y) = 0 &\text{if } (x, y) \not \in D\\
    u_0(x,y) = \ind{x - y \in [a,b]}
  \end{cases}.
\]
Further, let $\nu$ be the measure with density $\exp\rbr{- \frac{\mu_1 x^2}{\sigma_1^2} - \frac{\mu_2 y^2}{\sigma^2_2}} $. We define the scalar product on ${L}^2(D,\nu)$ by
\[
  \crochet{f,g}_\nu = \int_{D} f(x,y) g(x,y) \nu(dxdy).
\]
By integration by parts, for any  $u,v \in \mathcal{C}^\infty(D)$ with compact support, we have
\[
  \crochet{u,L v}_{\nu} = - \frac{\sigma_1^2}{2} \crochet{\partial_x u, \partial_x v}_\nu - \frac{\sigma^2_2}{2} \crochet{\partial_y u, \partial_y v}_\nu = \crochet{v,L u}_\nu.
\]
As a consequence, $L$ can be extended into a negative self-adjoint operator on $\mathcal{H}^1_0(\nu)$. Moreover, this operator is compact, as a consequence, there exists an orthonormal basis $\{h_n\}_{n \in \mathbb{N}}$ of $L^2(D,\nu)$ and a decreasing negative sequence $\{\lambda_n\}_{n \in \mathbb{N}}$ such that
\[
  L h_n = \lambda_n h_n \text{ for any } n \in \N,
\]
and $h_1(x,y)>0$ for any $(x,y) \in D$.

Decomposing $f(x,y) = \ind{x-y \in [a,b]}$ on the basis $h_n$, we obtain
\[
   u_t(x,y) = \sum_{n \in \N}e^{t \lambda_n} h_n(x,y) \crochet{h_n,f}_\nu, \text{ for all } t \geq 0.
\]
In particular, this yields $\lim_{t \to +\infty} \frac{1}{t} \log u_t(x,y) = \lambda_1 <0$ uniformly on compact sets, concluding the proof.
\end{proof}

\subsection{Path decomposition}
\label{sub:Path-decomposition}

In this section, we present a decomposition of the path $Y$ into large excursions. This decomposition is used both in proofs of \eqref{eq:gammaDef} and \eqref{eq:mainAim}. We define the random variables $\cbr{\tau_{i}}_{i\geq0}, \cbr{\rho_{i}}_{i\geq0}$ such that $\rho_0=0$ and
\[
  \rho_{i+1}\eqdef
  \inf\cbr{t\geq\rho_{i}:Y_{t}=0\text{ and }\exists_{s\in(\rho_{i},t)}|Y_{s}|=1}
  \quad \text{and} \quad \tau_{i}\eqdef\sup\cbr{t<\rho_{i+1}:Y_{t}=0}.
\]
We also define $r_{i}\eqdef\rho_{i+1}-\rho_{i}$ and denote 
\begin{equation}
  Y^{i}(t)\eqdef Y_{t+\rho_{i}},\quad t\in[0,r_{i}].\label{eq:Yi}
\end{equation}

\begin{figure}[!ht]\label{fig:notation}
\caption{Notation used in the paper.}
\includegraphics[scale=0.42]{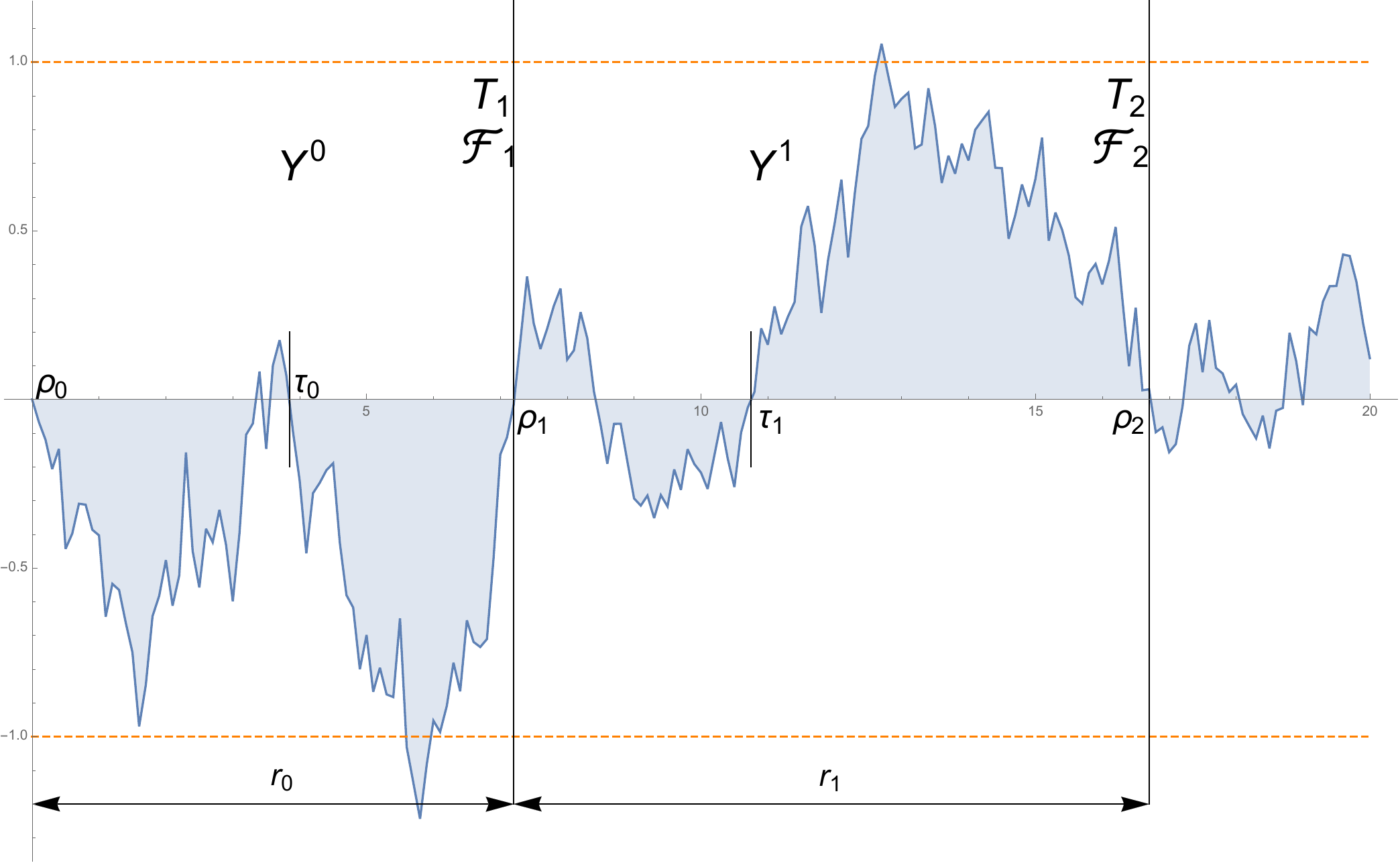}
\end{figure}

\begin{rem}
Note that $\rho_{i}$ is a stopping time (contrary to $\tau_{i}$). The precise definition of $\rho_{i}$ and $\tau_{i}$ are not important. What matters for our proofs is that on the interval $[\tau_{i},\rho_{i+1}]$ the process performs a ``macroscopic'' excursion which is symmetric around $0$, and that $\rho_i$ has a finite mean.
\end{rem}

\begin{fact}
\label{fact:decompositon}
The sequence $\cbr{(Y^{i},r_{i})}_{i\geq0}$ is i.i.d. and the random variables $r_{i}$ and $M^{i}\eqdef\sup_{s\leq r_{i}}Y^{i}(s)$ have tails which decay exponentially fast.
\end{fact}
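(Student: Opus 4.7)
The plan is to prove the i.i.d.\ property by a direct application of the strong Markov property, and to obtain the exponential tails of $r_0$ and $M^0$ by reducing them, respectively, to hitting-time tails and to a union bound over unit intervals.

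For the distributional claim, note that each $\rho_i$ is a stopping time and, by construction, $Y_{\rho_i}=0$ (with the standing convention $Y_0=0$, so that this also holds for $i=0$). By the strong Markov property stated in Fact~\ref{fact:OUfact}(1), the shifted path $t\mapsto Y_{\rho_i+t}$ is then an Ornstein-Uhlenbeck process started from $0$ and independent of $\mathcal{F}_{\rho_i}$. Since $r_i$ is the first return to $0$ after $\pm 1$ of this shifted path, and $Y^i$ is merely its restriction to $[0,r_i]$, the pair $(Y^i,r_i)$ is a measurable functional of the post-$\rho_i$ trajectory alone. This simultaneously gives independence across $i$ and equality in distribution with $(Y^0,r_0)$.

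For the tail of $r_0$, I would decompose $r_0 = T_1 + T_2$ where $T_1\eqdef\inf\{t\ge 0:|Y_t|=1\}$ and $T_2\eqdef\inf\{t\ge T_1:Y_t=0\}-T_1$. Then $T_1$ is bounded by the minimum of two single-point hitting times for the OU started from $0$, and, by the strong Markov property at $T_1$, $T_2$ is the hitting time of $0$ for an OU started from $Y_{T_1}\in\{-1,1\}$; Fact~\ref{fact:OUfact}(2) gives each of these an exponential tail, and hence so does their sum via $\pr{r_0>t}\le\pr{T_1>t/2}+\pr{T_2>t/2}$. For the tail of $M^0$, fix $\alpha>0$ and write
\[
\pr{M^0>x}\le\pr{r_0>\alpha x}+\pr{\sup_{s\le\alpha x}Y_s>x}.
\]
The first term is exponentially small by the previous step. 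For the second, cover $[0,\alpha x]$ by unit intervals and, on each $[i,i+1]$, use Fact~\ref{fact:OUfact}(3) together with the Markov property at time $i$ to write $Y_{i+t}=e^{-\mu t}Y_i+\tilde{Y}^{(i)}_t$ with $\tilde{Y}^{(i)}$ an OU from $0$ independent of $Y_i$. Then $\sup_{s\in[i,i+1]}Y_s\le|Y_i|+\sup_{t\le1}\tilde{Y}^{(i)}_t$, and both summands have subgaussian tails by Fact~\ref{fact:OUfact}(1) and~(4). A union bound over the $O(x)$ intervals then yields a bound of the form $Cxe^{-cx^2}$ for the second term, which decays super-exponentially. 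Combining both contributions gives the desired exponential decay of $\pr{M^0>x}$.

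The only mildly delicate point is the uniform-in-$i$ union bound in the $M^0$ step, but the Gaussian (in fact subgaussian) concentration provided by Fact~\ref{fact:OUfact}(4) leaves plenty of room, so this is bookkeeping rather than a genuine obstacle; the strong Markov property does all of the substantive work.
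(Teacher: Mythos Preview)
Your proposal is correct and follows essentially the same route as the paper: the i.i.d.\ claim via the strong Markov property at $\rho_i$, the decomposition $r_0=T_1+T_2$ into the hitting time of $\pm1$ and the subsequent return to $0$ (the paper writes these as $\tilde\rho$ and $\rho_1-\tilde\rho$), and the splitting $\pr{M^0>x}\le\pr{r_0>\alpha x}+\pr{\sup_{s\le\alpha x}Y_s>x}$ followed by a union bound over unit intervals with Fact~\ref{fact:OUfact}(4). The paper simply takes $\alpha=1$ and is terser about the last step, but the argument is the same.
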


\begin{proof}
The first statement follows by the fact that $Y_{\rho_{i}}=0$ and the strong Markov property applied to~$Y$. We define $\tilde{\rho}\eqdef\inf\cbr{t\geq0:|Y_{t}|=1}$ then $\rho_{1}=\inf\cbr{t\geq\tilde{\rho}:Y_{t}=0}$. 

By point 2 of Fact \ref{fact:OUfact} both $\tilde{\rho}$ and $\rho_{1}-\tilde{\rho}$ have exponential tails, thus $r_{1}=\rho_{1}$ has an exponential tail. For $x\geq0$ we have 
\[
  \pr{M^{i}\geq x}\leq\pr{\sup_{s\leq x}Y^{i}(s)\geq x}+\pr{r_{i}\geq x}\leq\sum_{k=1}^{\lceil x\rceil}\pr{\sup_{s\in[k-1,k]}Y^{i}(s)\geq x}+\pr{r_{i}\geq x}.
\]
We deduce that $M^{i}$ has an exponential tail using point 4 of Fact \ref{fact:OUfact}. 
\end{proof}

\subsection{A modified version of Theorem \ref{thm:OUresult}}

In this section, we denote by $\pp_x$ the law of $(X,Y)$ such that $X_0=x$ and $Y_0=0$. In a first time, we study the asymptotic behaviour of $\log  \pp(\forall_{u\in[0,\rho_{n}]}X_{u}\geq Y_{u}|Y)$ as $n \to +\infty$, using Kingman's subadditive ergodic theorem.
\begin{lem}
\label{lem:existsLimit}
We assume that $Y_0=0$. For any $0 < a < b \leq +\infty$, there exists $\tilde{\gamma}_{a,b}$ such that
\[
  \lim_{n \to +\infty} \frac{-\log \inf_{x\in (a,b)} \pp_x(\forall_{u\in[0,\rho_{n}]}X_{u}\geq Y_{u}, X_{\rho_n}\in (a,b)|Y)}{\log n} = \tilde{\gamma}_{a,b} \quad \text{a.s. and in } L^1.
\]
\end{lem}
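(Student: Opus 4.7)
The plan is to apply Kingman's subadditive ergodic theorem to the random sequence
\[
  a_n(Y) \eqdef -\log\inf_{x\in(a,b)}\pp\rbr{\forall_{u\in[0,\rho_{n}]}X_{u}\geq Y_{u},\,X_{\rho_n}\in(a,b)\mid Y,X_{0}=x},
\]
indexed by the iid excursion decomposition $(Y^{i},r_{i})_{i\geq 0}$ of $Y$ provided by Fact \ref{fact:decompositon}. I expect to obtain convergence of $a_n(Y)/n$ to a deterministic constant $\tilde\gamma_{a,b}$.

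\textbf{Subadditivity.} I would first verify $a_{n+m}(Y)\leq a_n(Y)+a_m(\theta^n Y)$, where $\theta^n$ denotes the shift of $Y$ by $\rho_n$ (equivalently, by $n$ excursions). At the stopping time $\rho_n$ we have $Y_{\rho_n}=0$, so by the strong Markov property of $(X,Y)$ the continuation of the pair is distributed as an independent copy starting from $(X_{\rho_n},0)$ driven by the shifted environment $Y'\eqdef\theta^n Y$. On the event $\{X_{\rho_n}\in(a,b)\}$ the continuation probability is thus bounded below by $\inf_{x'\in(a,b)}\pp(\text{event for }m\mid Y',X_0=x')$, which is $Y'$-measurable and so can be pulled out of the conditional expectation. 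Multiplying by $\pp(\text{event for }n\mid Y,X_0=x)$ and taking $-\log$ after the infimum over $x\in(a,b)$ yields the required subadditivity.

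\textbf{Integrability of $a_1$.} By the FKG inequality (Corollary \ref{cor:conditionalFKG}) applied conditionally on $Y$ to the two increasing events $\{\forall_{u\leq\rho_1}X_u\geq Y_u\}$ and $\{X_{\rho_1}\in(a,b)\}$, it suffices to control each factor separately. Fact \ref{fact:boundedMoments}, part 2 handles the first factor, while for the second one I would use Fact \ref{fact:OUfact}, part 1 (the explicit Gaussian transition density of the OU process) to obtain a bound of the form $-\log\pp(X_{\rho_1}\in(a,b)\mid Y,X_0=x)\leq C(1+x^2+\rho_1)$ uniformly in $x\in(a,b)$, and then integrate using the exponential tail of $\rho_1$ from Fact \ref{fact:decompositon}.

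\textbf{Applying Kingman.} The shift $\theta$ acts on the canonical space of iid excursions $(Y^{i},r_{i})$ as a one-sided Bernoulli shift, hence is measure-preserving and ergodic. Kingman's subadditive ergodic theorem then produces a deterministic constant $\tilde\gamma_{a,b}\geq 0$ such that $a_n(Y)/n\to\tilde\gamma_{a,b}$ almost surely and in $L^1$.

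The main obstacle I anticipate is the careful handling of the infimum over starting points $x\in(a,b)$. In the subadditivity step it must survive the Markov restart; this works because the inner infimum is $Y'$-measurable and independent of $x$. In the integrability step one must check that the infimum does not introduce a boundary pathology: stochastic monotonicity in the starting point (formalisable either by a direct coupling or via Fact \ref{fact:StrongFKG}) reduces the matter to estimating at the boundary $x=a^+$, which brings us back to a concrete one-starting-point computation already covered by Fact \ref{fact:boundedMoments}.
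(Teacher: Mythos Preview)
Your approach is essentially the paper's: set up the doubly-indexed family $q_{m,n}=-\log p_{m,n}$, check subadditivity via the strong Markov property at $\rho_m$ using $Y_{\rho_m}=0$, read off stationarity and ergodicity from the i.i.d.\ excursion structure of Fact~\ref{fact:decompositon}, verify $\E q_{0,1}<\infty$ via Fact~\ref{fact:boundedMoments}, and invoke Kingman. The subadditivity and ergodicity steps are correct and match the paper.

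There is, however, a genuine slip in your integrability argument. When $b<+\infty$ the event $\{X_{\rho_1}\in(a,b)\}$ is \emph{not} increasing with respect to $\prec$, so Corollary~\ref{cor:conditionalFKG} does not apply and you cannot decouple the two factors as you propose. The same issue undermines your final paragraph: the map $x\mapsto\pp(\forall_{u\leq\rho_1}X_u\geq Y_u,\,X_{\rho_1}\in(a,b)\mid Y,X_0=x)$ is not monotone in $x$ when $b<+\infty$, so stochastic monotonicity does not reduce the infimum to $x=a^+$. The paper explicitly flags that the FKG reduction to $x=a$ is only valid for $b=+\infty$, and for general $b$ appeals to Fact~\ref{fact:boundedMoments} rather tersely. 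One workable repair is to bound the joint probability below by conditioning on the endpoint (using the explicit OU bridge and transition density from Fact~\ref{fact:OUfact}) and show that $\sup_{x\in(a,b)}\bigl[-\log\pp(\cdot\mid Y,X_0=x)\bigr]$ is dominated by $C(1+\rho_1+\sup_{s\leq\rho_1}Y_s^{2})$, which has finite expectation by Fact~\ref{fact:decompositon}; but FKG is not the tool for this step.
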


\begin{proof}
Let $0<a<b \leq +\infty$, we set $I=(a,b)$. For any $0 \leq m < n$, we set
\begin{equation}
  p_{m,n}\eqdef\inf_{x\in (a,b)} \pp(\forall_{u\in[\rho_{m},\rho_{n}]}X_{u}\geq Y_{u}, X_{\rho_{n}}\in I|Y,X_{\rho_{m}}=x) \label{eq:pnm}
\end{equation}
and $q_{m,n} = -\log p_{m,n}$. By Markov inequality applied to $X$, we have
\[
  p_{m,n} = \inf_{x\in (a,b)} \pp_x(\forall_{u\in[0,\rho_{n}-\rho_m]}X_{u}\geq Y_{u+\rho_m}, X_{\rho_{n}-\rho_m}\in I|Y)
\]
We observe that the process $X$ under $\pp_x$ has the same law as $(X_t + xe^{-\mu_t})$ under $\pp_0$. Therefore, from the FKG inequality (\ref{eq:FKG}), if $b=+\infty$ then the minimal value of $p_{m,n}$ is attained at $x=a$.

We prove that $\cbr{q_{m,n}}_{n>m\geq1}$ fulfils the assumptions of Kingman's subadditive ergodic theorem as stated in \cite[Theorem 9.14]{Kallenberg:1997fk}. By the Markov property, as $Y_{\rho_{n}}=0$ for any $1 \leq m<n$ we have 
\[
  p_{0,n}=p_{0,m}\pp(\forall_{u\in[\rho_{m},\rho_{n}]}X_{u}\geq Y_{u}, X_{\rho_{n}}\in I|Y,\forall_{u\in[0,\rho_{m}]}X_{u}\geq Y_{u},X_{\rho_{m}}\in I)\geq p_{0,m}p_{m,n},
\]
thus $q_{0,n}\leq q_{0,m}+q_{m,n}$, which is the subadditivity condition \cite[(9.9)]{Kallenberg:1997fk}.

We fix $k \geq 1$. We recall that $\cbr{Y^l}_{l \geq 0}$ is i.i.d. Consequently the sequence
\begin{equation}
  \cbr{q_{lk,(l+1)k}}_{l\geq0}\label{eq:qkln}
\end{equation}
is i.i.d. and condition \cite[(9.7)]{Kallenberg:1997fk} is fulfilled. Further, condition \cite[(9.8)]{Kallenberg:1997fk} follows by the fact that the process $\cbr{Y_{t+\rho_{k}}}_{t\geq0}$ is an Ornstein-Uhlenbeck process distributed as $Y$. As $q_{0,n}\geq 0$; Lemma \ref{lem:boundedMoments} implies that $\E q_{0,1}<+\infty$ thus \cite[Theorem 9.14]{Kallenberg:1997fk} applies and 
\begin{equation}
  \lim_{n\to+\infty}\frac{-\log p_{0,n}}{n}=\lim_{n\to+\infty}\frac{q_{0,n}}{n}=:\tilde{\gamma}_{a,b},\quad\text{a.s.}\:\text{and }L^{1}.\label{eq:basicConvergence}
\end{equation}
The constant $\tilde{\gamma}_{a,b}$ is non-random since (\ref{eq:qkln}) is ergodic.
\end{proof}

Now we prove that the constant $\tilde{\gamma}$ does not depend on $(a,b)$.
\begin{lem}
There exists $\tilde{\gamma}>0$ such that for any $0< a < +\infty$ we have $\tilde{\gamma} = \tilde{\gamma}_{a,+\infty}$.
\end{lem}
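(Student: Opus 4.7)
The plan is to establish both $\tilde{\gamma}_{a',+\infty}\leq\tilde{\gamma}_{a,+\infty}$ and $\tilde{\gamma}_{a,+\infty}\leq\tilde{\gamma}_{a',+\infty}$ for any $0<a<a'$, which will force $\tilde{\gamma}_{a,+\infty}$ to be constant on $(0,+\infty)$. Both inequalities will come from inserting one ``extra'' $Y$-excursion (either at the end or at the start) between the two settings, exploiting the strong Markov property of $(X,Y)$ at the stopping times $\rho_k$ (conveniently $Y_{\rho_k}=0$) together with the monotonicity of $q_n(x,I)\eqdef\pp(\forall_{u\leq\rho_n}X_u\geq Y_u,X_{\rho_n}\in I\mid X_0=x,Y)$ in its starting point $x$, which follows from the synchronous coupling of Ornstein--Uhlenbeck processes driven by a common Brownian motion.

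For the first inequality I apply the strong Markov property of $X$ at time $\rho_n$. On $\{X\geq Y\text{ on }[0,\rho_n],X_{\rho_n}>a\}$, the conditional probability of $\{X\geq Y\text{ on }[\rho_n,\rho_{n+1}],X_{\rho_{n+1}}>a'\}$ given $X_{\rho_n}$ is non-decreasing in $X_{\rho_n}$, so it is bounded below by its value at $X_{\rho_n}=a$, which I denote $g_n(Y)$. This yields $q_{n+1}(x,(a',+\infty))\geq g_n(Y)\,q_n(x,(a,+\infty))$ for every $x>a'$. Taking $x\to(a')^+$ and using monotonicity once more in the starting point gives
\[
p_{0,n+1}(a',+\infty)\geq g_n(Y)\,p_{0,n}(a,+\infty).
\]
Since the excursions $\{Y^n\}$ are i.i.d.\ (Fact~\ref{fact:decompositon}), so is $\{g_n(Y)\}$; moreover Fact~\ref{fact:boundedMoments} gives $\E[-\log g_n(Y)]<+\infty$, hence $-\log g_n(Y)/n\to 0$ a.s.\ by Borel--Cantelli. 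Taking $-\log$, dividing by $n$ and passing to the limit proves $\tilde{\gamma}_{a',+\infty}\leq\tilde{\gamma}_{a,+\infty}$.

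The reverse inequality is symmetric: I apply the Markov property at $\rho_1$ and lower bound $q_n(x,(a,+\infty))$ by requiring first an initial excursion on $[0,\rho_1]$ that stays above $Y$ and ends above $a'$ (of probability $\eta_x(Y)$), followed by the analogous event on $[\rho_1,\rho_n]$ with threshold $a'$. Using monotonicity in the starting point together with the containment $(a',+\infty)\subset(a,+\infty)$ gives
\[
p_{0,n}(a,+\infty)\geq \eta_{a+}(Y)\,\tilde{p}_{0,n-1}(a',+\infty),
\]
where $\tilde{p}$ is built from the shifted process $\tilde{Y}=Y_{\rho_1+\cdot}$, itself an Ornstein--Uhlenbeck process started at $0$ by the strong Markov property. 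Applying Lemma~\ref{lem:existsLimit} to $\tilde{Y}$ yields $-\log\tilde{p}_{0,n-1}(a',+\infty)/n\to\tilde{\gamma}_{a',+\infty}$ a.s., whence $\tilde{\gamma}_{a,+\infty}\leq\tilde{\gamma}_{a',+\infty}$.

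The main technical point is to verify that $g_n(Y)$ and $\eta_{a+}(Y)$ are a.s.\ positive with logarithm of finite mean. I plan to split each via FKG into a \emph{staying-above-$Y$} factor, to which Fact~\ref{fact:boundedMoments}, part 2, applies thanks to the exponential tail of the excursion length (Fact~\ref{fact:decompositon}), and a \emph{Gaussian-endpoint} factor, whose log-integrability follows from the Gaussian transition density of $X$ combined with the super-exponentially small probability that a $Y$-excursion is short (needed to control the divergence of $-\log\pp(X_{r_n}>a'\mid r_n)$ as $r_n\to 0^+$).
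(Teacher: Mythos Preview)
Your proposal is correct and follows essentially the same strategy as the paper: exploit the strong Markov property at the stopping times $\rho_k$ (where $Y_{\rho_k}=0$), use monotonicity of the survival probability in the starting point (via synchronous coupling/FKG), and insert one extra $Y$-excursion to pass between different parameter choices. The paper's inequality \eqref{eq:conditioning} is exactly your ``reverse inequality'' step.

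The only organizational difference is in the other direction. You bound $p_{0,n+1}(a',+\infty)\geq g_n(Y)\,p_{0,n}(a,+\infty)$ by appending an excursion at the end, which forces you to control $\E[-\log\pp(X_{\rho_1}>a'\mid X_0=a,\rho_1)]$ and hence to invoke the $e^{-c/\epsilon}$ decay of $\pp(\rho_1<\epsilon)$. The paper instead first fixes the starting point $x$ and compares the endpoint constraints directly: writing $d_n=n^{-1}\bigl(q_n(x,x)-q_n(x,0)\bigr)$, FKG gives
\[
d_n\leq -n^{-1}\log\pp\bigl(X_{\rho_n}\geq x\,\bigm|\,X_0=x,\rho_n\bigr),
\]
which is bounded (since $\rho_n\to+\infty$ and $X_{\rho_n}$ is then close to stationary), so $d_n\to 0$ without any small-time analysis. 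This is slightly more economical, but your route is equally valid; the super-exponential decay of $\pp(\rho_1<\epsilon)$ you identify does indeed dominate the $\asymp 1/\epsilon$ blow-up of the Gaussian endpoint factor, so the integrability you need goes through.
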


\begin{proof}
For any $a \geq 0$ and $x > 0$, we write
\[
  p_n(x,a) \eqdef \pp_x(\forall_{u\in[0,\rho_{n}]}X_{u}\geq Y_{u}, X_{\rho_{n}}>a|Y),
\]
and accordingly $q_n(x,a)\eqdef -\log p_n(x,a)$. We prove that 
\begin{equation}
	\lim_{n\to+\infty} \frac{q_n(x,a)}{n} = \tilde{\gamma},\quad\text{a.s.}\:\text{and }L^{1}, \label{eq:limitAll}
\end{equation}
exists and is independent of $x>0,a\geq 0$. Fix $x>0$, by \eqref{eq:basicConvergence}, we know that
\[
  \lim_{n \to +\infty} \frac{q_n(x,x)}{n} = \tilde{\gamma}_{x,+\infty},\quad\text{a.s.}\:\text{and }L^{1},
\]
as the minimum in \eqref{eq:pnm} is attained in $x=a$. We prove that $p_n(x,0,+\infty)$ behaves similarly. As $p_n(x,x)\leq p_n(x,0,+\infty)$, we have
\begin{align*}
  0\leq d_{n} & :=\frac{q_n(x,x,+\infty)}{n}-\frac{q_n(x,0,+\infty)}{n}=-\frac{1}{n}\log\frac{p_n(x,x,+\infty)}{p_{n}(x,0,+\infty)}\\
  & =-n^{-1}\log\pp_x\sbr{X_{\rho_{n}}\geq x|\forall_{u\in[\rho_{0},\rho_{n}]}X_{u}\geq Y_{u},Y}\\
  &\leq-n^{-1}\log\pp_x\sbr{X_{\rho_{n}}\geq x|Y},
\end{align*}
by the FKG inequality. We conclude easily that $d_n \to 0$ a.s. and in $L^1$. By a simple monotonicity argument we conclude that convergence \eqref{eq:limitAll} holds for any pair $(x,a)$, when $x>0$ and $a\in[0,x]$ and the limit depends only on $x$.

We now fix $x_1>x_2>0$, we have $p_n(x_1,0,+\infty) \leq p_n(x_2,0,+\infty)$. On the other hand 
\begin{equation}\label{eq:conditioning}
  \frac{q_n(x_2,0,+\infty)}{n}\leq\frac{-\log\pp_{x_2}\sbr{\forall_{s\leq\rho_{1}}X_{s}\geq Y_{s},X_{\rho_{1}}\geq x_{1}|Y}}{n} + \frac{q_{n-1}(x_1,0,+\infty)}{n}.
\end{equation}
This proves that $\tilde{\gamma} = \tilde{\gamma}_{x,+\infty}$ does not depend of $x$.
\end{proof}

\begin{lem}\label{lem:limitIndependence}
For any $0 < a < b \leq +\infty$, we have $\tilde{\gamma} = \tilde{\gamma}_{a,b}$
\end{lem}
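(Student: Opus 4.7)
The trivial inequality $\tilde{\gamma}_{a,b} \geq \tilde{\gamma}_{a,+\infty} = \tilde{\gamma}$ follows from the inclusion $(a,b) \subset (a,+\infty)$ and the previous lemma. My plan for the reverse inequality is in two stages; the second is where the real work lies.

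\emph{Stage 1: show that the exponent does not depend on the right endpoint.} I will fix $a < c < b \leq +\infty$ and use an extra-excursion argument in the spirit of the previous lemma. By the Markov property at $\rho_n$, using $Y_{\rho_n}=0$ and the independence of $Y^{n+1}$ from $Y|_{[0,\rho_n]}$,
\begin{equation*}
  p_{n+1}(x,a,b) \;\geq\; p_n(x,a,c) \cdot \inf_{z \in [a,c]} \pr{\forall_{u \leq r_n} X_u \geq Y^{n+1}(u),\, X_{r_n} \in (a,b) \mid Y^{n+1},\, X_0 = z}.
\end{equation*}
I will check that this infimum is a.s.\ strictly positive (as a continuous positive function of $z$ on the compact $[a,c]$) and that its negative logarithm is integrable (bounds in the spirit of Fact~\ref{fact:boundedMoments} control the inner probability in terms of the length and supremum of $Y^{n+1}$, both having exponential tails by Fact~\ref{fact:decompositon}). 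Dividing by $n+1$ and letting $n \to +\infty$ yields $\tilde{\gamma}_{a,b} \leq \tilde{\gamma}_{a,c}$. A symmetric version of the argument, with endpoint constraint $X_{r_n} \in (a,c)$ and the infimum taken over $z \in [a,M]$ for an arbitrary finite $M > a$, gives $\tilde{\gamma}_{a,c} \leq \tilde{\gamma}_{a,M}$. Combined with the trivial monotonicity of $b \mapsto \tilde{\gamma}_{a,b}$, this will show that the function is constant on $(a,+\infty)$; I will call the common value $\alpha(a) \geq \tilde{\gamma}$.

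\emph{Stage 2: match $\alpha(a)$ with $\tilde{\gamma}$.} It suffices to exhibit some $M_0 > a$ for which
\begin{equation*}
  p_n(x, a, M_0) \;\geq\; \tfrac{1}{2}\, p_n(x, a, +\infty) \quad \text{for all $n$ large, $Y$-a.s.},
\end{equation*}
since this yields $\alpha(a) = \tilde{\gamma}_{a, M_0} \leq \tilde{\gamma}_{a, +\infty} = \tilde{\gamma}$. Equivalently, I must establish a quenched tightness statement: the conditional law of $X_{\rho_n}$ given survival does not escape to $+\infty$ uniformly in $n$. The driving heuristic is the contractive drift of the OU process. My plan is to work on the last excursion $[\tau_{n-1}, \rho_n]$: applying the strong Markov property at $\tau_{n-1}$, the Gaussian concentration of the OU supremum over a unit interval (Fact~\ref{fact:OUfact}(4)), and the strong FKG inequality (Fact~\ref{fact:StrongFKG}), one can bound $\pr{X_{\rho_n} > M_0,\, \text{survival} \mid Y}$ in terms of the survival probability by a factor that vanishes as $M_0 \to +\infty$. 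The i.i.d.\ structure of the excursion blocks (Fact~\ref{fact:decompositon}) then allows iterating this control between successive excursions to obtain the required uniformity in $n$.

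\emph{The main obstacle} is Stage 2: making the tightness quantitative in the quenched setting. The quenched survival probability $\pr{\forall X \geq Y \mid Y}$ fluctuates strongly with $Y$, so any joint bound on $\pr{X_{\rho_n} > M_0,\, \text{survival} \mid Y}$ must be carefully normalized by the survival probability at each step of the iteration, without losing the uniformity in $n$ needed to conclude.
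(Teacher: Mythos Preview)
Your Stage 1 is essentially sound (modulo some care: $\tilde{\gamma}_{a,b}$ is defined via an infimum over starting points in $(a,b)$, so when you chain two intervals the starting and ending windows must be matched consistently), but as you correctly note it only identifies the finite-$b$ values with one another, leaving Stage~2 to connect this common value to $\tilde{\gamma}_{a,+\infty}=\tilde{\gamma}$.

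Stage~2 is where the real gap lies. The quenched tightness you want---that $X_{\rho_n}$ conditioned on survival above $Y$ stays in a bounded window with probability at least $1/2$, uniformly in $n$---is true but is not delivered by ``one last excursion plus Gaussian concentration''. A single excursion has random length $r_{n-1}$, which may be short, and then the OU drift has no time to contract; moreover the barrier on that excursion is the random path $Y^{n-1}$, not a deterministic Lipschitz curve, so Fact~\ref{fact:OUfact}(4) does not apply directly. Iterating over blocks as you propose amounts to proving a concentration inequality for the OU process conditioned to stay above a barrier---precisely the content of Lemma~\ref{lem:uniformTightness}, which the paper develops \emph{later}, in Section~\ref{sec:relevantDisorder}, with non-trivial work and under specific control on the barrier. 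So your Stage~2 plan is correct in spirit but is circular in placement and substantially incomplete as written; you have identified the obstacle but not how to overcome it.

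The paper takes a different and more elementary route that avoids tightness altogether. It decomposes the path into three phases: (i) survive the first $n_0 = \lceil n - C_1\log n\rceil$ excursions and land in the very wide window $(a,n)$; (ii) use the next $\sim C_1\log n$ excursions to shrink the window to $(a,\log n)$; (iii) use one final excursion to land in $(a,b)$. The key observation is that $C_1\log n$ units of time suffice, by the exponential contraction of the OU drift, to bring the process from height $n$ down to height $\log n$, so that phase~(ii) costs only $o(n)$ on the log scale (this is the bound on $p_2(n)$, controlled via \eqref{eq:basicConvergence} and a crude drift estimate). Phase~(iii) then costs at most $e^{-(\log n)^3}$ on an event of fixed positive probability. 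Everything reduces to the already-proved $\tilde{\gamma}_{a,+\infty}=\tilde{\gamma}$ and elementary OU facts; no conditional concentration is needed. Your approach would ultimately work too, but only after importing machinery the paper has not yet built at this point.
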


\begin{proof}
Using the previous lemma, we set $\tilde{\gamma} = \gamma_{a,+\infty}$ for any $a>0$. To show the claim it is enough to prove that for any $b<+\infty$ the limit cannot be smaller. We define $n_0 = \lceil n -C_1\log n \rceil$ for $C_1>1$ to be fixed later and $n_1 = n-1$. Using the Markov property we decompose
\[
  \pp_x(\forall_{u\leq\rho_{n}}X_{u}\geq Y_{u}, X_{\rho_{n}} \in (a,b)|Y) \geq p_1(n) p_2(n) p_3(n),
\]
where
\begin{align*}
  p_1(n) &\eqdef \inf_{x\in (a,b)}\pp_x(\forall_{u\leq\rho_{n_0}}X_{u}\geq Y_{u}, X_{\rho_{n_0}} \in (a ,n)|Y),\\
  p_2(n) &\eqdef \inf_{x\in[a,n]}\pp(\forall_{u\in[\rho_{n_0},\rho_{n_1}]}X_{u}\geq Y_{u}, X_{\rho_{n_1}} \in (a,\log n)|Y,X_{\rho_{n_0}}=x),\\
  p_3(n) &\eqdef \inf_{x\in [a,\log n]} \pp(\forall_{u\in[\rho_{n_1},\rho_{n}]}X_{u}\geq Y_{u}, X_{\rho_{n}} \in (a,b)|Y,X_{\rho_{n_1}}=x).
\end{align*}
We prove that 
\begin{equation}
  \lim_{n\to+\infty } \frac{-\log p_1(n)}{n} = \tilde{\gamma}, \quad \lim_{n\to+\infty } \frac{-\log p_2(n)}{n} = 0,  \quad \liminf_{n\to+\infty } \pp \left( -\log p_3 \leq n^{1/2}\right)>0,
  \label{eq:smallAim2}
\end{equation}
where the first two convergences hold in probability. This limit and \eqref{eq:basicConvergence} imply the claim of the lemma. We first treat the second convergence. We have
\begin{align*}
  p_2(n) &\geq \inf_{x\in[a,n]}\pp_x(\forall_{u\in[0,\rho_{n_1}-\rho_{n_0}]}X_{u}\geq Y_{\rho_{n_0}+u}, X_{\rho_{n_1}-\rho_{n_0}}\geq a|Y) - \sup_{x\in [a,n]} \pp_x( X_{\rho_{n_1}-\rho_{n_0}}\geq \log n|Y)\\
  & \geq \pp_a(\forall_{u\in[0,\rho_{n_1}-\rho_{n_0}]}X_{u}\geq Y_{u+\rho_{n_0}}, X_{\rho_{n_1}-\rho_{n_0}}\geq a|Y) - \pp_n( X_{\rho_{n_1}-\rho_{n_0}}\geq \log n|Y),
\end{align*}
using the Markov property and the FKG inequality. Using \eqref{eq:basicConvergence} we get that the logarithm of the first term is $\approx C_1 \tilde{\gamma} \log n$. Now we fix $C_1$ large enough so that $\E{} X_{\rho_{n_1} -\rho_{n_0}} \leq 1$. This can be established using property 3 of Fact \ref{fact:OUfact}. Further property 1 implies that $\log \pp_n( X_{\rho_{n_1}-\rho_{n_0}}\geq \log n|Y) \leq - c_1 \log^2 n$, for some $c_1>0$. We conclude that for large $n$ the second term is negligible and thus for some $C>0$
\[
	\lim_{n\to +\infty}\pp\rbr{\frac{-\log p_2(n)}{ \log n} \geq C} = 0.
\]
%
This yields the second convergence in \eqref{eq:smallAim2}. An analogous proof gives the first one. For the last one we consider an event $\mathcal{A}_n \eqdef \{\rho_{n}-\rho_{n_1} \in [1,2], \sup_{s\in [\rho_{n_1}, \rho_n]} |Y_s| \leq a/2 \}$. Clearly,
\[
  p_3(n) \geq \inf_{x\in [1,\log n]} \pp_x(X_{\rho_{n}-\rho_{n,1}} \in (a,b)|Y) \pp_a(\forall_{u\in[0,\rho_{n}-\rho_{n_1}]}X_{u}\geq Y_{u+\rho_{n_1}}|Y,X_{\rho_{n}-\rho_{n_1}}=a).
\]
Conditionally on $\mathcal{A}_n$ the second term is bounded from below by a constant and the first one by $\exp(-(\log n)^3)$. We conclude that for large $n$ there is $\pp(-\log p_3 \leq n^{1/2}) \geq \pp(\mathcal{A}_n)$. This finishes the proof as the right-hand side is non-zero and does not depend on $n$.
\end{proof}

Finally, we prove the limit in Lemma \ref{lem:existsLimit} holds for any starting position.
\begin{lem}
For any $x > y$  and $0\leq a < b \leq +\infty$, we have
\[
  \lim_{n \to +\infty} \frac{-\log \pp_x(\forall_{u\in[0,\rho_{n}]}X_{u}\geq Y_{u}, X_{\rho_{n}} \in (a,b)|Y, Y_0 = y)}{\log n} = \tilde{\gamma} \quad \text{a.s and in } L^1,
\]
\end{lem}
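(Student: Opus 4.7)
The strategy is to reduce the statement to the already-established case $Y_{0}=0$ by conditioning at the first time $Y$ hits zero. Let $T \eqdef \inf\{t \geq 0 : Y_{t}=0\}$; since $Y$ is an Ornstein--Uhlenbeck process, Fact~\ref{fact:OUfact}(2) gives $T<+\infty$ almost surely and $\E e^{\lambda T}<+\infty$ for $\lambda>0$ small. Given $Y$, the time $T$ is a deterministic number. Write $\tilde Y_{s}\eqdef Y_{T+s}$; this is realization-wise an OU process starting at $0$, and denote by $\{\tilde\rho_{k}\}_{k\geq 0}$ its excursion-renewal times defined as in Section~\ref{sub:Path-decomposition}. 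A short case analysis based on whether $\{|Y_{s}|=1\}$ is reached before $T$ shows that $\rho_{n}-T=\tilde\rho_{n-\kappa}$ with $\kappa\in\{0,1\}$, so in particular $n-\kappa\to+\infty$. Writing
\[
  P_{n}\eqdef\pp(\forall_{u\in[0,\rho_{n}]}X_{u}\geq Y_{u},\, X_{\rho_{n}}\in(a,b)\mid Y,X_{0}=x,Y_{0}=y),
\]
the aim is to sandwich $P_{n}$ between quantities behaving as $e^{-\tilde\gamma n+o(n)}$.

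For the upper bound, drop the constraint on $[0,T]$ and apply the Markov property of $X$ at the (deterministic given $Y$) time $T$:
\[
  P_{n}\leq\int\pp(X_{T}\in\dd z\mid X_{0}=x)\,Q_{n}(z,Y),\qquad Q_{n}(z,Y)\eqdef\pp(\forall_{u\in[T,\rho_{n}]}X_{u}\geq Y_{u},\,X_{\rho_{n}}\in(a,b)\mid Y,X_{T}=z).
\]
After the time shift, $Q_{n}(z,Y)$ is precisely the quenched quantity of the preceding lemma applied to $\tilde Y$ and starting position $z$ (with index $n-\kappa$). Since $\tilde Y$ has the same distribution as $Y$ starting from $0$, the preceding lemma yields $-\log Q_{n}(z,Y)/n \to \tilde\gamma$ almost surely for every $z>0$.

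For the lower bound, fix $x_{0}\in(0,x)$ and insert the intermediate event $\{X_{T}\in[x_{0},x_{0}+1]\}$:
\[
  P_{n}\geq A(Y)\cdot\inf_{z\in[x_{0},x_{0}+1]}Q_{n}(z,Y),\qquad A(Y)\eqdef\pp(\forall_{u\in[0,T]}X_{u}\geq Y_{u},\,X_{T}\in[x_{0},x_{0}+1]\mid Y,X_{0}=x).
\]
The second factor still converges at rate $\tilde\gamma$ by the previous lemma, uniformly in $z$ on the compact interval (a standard FKG/stochastic monotonicity argument as in Lemma~\ref{lem:limitIndependence}). The factor $A(Y)$ is an $n$-independent positive random variable, so $-\log A(Y)/n\to 0$ a.s.

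The main obstacle is to upgrade the almost-sure convergence $-\log P_{n}/n\to\tilde\gamma$ to $L^{1}$: one direction follows from Fatou's lemma, while the other requires the integrability $\E|{-\log A(Y)}|<+\infty$. This is the analog of Fact~\ref{fact:boundedMoments}(2) for the starting condition $X_{0}=x>Y_{0}=y$ rather than $Y_{0}=0$, and is proved by the same scheme: decompose according to the exponential tail of $T$ and the Gaussian concentration of $\sup_{s\leq T}|Y_{s}|$ from Fact~\ref{fact:OUfact}(4), and estimate the quenched small-ball probability that an OU trajectory stays above the piece of $Y$ on $[0,T]$ and lands in $[x_{0},x_{0}+1]$ via the same Brownian-bridge scaling and product-over-dyadic-scales argument used in the proof of Fact~\ref{fact:boundedMoments}.
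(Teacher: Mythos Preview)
Your proposal and the paper address complementary halves of the lemma. The paper's own proof treats only the case $Y_0=0$ via a short monotonicity sandwich --- for $x\in(a,b)$ one has $q_n(x,a,b)\le\sup_{x'\in(a,b)}q_n(x',a,b)=q_{0,n}$, which converges by Lemmas~\ref{lem:existsLimit} and~\ref{lem:limitIndependence}; and $q_n(x,a,b)\ge q_n(x,a,+\infty)$ combined with \eqref{eq:limitAll} gives the other direction, with a one-step conditioning as in \eqref{eq:conditioning} to cover $x\notin(a,b)$ --- and then writes ``We prove this result assuming $Y_0=0$, leaving to the reader removing this condition.'' Your hitting-time reduction is precisely that reader's exercise. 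One caveat: your phrase ``already-established case $Y_0=0$'' overstates what the preceding lemmas give, since \eqref{eq:limitAll} handles only $b=+\infty$ and Lemma~\ref{lem:limitIndependence} only the infimum over starting points in $(a,b)$; the pointwise-in-$z$ statement for finite $b$ is exactly what the paper's sandwich supplies here.

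More substantively, your upper bound has a real gap. From $P_n\le\int\pp(X_T\in\dd z\mid X_0=x)\,Q_n(z,Y)$ and the pointwise convergence $-\log Q_n(z,Y)/n\to\tilde\gamma$ for each fixed $z>0$ you cannot conclude $\liminf_n(-\log P_n)/n\ge\tilde\gamma$: the law of $X_T$ puts an $n$-independent positive mass on every half-line $(M,\infty)$, while $\sup_{z>0}Q_n(z,Y)$ does not decay (for $z$ large compared with $e^{\mu_1\tilde\rho_{n-\kappa}}$ the Ornstein--Uhlenbeck path starting at $z$ stays above $\tilde Y$ with probability close to one). Thus the tail of the integral dominates $e^{-\tilde\gamma n}$ for large $n$ and the bound is vacuous. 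A clean repair is to truncate at an $n$-dependent level $M_n\asymp n^{1/2}$ so that $\pp(X_T>M_n)\le e^{-cn}$ is negligible, drop the upper endpoint to make $Q_n$ monotone in $z$, and control $Q_n(M_n,\,a,+\infty)$ by waiting $O(\log n)$ excursion blocks for the OU drift to erase the starting condition before invoking \eqref{eq:limitAll}; this works but needs a sentence or two beyond ``the preceding lemma yields''. Your lower bound and the $L^1$ discussion are fine.
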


\begin{proof}
We prove this result assuming $Y_0=0$, the case $Y_0 \neq 0$ being treated in a similar way. We write
\[
  q_n(x,a,b) = -\log \pp_x(\forall_{u\in[0,\rho_{n}]}X_{u}\geq Y_{u}, X_{\rho_{n}} \in (a,b)|Y)
\]
Using the two previous lemmas, we have
\[
  \limsup_{n\to_\infty} \frac{q_n(x,a,b)}{n} \leq \lim_{n\to+\infty} \frac{\sup_{x\in(a,b)} q_n(x,a,b)}{n} = \tilde{\gamma},
\]
as $\sup_{x\in(a,b)} q_n(x,a,b)=q_{0,n}$. Similarly, for any $x \geq a$, we have
\[  \liminf_{n \to +\infty} \frac{q_n(x,a,b)}{n} \geq \liminf_{n \to +\infty} \frac{q_n(x,a,+\infty)}{n}
  \geq \liminf_{n \to +\infty} \frac{q_n(a,a,+\infty)}{n} \geq \tilde{\gamma},
\]
by the FKG inequality. Finally, using a reasoning similar to \eqref{eq:conditioning}, a similar inequality holds for $x \leq a$.
Consequently, the convergence 
\begin{equation}
	\lim_{n\to+\infty} \frac{q_n(x,a,b)}{n} = \tilde{\gamma},\quad\text{a.s.}\:\text{and }L^{1}, \label{eq:limitAll2}
\end{equation}
holds for any $a,b,x$.
\end{proof}

\subsection{Existence and basic properties of \texorpdfstring{$\gamma$}{g}}

We now prove that \eqref{eq:gammaDef} holds. To this end we state an auxiliary fact, whose proof is postponed to the end of the section.
\begin{fact}
\label{fact:uniformIntegrability}
For any $C\geq0$, $a\geq C$ and $b\in(a,+\infty]$ the family of random variables $\cbr{H_{t}}_{t\geq0}$ defined by 
\begin{equation}
  H_{t}\eqdef\frac{-\log\mathbb{P}\rbr{\forall_{s\leq t}X_{s}\geq Y_{s}+C,X_t-Y_t\in (a,b)|Y}}{t}.
  \label{eq:Ht}
\end{equation}
is $L^{p}$-uniformly integrable for any $p\geq1$.
\end{fact}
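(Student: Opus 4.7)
My plan is to prove the stronger statement that there exists $\lambda > 0$ with $\sup_{t \geq 1} \E[\exp(\lambda H_t)] < +\infty$; this immediately yields $L^p$-uniform integrability for every $p \geq 1$, because it forces $\sup_t \E[H_t^q] < +\infty$ for every $q$ via the elementary bound $x^q \leq C_{q,\lambda} e^{\lambda x}$ for $x$ large.

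The strategy is to reduce $-\log P_t$ (where $P_t$ is the quenched probability inside $H_t$) to a sum of order $t$ i.i.d.\ contributions with exponential moments. Using the path decomposition of Section \ref{sub:Path-decomposition}, set $N(t) \eqdef \max\{n \geq 0 : \rho_n \leq t-1\}$. By Fact \ref{fact:decompositon} the $r_i$'s are i.i.d.\ with exponential tails, so $N(t)/t$ is tightly concentrated around $1/\E r_0$ and has uniform exponential moments. Conditionally on $Y$, applying the Markov property of $X$ at the stopping times $\rho_1,\ldots,\rho_{N(t)}$ together with the FKG inequality (Corollary \ref{cor:conditionalFKG}) on each slot, one obtains a lower bound of the form
\[
  P_t \geq \prod_{i=0}^{N(t)-1} q_i(Y^i) \cdot q_{\mathrm{end}},
\]
where $q_i(Y^i)$ is the quenched probability of the event $\{\forall_{s \in [\rho_i,\rho_{i+1}]} X_s \geq Y_s + C,\ X_{\rho_{i+1}} - Y_{\rho_{i+1}} \in [a+1, a+2]\}$ starting from $X_{\rho_i} - Y_{\rho_i} = a+1$, and $q_{\mathrm{end}}$ handles the final slot $[\rho_{N(t)}, t]$ together with the endpoint constraint $X_t - Y_t \in (a,b)$.

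By Fact \ref{fact:decompositon} the $Y^i$'s are i.i.d., so $Z_i \eqdef -\log q_i(Y^i)$ is an i.i.d.\ sequence. Fact \ref{fact:boundedMoments} part 2, after translating out the offset $C$ and the reference interval $[a+1,a+2]$, shows each $Z_i$ has exponential moments, and part 3 gives the same for $-\log q_{\mathrm{end}}$. A standard Chernoff-type computation conditioning on $N(t)$ then yields, for sufficiently small $\lambda$,
\[
  \E e^{\lambda H_t} \leq \E\Bigl[\bigl(\E e^{\lambda Z_0/t}\bigr)^{N(t)}\Bigr] \cdot \E e^{\lambda(-\log q_{\mathrm{end}})/t},
\]
which is bounded uniformly in $t$ since $\E e^{\lambda Z_0/t} = 1 + O(1/t)$ and $N(t)/t$ has uniform exponential moments.

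The main obstacle is the two-sided endpoint condition $X_t - Y_t \in (a,b)$ when $b < +\infty$: this event is not monotone with respect to the order $\prec$, so FKG cannot be invoked directly on the whole interval $[0,t]$. The workaround is to quarantine the endpoint in a final slot of bounded length whose contribution is controlled by a Brownian-bridge-type argument, which is essentially the content of Fact \ref{fact:boundedMoments} part 3; all earlier slots then only carry monotone upper-barrier constraints plus auxiliary bounded-interval conditions on $X_{\rho_i} - Y_{\rho_i}$, which combine cleanly via FKG and the Markov property.
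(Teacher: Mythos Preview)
Your overall strategy---bound $-\log P_t$ by a sum of i.i.d.\ contributions with exponential moments---matches the paper's, but you take a more complicated route and there are two genuine gaps.

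The paper decomposes $[0,t]$ into \emph{deterministic} unit intervals $[k,k+1]$ rather than the random excursion intervals $[\rho_i,\rho_{i+1}]$. It forces $X_k-Y_k\in(a,b)$ at \emph{every} integer $k$ (this is a sub-event of the one defining $p_t$), applies the Markov property with the worst-case starting point in $(a,b)$, and obtains
\[
  -\log p_n \;\leq\; q_0 + \sum_{k=1}^{n-1} q_k,\qquad
  q_k \eqdef -\log\inf_{x\in(a,b)}\pp\bigl(\forall_{s\in[k,k+1]}X_s\geq Y_s+C,\ X_{k+1}-Y_{k+1}\in(a,b)\,\big|\,Y,\,X_k-Y_k=x\bigr).
\]
Part~3 of Fact~\ref{fact:boundedMoments} gives exponential moments for each $q_k$; since the $q_k$ are identically distributed, Jensen's inequality yields $\E\bigl[(\tfrac1n\sum_k q_k)^p\bigr]\leq \E[q_1^p]<\infty$ for every $p$, and uniform integrability follows. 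No FKG is invoked, and there is no random number of slots to control.

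Your gaps are these. First, Fact~\ref{fact:boundedMoments} part~2 only asserts a finite \emph{first} moment, not exponential moments, so your claim that $Z_i=-\log q_i(Y^i)$ has exponential moments is not justified as written. Second, and more seriously, $N(t)$ is not independent of the $Z_i$'s: $N(t)$ is determined by $r_0,r_1,\ldots$, and each $Z_i$ is a functional of $Y^i$, which carries $r_i$. Hence the factorization $\E e^{\lambda H_t}\leq \E\bigl[(\E e^{\lambda Z_0/t})^{N(t)}\bigr]\cdot(\ldots)$ does not follow from ``conditioning on $N(t)$''; you would need a renewal/Wald-type argument for the stopped sum $\sum_{i<N(t)}Z_i$. (Similarly, part~3 is stated for a unit interval, whereas your terminal slot $[\rho_{N(t)},t]$ has random length.) All of this is repairable---e.g.\ sub-divide each $[\rho_i,\rho_{i+1}]$ into unit sub-intervals and invoke part~3, then control the random number of intervals via the exponential tail of $r_i$---but the paper's deterministic decomposition sidesteps these complications entirely.
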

\begin{proof}
Without loss of generality it is enough to work with integer times and assume that $a>C$. Denoting the probability in \eqref{eq:Ht} by $p_t$ we estimate
\begin{align*}
  -\log p_n & \leq-\log\mathbb{P}\rbr{\forall_{s\leq n}X_{s}\geq Y_{s}+C,\forall_{k\in\cbr{1,\ldots,n}}X_{k}-Y_k\in(a,b)|Y}\\
  & \leq-\log\mathbb{P}\rbr{\forall_{s\in[0,1]}X_{s}\geq Y_{s}+C,X_{1}-Y_1 \in (a,b)|Y}+\sum_{k\in\cbr{1,\ldots,n-1}}q_{k},
\end{align*}
where $q_{k}\eqdef-\log\sbr{\inf_{x\in(a,b)}\mathbb{P}\rbr{\forall_{s\in[k,k+1]}X_{s}\geq Y_{s}+C,X_{k+1}-Y_{k+1}\in (a,b) |Y,X_{k}-Y_k=x}}$.

By the Markov property the random variables $\cbr{q_{k}}_{k\geq1}$ are independent and identically distributed thus, by Lemma \ref{lem:boundedMoments}, the sequence $\cbr{\frac{1}{n}\sum_{k=1}^{n}q_{k}}_{n}$ is $L^{p}$-uniformly integrable. Further, the proof follows by standard arguments. 
\end{proof}

\begin{lem}
\label{lem:proofGammaDef}
For any $0 \leq a < b \leq +\infty$ and $X_0 > Y_0$, we have
\[
  \lim_{t \to +\infty} \frac{-\log\mathbb{P}\rbr{\forall_{s\leq t}X_{s}\geq Y_{s},X_t-Y_t\in (a,b)|Y}}{t} = \frac{\tilde{\gamma}}{\mathbb{E}(r_1)} \quad \text{a.s. and in } L^1.
\]
\end{lem}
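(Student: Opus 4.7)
The plan is to upgrade the discrete-scale convergence established in the preceding lemma to continuous time via the strong law of large numbers for the renewal sequence $\cbr{\rho_n}$. By Fact \ref{fact:decompositon}, $\cbr{r_i}$ is i.i.d.\ with finite mean $\E(r_1)\in(0,+\infty)$, so defining $N(t)\eqdef\sup\cbr{n\geq 0:\rho_n\leq t}$ we have $N(t)/t\to 1/\E(r_1)$ a.s. I prove the two matching bounds separately; the $L^p$ convergence then upgrades from the almost-sure convergence via the uniform integrability of Fact \ref{fact:uniformIntegrability}.

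\emph{Lower bound on $-\log\pr{\cdot}/t$.} Since $(a,b)\subseteq[0,+\infty)$, the event $\cbr{\forall_{s\leq t} X_s \geq Y_s, X_t - Y_t \in (a,b)}$ is contained, up to a null set (recall $Y_{\rho_{N(t)}}=0$ and $\rho_{N(t)}\leq t$), in $\cbr{\forall_{s\leq \rho_{N(t)}} X_s \geq Y_s, X_{\rho_{N(t)}} > 0}$. Applying the preceding lemma with $(a',b')=(0,+\infty)$ along the random but a.s.\ infinite sequence $n=N(t)$ (valid because the pointwise-in-$\omega$ convergence of the discrete limit holds for any subsequence tending to infinity) gives $-\log\pr{\cdot}$ of the right-hand side divided by $N(t)$ tending to $\tilde\gamma$ a.s.; multiplying by $N(t)/t$ yields $\liminf_{t\to+\infty}-\log\pr{\cdot}/t \geq \tilde\gamma/\E(r_1)$ a.s.

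\emph{Upper bound on $-\log\pr{\cdot}/t$.} Fix an integer $L\geq 1$ and constants $0<a_1<b_1<+\infty$. Since $\rho_{N(t)-L}\leq t$ eventually a.s., conditioning on $Y$ (under which $\rho_{N(t)-L}$ is deterministic) and applying the Markov property of $X$ at this time yields
\[
  \pr{\forall_{s\leq t} X_s \geq Y_s, X_t - Y_t \in (a,b) \mid Y} \geq \pr{E_1 \mid Y}\cdot \inf_{x\in(a_1,b_1)} \pr{E_2 \mid Y, X_{\rho_{N(t)-L}} = x},
\]
with $E_1=\cbr{\forall_{s\leq \rho_{N(t)-L}} X_s \geq Y_s, X_{\rho_{N(t)-L}} \in (a_1,b_1)}$ and $E_2$ the analogous tail event on $[\rho_{N(t)-L}, t]$ including the endpoint constraint $X_t-Y_t \in (a,b)$. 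The preceding lemma controls the first factor: $-\log\pr{E_1\mid Y}/(N(t)-L)\to\tilde\gamma$ a.s. The tail length $\Delta(t)=t-\rho_{N(t)-L}=L\E(r_1)+o(t)$ a.s.\ by the LLN, hence $\Delta(t)=o(t)$ since $L$ is fixed. Partitioning $[\rho_{N(t)-L}, t-1]$ into unit subintervals, iterating the FKG inequality (Corollary \ref{cor:conditionalFKG}), and using point 3 of Fact \ref{fact:boundedMoments} on the terminal subinterval (to accommodate the constraint $X_t-Y_t\in(a,b)$) bounds the tail's $-\log$-probability by a sum of integrable contributions of total order $O(\Delta(t))=o(t)$ a.s. Uniformity in $x\in(a_1,b_1)$ follows from the coupling monotonicity of the OU process in its initial condition (starting $X$ higher only increases the ``stay above'' probability) together with the fact that $e^{-\mu\Delta(t)}\to 0$, so that the conditional law of $X_t$ given $X_{\rho_{N(t)-L}}=x$ is asymptotically $x$-independent. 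Combining the two contributions and dividing by $t$ gives $\limsup_{t\to+\infty}-\log\pr{\cdot}/t \leq \tilde\gamma/\E(r_1)$ a.s.

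The main technical obstacle is the tail analysis on $[\rho_{N(t)-L}, t]$: the endpoint constraint $X_t - Y_t\in(a,b)$ lies at a time interior to the last excursion of $Y$, so the excursion-indexed estimate from the preceding lemma does not apply directly. Isolating the terminal unit subinterval and invoking the exponential integrability of Fact \ref{fact:boundedMoments}(3) there, while handling the remainder by the FKG-iterated unit decomposition, resolves this.
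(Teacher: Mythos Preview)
Your renewal-based sandwich and the lower bound are essentially the paper's argument: the paper also bounds the event from above by the one up to a renewal time before $t$ and invokes \eqref{eq:limitAll2} together with $\rho_n/n\to\E(r_1)$. The uniform integrability upgrade via Fact~\ref{fact:uniformIntegrability} is likewise what the paper does.

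The upper bound, however, has a genuine gap. You fix an integer $L\geq 1$ and then claim that $e^{-\mu\Delta(t)}\to 0$, where $\Delta(t)=t-\rho_{N(t)-L}$. This is false: for fixed $L$ the tail length $\Delta(t)$ is bounded in distribution (it equals the overshoot $t-\rho_{N(t)}$ plus $L$ excursion lengths indexed near $N(t)$, and neither part diverges). Hence your argument for uniformity in $x\in(a_1,b_1)$---that the OU drift erases the initial condition over $[\rho_{N(t)-L},t]$---does not go through. Relatedly, your statement that ``$\Delta(t)=L\E(r_1)+o(t)$'' is formally true but misleading: $\Delta(t)$ is $o(t)$ only in the trivial sense that it is tight, and you cannot extract both $\Delta(t)\to\infty$ (for the drift argument) and $\Delta(t)=o(t)$ (for negligibility) from a fixed $L$. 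A second, smaller gap is the claim that the FKG-iterated unit-interval sum is ``$O(\Delta(t))$ a.s.'': integrability of each summand (Fact~\ref{fact:boundedMoments}) gives control in expectation, not pathwise, and the summands are neither independent nor identically distributed since $Y$ is not stationary.

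The paper avoids both issues by working with \emph{deterministic} indices $m(t)=\lfloor t/\E r_1-t^{2/3}\rfloor$ and $M(t)=\lfloor t/\E r_1+t^{2/3}\rfloor$ and restricting to the event $\mathcal{A}_t=\{t\in[\rho_{m(t)},\rho_{M(t)}]\}$, whose indicator tends to $1$ a.s. On $\mathcal{A}_t$ the upper bound is obtained by the stronger inclusion
\[
  \cbr{\forall_{s\leq \rho_{m(t)}}X_s\geq Y_s,\ \forall_{s\in[\rho_{m(t)},\rho_{M(t)}]}X_s-Y_s\in(a,b)}\subset\cbr{\forall_{s\leq t}X_s\geq Y_s,\ X_t-Y_t\in(a,b)},
\]
after which a single Markov split at $\rho_{m(t)}$ (with an intermediate interval $(a',b')\subset(a,b)$ at that time) isolates a first factor handled by \eqref{eq:limitAll2} and a second ``tube'' factor over $M(t)-m(t)\asymp t^{2/3}$ excursions, whose $-\log$ is $o(t)$ directly. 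If you want to repair your route, letting $L=L(t)\to\infty$ with $L(t)=o(t)$ (e.g.\ $L(t)=\lfloor\log t\rfloor$) would restore $\Delta(t)\to\infty$ while keeping $\Delta(t)=o(t)$; you would still need a pathwise bound on the tail sum, most cleanly via the paper's tube trick rather than a unit-interval decomposition.
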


Consequence of this lemma, we set $\gamma = \frac{\tilde{\gamma}}{\mathbb{E}(r_1)}$.

\begin{proof}
Let $m(t)\eqdef\lfloor t/\ev{}\rho_{1}-t^{2/3}\rfloor$ and $M(t)\eqdef\lfloor t/\ev{}\rho_{1}+t^{2/3}\rfloor$ and 
\[
  \mathcal{A}_{t}\eqdef\cbr{t\in[\rho_{m(t)},\rho_{M(t)}]}.
\]
Clearly, $\rho_{n}=\sum_{k=0}^{n-1}r_{k}$, using Fact \ref{fact:decompositon} one checks that $1_{\mathcal{A}_{t}}\to1$ a.s. By Fact \ref{fact:uniformIntegrability} it follows that
\begin{equation}
  \lim_{t\to+\infty}H_{t}1_{\mathcal{A}_{t}^{c}}=0,\quad\text{a.s and }L^{p}.\label{eq:convergenceTo0}
\end{equation}
By \eqref{eq:limitAll2} we have
\begin{equation}
  \liminf_{t\to+\infty} H_{t}
  \geq \lim_{t\to+\infty}1_{\mathcal{A}_{t}}\frac{-\log\mathbb{P}\rbr{\forall_{s\leq\rho_{m(t)}}X_{s}\geq Y_{s}|Y}}{m(t)}\frac{m(t)}{t} = \frac{\tilde{\gamma}}{\ev{}r_{1}},\quad\text{a.s and }L^{p}.
  \label{eq:xyz-1}
\end{equation}
The bound from above is slightly more involved 
\[
	1_{\mathcal{A}_{t}} H_{t}\leq 1_{\mathcal{A}_{t}}\frac{-\log\mathbb{P}\rbr{\forall_{s\leq\rho_{m(t)}}X_{s}\geq Y_{s},\forall_{s\in[\rho_{m(t)},\rho_{M(t)}]}X_s-Y_s\in (a,b)|Y}}{M(t)}\frac{M(t)}{t}.
\]
Let us denote the probability in the expression above by $p$. We fix $a',b'$ such that $a<a'<b'<b$ and use the Markov property
\begin{multline*}
  \log p\geq \log\mathbb{P}\rbr{\forall_{s\leq\rho_{m(t)}}X_{s}\geq Y_{s},X_{\rho_{m(t)}}-Y_{\rho_{m(t)}}\in (a',b')|Y} \\
  +\log\sbr{\inf_{x\in [a',b']} \log \mathbb{P}\rbr{\forall_{s\in[\rho_{m(t),\rho_{M(t)}}]}X_s-Y_s\in (a,b)|Y,X_{\rho_{m(t)}}=x}}.
\end{multline*}
It is easy to check that the second term divided by $t$ converges to $0$ (which essentially follows by the fact that $M(t)-m(t) = o(t)$). Thus using \eqref{eq:limitAll2} again we obtain
\[
  \liminf_{t\to+\infty} H_{t} \leq \frac{\tilde{\gamma}}{\ev{}r_{1}},\quad\text{a.s and }L^{p}.
\]
This together with \eqref{eq:xyz-1} concludes the proof of (\ref{eq:gammaDef}). 
\end{proof}

We study the properties of the exponent $\gamma$ as a function of a constant $\beta$ by which the path $Y$ is multiplied. More precisely, with a slight abuse of notation, we set for $\beta \in \R$
\[
  \gamma(\beta) = \lim_{t \to +\infty} \frac{1}{t} \log \pr{X_s \geq \beta Y_s, s \leq t | Y},
\]
again without denoting the dependence in $\mu_1,\mu_2,\sigma_1,\sigma_2$, to avoid cumbersome notation.

\begin{lem}
\label{lem:fungamma}
The function $\gamma$ is symmetric and convex.
\end{lem}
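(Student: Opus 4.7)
The plan is to treat symmetry and convexity separately. Throughout I would work with the definition \eqref{eq:gammaDef} specialised to $(a,b) = (0,+\infty)$, so that no terminal-window constraint appears; this is legitimate since Lemma \ref{lem:proofGammaDef} shows the a.s.\ limit is independent of $(a,b)$.

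For symmetry, I would exploit that $-Y$ is again an Ornstein-Uhlenbeck process with the same law as $Y$ and independent of $X$. Consequently, the random variables $\mathbb{P}(\forall_{s\leq t} X_s \geq \beta Y_s \mid Y)$ and $\mathbb{P}(\forall_{s\leq t} X_s \geq -\beta Y_s \mid Y)$ share a common distribution. Taking $-\log$, dividing by $t$, and letting $t\to+\infty$, Lemma \ref{lem:proofGammaDef} gives a.s.\ deterministic limits $\gamma(\beta)$ and $\gamma(-\beta)$ respectively; equality of laws then forces equality of these constants.

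For convexity, I would fix $\beta_1,\beta_2 \in \R$, $\lambda \in [0,1]$, and set $\beta = \lambda\beta_1 + (1-\lambda)\beta_2$. Conditionally on $Y$, the process $X$ is still an Ornstein-Uhlenbeck process by independence, so for a.e.\ realisation of $Y$ I would apply Lemma \ref{lem:covexTail} on $[0,t]$ to the (now deterministic given $Y$) c\`adl\`ag barriers $h_i(s) = \beta_i Y_s$ for $i=1,2$. Convexity in the interpolation parameter yields, a.s.,
\[
  -\log \mathbb{P}(\forall_{s\leq t} X_s \geq \beta Y_s \mid Y) \leq \lambda\bigl[-\log \mathbb{P}(\forall_{s\leq t} X_s \geq \beta_1 Y_s \mid Y)\bigr] + (1-\lambda)\bigl[-\log \mathbb{P}(\forall_{s\leq t} X_s \geq \beta_2 Y_s \mid Y)\bigr].
\]
Dividing by $t$ and passing to the limit $t \to +\infty$ via Lemma \ref{lem:proofGammaDef} would then give $\gamma(\beta) \leq \lambda \gamma(\beta_1) + (1-\lambda)\gamma(\beta_2)$.

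The only point requiring care is the positivity hypothesis of Lemma \ref{lem:covexTail}, namely that $\mathbb{P}(\forall_{s\leq t} X_s \geq \beta_i Y_s \mid Y) > 0$ a.s.\ for \emph{both} $i=1,2$, whereas Theorem \ref{thm:OUresult} only supplies $X_0 > \beta Y_0$. Under the convention $Y_0 = 0$ used throughout Section \ref{sub:Path-decomposition}, this reduces to $X_0 > 0$, which one may freely impose since by Lemma \ref{lem:proofGammaDef} the limit $\gamma(\beta)$ does not depend on the starting position of $X$. The OU process $X$ then has positive probability to dominate the continuous barrier $\beta_i Y$ on the compact interval $[0,t]$, and the argument above applies verbatim.
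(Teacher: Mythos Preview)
Your proposal is correct and follows essentially the same approach as the paper: symmetry is deduced from the symmetry of the law of $Y$, and convexity from applying Lemma \ref{lem:covexTail} conditionally on $Y$ to the barriers $\beta_i Y$ and then passing to the limit $t\to+\infty$. Your treatment is in fact slightly more careful than the paper's, which does not explicitly address the positivity hypothesis of Lemma \ref{lem:covexTail}.
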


\begin{proof}
As the law of $Y$ is symmetric, $\gamma$ is symmetric. In effect, for any $t \geq 0$ we have
\[
 \pr{X_s \geq \beta Y_s, s \leq t | Y} \overset{(d)}{=}  \pr{X_s \geq -\beta Y_s, s \leq t | Y},
\]
therefore $\lim_{t \to +\infty} \frac{1}{t} \log \pr{X_s \geq \beta Y_s, s \leq t | Y} \overset{(d)}{=} \lim_{t \to +\infty} \frac{1}{t} \log \pr{X_s \geq -\beta Y_s, s \leq t | Y}$. We conclude that $\gamma(\beta) = \gamma(-\beta)$.

To prove convexity we use Lemma \ref{lem:covexTail}. To this end we fix $t>0$ and $\lambda\in(0,1)$. Applied conditionally on $Y$ the lemma implies that for any $t \geq 0$, almost surely
\[
  -\frac{\log\pp(\forall_{s\leq t}X_{s}\geq(\lambda a+(1-\lambda)b)Y_{s}|Y)}{t} \leq -\lambda\frac{\log\pp(\forall_{s\leq t}X_{s}\geq aY_{s}|Y)}{t}-(1-\lambda)\frac{\log\pp(\forall_{s\leq t} X_{s}\geq bY_{s}|Y)}{t}.
\]
Taking $t\to+\infty$ we obtain $\gamma(\lambda a+(1-\lambda)b)\leq\lambda\gamma(a)+(1-\lambda)\gamma(b).$
\end{proof}

Using the same arguments, one obtains easily the following result.
\begin{lem}
\label{lem:fundelta}
For $\beta \in \R$, we set
\[
  \delta(\beta) = \lim_{t \to +\infty} \frac{1}{t} \log \pr{\forall_{s \leq t} X_s \geq \beta Y_s},
\]
the function $\delta$ is symmetric and convex.
\end{lem}

\begin{proof}
We recall that by Lemma \ref{lem:covexTail}, the function
\[
  \beta \mapsto - \log \pp(\forall_{s \leq t}, X_s \geq \beta Y_s|Y)
\]
is a.s. a convex function. Thus, for any $\lambda \in [0,1]$ and $\beta,\beta'$ we have
\[
  \pp(\forall_{s \leq t}, X_s \geq (\lambda\beta + (1-\lambda) \beta') Y_s|Y) \leq  \pp(\forall_{s \leq t}, X_s \geq \beta Y_s|Y)^\lambda  \pp(\forall_{s \leq t}, X_s \geq \beta' Y_s|Y)^{1-\lambda}.
\]
Consequently, using Holder inequality, we obtain
\[
  \ev{}( \pp(\forall_{s \leq t}, X_s \geq (\lambda\beta + (1-\lambda) \beta') Y_s|Y)) \leq \pp(\forall_{s \leq t}, X_s \geq \beta Y_s)^\lambda\pp(\forall_{s \leq t}, X_s \geq \beta' Y_s)^{1-\lambda},
\]
concluding the proof.
\end{proof}

\section{Relevance of the disorder}
\label{sec:relevantDisorder}

By Lemmas \ref{lem:proofDeltaDef}, \ref{lem:proofGammaDef}, \ref{lem:fungamma} and \ref{lem:fundelta}, there exist two convex symmetric functions $\gamma_{\mu_1,\mu_2}$ and $\delta_{\mu_1,\mu_2}$ such that \eqref{eq:gammaDef} and \eqref{eq:deltaDef} both hold. Therefore, the only thing left to prove Theorem \ref{thm:OUresult} is the strict inequality \eqref{eq:mainAim}.

Observe that for any fixed $t>0$, by Jensen's inequality we have
\[
\ev{}\sbr{-\log\mathbb{P}\rbr{\forall_{s\leq t}X_{s}\geq \beta Y_{s}|Y}}>-\log\mathbb{P}\rbr{\forall_{s\leq t}X_{s}\geq \beta Y_{s}},
\]
which implies $\gamma_{\mu_{1},\mu_{2}}(\beta)\geq\delta_{\mu_{1},\mu_{2}}(\beta)$ for any $\beta \in \R$. Obtaining the strict inequality is a much harder result. We recall the path decomposition from Section \ref{sub:Path-decomposition}. The key observation, on which the proof strategy hinges on, is that Jensen's inequality applied on each interval $[\rho_{i},\rho_{i+1}]$ allows to prove there exists a ``gap'' between the two quantities. The main technical difficulty will be to control this ``gap'' uniformly in $i$. This control is established in Proposition \ref{prop:mainPropositon}.

In this section, unless specified otherwise, we assume that $X$ and $Y$ are two independent Ornstein-Uhlenbeck processes, with parameters $(\mu_1,\sigma_1)$ and $(\mu_2,\sigma_2)$ respectively, such that $Y_0 = 0$ and $X_0 = 1$. Before the main proof we present three technical lemmas. The first one is a concentration inequality for a conditioned Ornstein-Uhlenbeck process.
\begin{lem}
\label{lem:uniformTightness}Let $X$ be an Ornstein-Uhlenbeck process. For any $C_{1}>0$ there exist $C_{2},C_{3}>0$ such that for any $f:\R_{+}\mapsto\R_{+}$ being a $C_{1}$-Lipschitz function we have 
\begin{equation}
  \pr{X_{t}\geq x+f(t)|\forall_{s\leq t}X_{s}\geq f(s)}\leq\exp\rbr{-C_{2}x^{2}},\quad x\geq C_{3}f(t),
  \label{eq:concentrationAim-1}
\end{equation}
as soon as $X_{0}\in[f(0)+2,(C_{1}+1)f(0))$.
\end{lem}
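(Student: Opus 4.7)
The strategy is to bound the ratio
\[
  \pp(X_t\geq x+f(t)\mid A_f)=\frac{\pp(X_t\geq x+f(t),\,A_f)}{\pp(A_f)},
\]
where $A_f=\{\forall_{s\leq t}X_s\geq f(s)\}$. The key idea is that the exponential cost (in $f$ and $t$) of the conditioning appears in both numerator and denominator, and the ratio isolates only the \emph{excess} cost of forcing $X_t$ above the level $x+f(t)$; this excess should scale as $\exp(-C_2 x^2)$.

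For the numerator I would start from $\pp(X_t\geq x+f(t),A_f)\leq \pp(X_t\geq x+f(t))$ together with the explicit OU marginal $X_t\sim\mathcal{N}(X_0 e^{-\mu t},\,\tfrac{\sigma^2}{2\mu}(1-e^{-2\mu t}))$, whose variance is uniformly bounded by $\sigma^2/(2\mu)$. Since $X_0\leq (C_1+1)f(0)\leq (C_1+1)(f(t)+C_1 t)$ by the Lipschitz assumption, and the mean $X_0 e^{-\mu t}$ is damped by the factor $e^{-\mu t}$, the condition $x\geq C_3 f(t)$ with $C_3$ large enough ensures the mean is dominated by $x+f(t)$; the Gaussian tail estimate \eqref{eq:gaussianTail} then yields $\pp(X_t\geq x+f(t))\leq \exp(-c(x+f(t))^2)$ for a constant $c=c(\mu,\sigma)>0$. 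For the denominator I would use the buffer $X_0\geq f(0)+1$ together with strong FKG (Fact~\ref{fact:StrongFKG}): partitioning $[0,t]$ into unit intervals, one forces $X_s$ into a tube around $f(s)$ on each interval. The Lipschitz bound $|f(s)-f(k)|\leq C_1$ on $[k,k+1]$ allows the tube event on each step to be treated as a Gaussian calculation for an OU bridge, and strong FKG allows one to chain these together. After a careful accounting of the drift cost coming from the mean reversion $-\mu X$ (which is large when $f(k)$ is large), this yields $\pp(A_f)\geq \exp(-c' f(t)^2)$ up to polynomial corrections in $t$.

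Combining the two estimates gives
\[
  \pp(X_t\geq x+f(t)\mid A_f)\leq \exp\!\bigl(-c(x+f(t))^2+c' f(t)^2\bigr),
\]
and for $x\geq C_3 f(t)$ with $C_3$ chosen sufficiently large compared to $c'/c$, the cross term $2cxf(t)$ absorbs the $c' f(t)^2$ correction, leaving the desired tail $\exp(-C_2 x^2)$. The main obstacle I expect is producing a denominator lower bound with the correct exponential dependence on $f(t)$, uniformly in $f$ and $t$: the naive unit-step iteration loses a factor at each step and does not close the loop. The Lipschitz assumption is crucial for the delicate point of balancing the per-step cost of staying inside the tube against the per-step drift of the OU process, and I would expect the cleanest implementation to combine the Markov property at time $t-1$ with an inductive ansatz for the conditional tail of $X_{t-1}$ given $A_{f,[0,t-1]}$, so that the constants $C_2$ and $C_3$ remain stable across steps.
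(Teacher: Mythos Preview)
Your main approach has a genuine gap that you yourself flag at the end. Dropping the conditioning in the numerator, $\pp(X_t\geq x+f(t),A_f)\leq\pp(X_t\geq x+f(t))$, throws away a factor that is exponentially small in $t$: the event $A_f$ forces the OU process to stay above a positive level for time $t$, which by recurrence costs at least $\exp(-\kappa t)$ for some $\kappa>0$. The same exponential-in-$t$ factor sits in the denominator $\pp(A_f)$, and it is precisely this cancellation that makes the conditional probability well-behaved. Your claimed lower bound $\pp(A_f)\geq\exp(-c'f(t)^2)$ ``up to polynomial corrections in $t$'' is therefore false in general (take $f\equiv M$ constant), and the ratio $\exp(-c(x+f(t))^2)/\pp(A_f)$ blows up as $t\to\infty$ rather than staying bounded by $\exp(-C_2x^2)$.

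Your final sentence is the right instinct: the paper proves the bound by induction on integer times, carrying the Gaussian tail hypothesis \eqref{eq:concentrationAim-1} from $t$ to $t+1$ via the AR(1) decomposition $X_{n}=cX_{n-1}+G_n$ with $G_n$ i.i.d.\ Gaussian. The key idea you are missing is how to reduce the continuous-time conditioning to this discrete setting. The paper uses strong FKG twice: first to replace the barrier by $f(s)+\min\{(s+1)^{1/3},(t-s+1)^{1/3}\}$ (still Lipschitz, and bounded below by a growing function), and second to dominate the conditional law by the law conditioned on the \emph{stronger} discrete event $\{\forall_{n\leq t}X_n\geq c_1 f(n)\}$ for a large constant $c_1$. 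With this stronger discrete conditioning, the probability of satisfying the original continuous barrier is bounded below by a constant $p>0$ uniformly in $t$ and $f$ (this is where the growth assumption on $f$ is used), so the denominator is harmless. The numerator is then exactly the discrete conditional tail, and the induction closes because the contraction factor $c\in(0,1)$ in the AR(1) recursion gives enough room to absorb the Lipschitz growth of $f$ between consecutive integers.
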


\begin{proof}
To avoid cumbersome notation we assume that $t\in\mathbb{N}$. The proof for general $t$ follows similar lines. Further we assume that 
\begin{equation}
\forall_{s\leq t}f(s)\geq\min\cbr{(s+1)^{1/3},(t-s+1)^{1/3}},\label{eq:barrierAssumption}
\end{equation}
If it is not the case, by Lemma \ref{lem:StrongFKG} we can freely change $f$ by $s \mapsto f(s)+\min\cbr{(s+1)^{1/3},(t-s+1)^{1/3}}$ which is $(C_{1}+1)$-Lipschitz. 

We shorten $x_{t}\eqdef x+f(t)$ and let $c_{1}>1$. Using Lemma \ref{lem:StrongFKG} we estimate 
\begin{align}
  \pr{X_{t}\geq x_{t}|\forall_{s\leq t}X_{s}\geq f(s)} & \leq\pr{X_{t}\geq x_{t}\left|\forall_{s\leq t}X_{s}\geq f(s),\forall_{n\in\cbr{1,\ldots,t}}X_{n}\geq c_{1}f(n)\right.}\label{eq:tmp99-1}\\
  & =\frac{\pr{X_{t}\geq x_{t},\forall_{s\leq t}X_{s}\geq f(s)\left|\forall_{n\in\cbr{1,\ldots,t}}X_{n}\geq c_{1}f(n)\right.}}{\pr{\forall_{s\leq t}X_{s}\geq f(s)\left|\forall_{n\in\cbr{1,\ldots,t}}X_{n}\geq c_{1}f(n)\right.}}\nonumber \\
  & \leq\frac{\pr{X_{t}\geq x_{t}\left|\forall_{n\in\cbr{1,\ldots,t}}X_{n}\geq c_{1}f(n)\right.}}{\pr{\forall_{s\leq t}X_{s}\geq f(s)\left|\forall_{n\in\cbr{1,\ldots,t}}X_{n}\geq c_{1}f(n)\right.}}.\nonumber 
\end{align}
Let us first treat the denominator denoted by $I_{d}$. We use (\ref{eq:barrierAssumption}) and choose $c_{1}$ sufficiently large so that $I_{d}$ is bounded from below by a constant independent on $t$ and $f$. Using Lemma \ref{lem:StrongFKG} we obtain
\begin{align*}
  I_{d} & =\pr{\forall_{s\in[0,1]}X_{s}\geq f(s)\left|\forall_{n\in\cbr{1,\ldots,t}}X_{n}\geq c_{1}f(n)\right.}\\
  & \quad\times\pr{\forall_{s\in[1,t]}X_{s}\geq f(s)\left|{\forall_{n\in\cbr{1,\ldots,t}}X_{n}\geq c_{1}f(n)},{\forall_{s\in[0,1]}X_{s}\geq f(s)}\right.}\\
  & \geq\pr{\forall_{s\in[0,1]}X_{s}\geq f(s)\left|X_{1}\geq c_{1}f(1)\right.}\pr{\forall_{s\in[1,t]}X_{s}\geq f(s)\left|\forall_{n\in\cbr{1,\ldots,t}}X_{n}\geq c_{1}f(n)\right.}.
\end{align*}
Continuing in a same manner we obtain that
\begin{equation}
  I_{d}\geq\pr{\forall_{s\in[0,1]}X_{s}\geq f(s)|X_{1}\geq c_{1}f(1)}\times\prod_{n\in\cbr{1,\ldots,t}}\pr{\forall_{s\in[n-1,n]}X_{s}\geq f(s)\left|X_{n-1}=c_{1}f(n-1)\right.}.
  \label{eq:estimateHaHa}
\end{equation}
By point 3 of Fact \ref{fact:OUfact} conditionally on $X_{n-1} = c_1 f(n-1)$ the process $\cbr{\tilde{X}_t}_{t\in[0,1]}$ defined by $\tilde{X}_{t}\eqdef X_{n-1+t}-c_{1}e^{-\mu t}f(n-1)$ is an Ornstein-Uhlenbeck process starting from $\tilde{X}_{0}=0$. Thus
\begin{align*}
  \pr{\forall_{s\in[n-1,n]}X_{s}\geq f(s)\left|X_{n-1}=c_{1}f(n-1)\right.}  
  &\geq\pr{\forall_{s\in[0,1]}\tilde{X}_{s}\geq\sup_{s\in[n-1,n]}f(s)-c_{1}e^{-\mu}f(n-1)}\\
  &\geq\pr{\forall_{s\in[0,1]}\tilde{X}_{s}\geq-c_{1}e^{-\mu}f(n)/2}.
\end{align*}
We used inequality $\sup_{s\in[n-1,n]}f(s)-c_{1}e^{-\mu}f(n-1)\leq-c_{1}e^{-\mu}f(x)/2$ which can be easily verified by (\ref{eq:barrierAssumption}) and Lipschitz property as soon as $c_{1}$ is large enough. By point 4 of Fact \ref{fact:OUfact} we conclude that
\[
  \pr{\forall_{s\in[n-1,n]}X_{s}\geq f(s)\left|X_{n-1}=c_{1}f(n-1)\right.}\geq1-C\exp\rbr{-c\sbr{c_{1}e^{-\mu}f(n)/2}^{2}}.
\]
By this estimate, (\ref{eq:estimateHaHa}), (\ref{eq:barrierAssumption}) and increasing $c_{1}$ if necessary we obtain 
\[
  \pr{\forall_{s\leq t}X_{s}\geq f(s)\left|\forall_{n\leq t}X_{n-1}\geq c_{1}f(n)\right.}\geq p,
\]
for some $p>0$ which does not depend on $n$. From now on $c_{1}$ is fixed. Now in order to show (\ref{eq:concentrationAim-1}) it is enough to prove that the numerator in (\ref{eq:tmp99-1}) decays in a Gaussian fashion. This is the aim for the rest of the proof. We define a sequence $\cbr{G_{n}}_{n\geq0}$ by putting $G_{0}=X_{0}>0$ and 
\begin{equation}
  G_{n}\eqdef X_{n}-c_{n}X_{n-1},\quad n\geq1\label{eq:iidDecomposition}
\end{equation}
where $c_{n}\eqdef\frac{Cov(X_{n},X_{n-1})}{Cov(X_{n},X_{n})}.$ It is easy to check that in fact $c_{n}=c\in(0,1)$ and moreover the random variables $\cbr{G_{n}}_{n\geq0}$ are independent, distributed according to $\mathcal{N}(0,b^{2})$ where $b$ is a function of the parameters of the process $X$. We will prove that there exist $c_{2},C_{2}>0$ such that for any $x>C_{2}f(t)$ and $t\in\N$ we have
\begin{equation}
  \pr{X_{t}\geq x+f(t)\left|\forall_{n\in\cbr{1,\ldots,t}}X_{n}\geq c_{1}f(n)\right.}\leq e^{-c_{2}x^{2}}.
  \label{eq:inductionOUconcentration}
\end{equation}
We start by choosing constants $B,c_{2}>0$ satisfying
\begin{equation}
B\in(c,1),\quad c_{2}<\frac{(1-B)^{2}}{2b^{2}}.\label{eq:constantsAssumptions}
\end{equation}
Let $L\geq0$, without loss of generality we assume that $f(t)\geq L$. This assumption with the Lipschitz property yields that 
\begin{equation}
  A_{L}^{-1}\leq\frac{f(t+1)}{f(t)}\leq A_{L}.
  \label{eq:logLipshitz}
\end{equation}
for $A_{L}$ such that $A_{L}\searrow_{L}1$. We fix $L$ such that $B/(cA_{L})>1$. We proceed inductively. The constants $L$ and $C_{2}$ potentially may be increased during the further proof (the other constants stay fixed). We stress that this increase happens once and later the constants are valid for all steps of the induction. 

Checking the base case is an easy exercise. Let us assume that (\ref{eq:inductionOUconcentration}) holds for $t\geq0$. Let $x$ be such that $x+f(t+1)\geq c_{1}f(t+1)$, we have
\begin{align*}
  &\pr{X_{t+1}\geq x+f(t+1)\left|\forall_{n\in\cbr{1,\ldots,t+1}}X_{n}\geq c_{1}f(n)\right.}\\
   =&\frac{\pr{{X_{t+1}\geq x+f(t+1)},{\forall_{n\in\cbr{1,\ldots,t}}X_{n}\geq c_{1}f(n)}}}{\pr{\forall_{n\in\cbr{1,\ldots,t+1}}X_{n}\geq c_{1}f(n)}}\\
  =&\frac{\pr{X_{t+1}\geq x+f(t+1)\left|\forall_{n\in\cbr{1,\ldots,t}}X_{n}\geq c_{1}f(n)\right.}}{\pr{X_{t+1}\geq c_{1}f(t+1)\left|\forall_{n\in\cbr{1,\ldots,t}}X_{n}\geq c_{1}f(n)\right.}}.
\end{align*}
We denote the denominator by $I_{D}$. By (\ref{eq:iidDecomposition}) and (\ref{eq:logLipshitz}) we have
\begin{equation}
I_{D}\geq\pr{G_{t+1}\geq c_{1}f(t+1)-cc_{1}f(t)}\geq\pr{G_{t+1}\geq c_{1}(A_{L}-c)f(t)}.\label{eq:id}
\end{equation}
By (\ref{eq:iidDecomposition}) and the union bound we conclude that the numerator is smaller than $I_{n}^{1}+I_{n}^{2}$, where 
\[
  I_{n}^{1}\eqdef\pr{X_{t}\geq\frac{B}{c}\rbr{x+f(t+1)}\left|\forall_{n\in\cbr{1,\ldots,t}}X_{n}\geq c_{1}f(n)\right.},\quad I_{n}^{2}\eqdef\pr{G_{t+1}\geq(1-B)\rbr{x+f(t+1)}}.
\]
Let $x>C_{2}f(t+1)$ then 
\[
\frac{B}{c}\rbr{x+f(t+1)}-c_{1}f(t)\geq\frac{B}{c}(C_{2}+1)f(t+1)-c_{1}f(t)\geq f(t)\sbr{\frac{B}{cA_{L}}\rbr{C_{2}+1}-c_{1}}\geq C_{2}f(t).
\]
We assumed that $B/(cA_{L})>1$ thus the last inequality holds if we choose $C_{2}$ large enough. We can thus use the induction hypothesis (\ref{eq:inductionOUconcentration}) for $t$. We have
\begin{align*}
I_n^1  \leq\exp\cbr{-c_{2}\rbr{\frac{B}{c}x+\frac{B}{c}f(t+1)-f(t)}^{2}} \leq\exp\cbr{-c_{2}\rbr{\frac{B}{c}x+[B/(cA_{L})-1]f(t)}^{2}}.
\end{align*}
Recalling (\ref{eq:id}) and increasing $L$ so that $c_{1}(A_{L}-c)f(t)\geq c_{1}(A_{L}-c)L\geq2$ holds we can use the Gaussian tail estimate (\ref{eq:gaussianTail}) as follows 
\begin{align*}
\frac{I_{n}^{1}}{I_{D}} & \leq\frac{5c_{1}(A_{L}-c)f(t)}{b}\exp\cbr{\frac{c_{1}^{2}(A_{L}-c)^{2}f(t)^{2}}{2b^{2}}}\exp\cbr{-c_{2}\rbr{\frac{B}{c}x+[B/(cA_{L})-1]f(t)}^{2}}\\
 & \leq\frac{5c_{1}(A_{L}-c)f(t)}{b}\exp\cbr{\frac{c_{1}^{2}(A_{L}-c)^{2}}{2b^{2}}f(t)^{2}-\frac{2c_{2}B[B/(cA_{L})-1]}{c}xf(t)}\exp\cbr{-\frac{c_{2}B^{2}}{c^{2}}x^{2}}.
\end{align*}
We increase $C_{2}$ (we recall that $x\geq C_{2}f(t+1)$) so that $\frac{c_{1}^{2}(A_{L}-c)^{2}}{2b^{2}}<C_{2}\frac{2c_{2}B[B/(cA_{L})-1]}{cA_{L}}$. Then we increase $L$ if necessary so that the first two factors are bounded by $1/2$. Finally 
\[
\frac{I_{n}^{1}}{I_{D}}\leq\frac{1}{2}\exp\cbr{-\frac{c_{2}B^{2}}{c^{2}}x^{2}},
\]
which by (\ref{eq:constantsAssumptions}) implies $I_{n}^{1}/I_{D}\leq\exp\cbr{-c_{2}x^{2}}/2$. We perform similar calculations for $I_{n}^{2}$: 
\begin{align*}
\frac{I_{n}^{2}}{I_{D}} & \leq\frac{5c_{1}(A_{L}-c)f(t)}{b}\exp\cbr{\frac{c_{1}^{2}(A_{L}-c)^{2}f(t)^{2}}{2b^{2}}}\exp\cbr{-\frac{(1-B)^{2}}{2b^{2}}\rbr{x+f(t)/A_{L}}^{2}}\\
 & \leq\frac{5c_{1}(A_{L}-c)f(t)}{b}\exp\cbr{\frac{c_{1}^{2}(A_{L}-c)^{2}}{2b^{2}}f(t)^{2}-\frac{(1-B)^{2}}{b^{2}A_{L}}xf(t)}\exp\cbr{-\frac{(1-B)^{2}}{2b^{2}}x^{2}}\\
 & \leq\frac{1}{2}\exp\cbr{-\frac{(1-B)^{2}}{2b^{2}}x^{2}},
\end{align*}
where the last estimates follows by increasing $C_{2}$ and $L$ if necessary (analogously to the previous case). Now, by (\ref{eq:constantsAssumptions}) follows $I_{n}^{2}/I_{D}\leq\exp(-c_{2}x^{2})/2$. Recalling the previous step we obtain $(I_{n}^{1}+I_{n}^{2})/I_{D}\leq\exp(-c_{2}x^{2})$ which establish (\ref{eq:inductionOUconcentration}) for $t+1$.
\end{proof}

A similar property holds for conditioning in future.
\begin{lem}
\label{lem:forwardConditioningConcerntration}
Let $X$ be an Ornstein-Uhlenbeck process  and $u\in[c,C]$, for $C>c>0$. Then there exist $C_{1},c_{1}>0$ such that for any $t>1$ we have
\[
  \pr{X_{u}\geq x|\forall_{s\in[u,u+t]}X_{s}\geq(1+s-u)^{1/3}}\leq C_{1}e^{-c_{1}x^{2}}.
\]
\end{lem}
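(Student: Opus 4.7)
The plan is to exploit the structural independence afforded by point 3 of Fact \ref{fact:OUfact}. Writing $X_{u+s'} = e^{-\mu s'}X_u + Z_{s'}$ for $s'\in[0,t]$, where $Z$ is an Ornstein--Uhlenbeck process started at $Z_0=0$ and independent of $X_u$, the event rewrites as
\[
  A := \bigl\{\forall s\in[u,u+t]:\ X_s\geq (1+s-u)^{1/3}\bigr\}
     = \bigl\{X_u \geq \Lambda_t\bigr\},\qquad
  \Lambda_t := \sup_{s'\in[0,t]} e^{\mu s'}\bigl[(1+s')^{1/3}-Z_{s'}\bigr],
\]
with $\Lambda_t\geq 1$ a.s.\ (take $s'=0$) and independent of $X_u$. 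Since $u\in[c,C]$, the random variable $X_u$ is Gaussian with mean and variance bounded away from $0$ and $\infty$; set $\bar\Phi(\lambda):=\pr{X_u\geq\lambda}$. By independence,
\[
  \pr{X_u\geq x\mid A} = \frac{\E\sbr{\bar\Phi(x\vee\Lambda_t)}}{\E\sbr{\bar\Phi(\Lambda_t)}}.
\]

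The numerator is at most $\bar\Phi(x)$ since $\bar\Phi$ is decreasing, and by the Gaussian tail of $X_u$ this is $\leq K_1 e^{-k_1 x^2}$ for appropriate constants. The core of the argument is a uniform-in-$t$ lower bound on the denominator $\pr{A}=\E\sbr{\bar\Phi(\Lambda_t)}$. A direct bound $\pr{A}\geq c_0$ fails since $\Lambda_t$ can be large with high probability when $t$ is large, so I would proceed by cases. For $x\leq M_0$ bounded the inequality $C_1 e^{-c_1 x^2}\geq 1$ is trivial once $C_1$ is chosen large. For $x>M_0$, I introduce a threshold $M=M(x)$ growing linearly in $x$ and use $\pr{A}\geq\bar\Phi(M)\pr{\Lambda_t\leq M}$. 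The event $\{\Lambda_t\leq M\}$ is the event that $Z$ stays above the barrier $(1+s')^{1/3}-Me^{-\mu s'}$, which is non-positive on $[0,(\log M)/\mu]$ and grows like $(s')^{1/3}$ afterwards. Combining Fact \ref{fact:StrongFKG} with Lemma \ref{lem:uniformTightness} applied to $Z$ should give $\pr{\Lambda_t\leq M}\geq c_0>0$ uniformly in $t$, provided $M$ is sufficiently large.

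The main obstacle is arranging this uniformity: $M$ must be large enough for $\pr{\Lambda_t\leq M}$ to stay bounded below as $t$ grows, yet small enough that $\bar\Phi(M)$ does not decay faster than $e^{-c_1 x^2}$. The compromise is feasible because the barrier for $Z$ only grows polynomially while $\bar\Phi$ decays quadratically in the exponent, but the quantitative estimates on the OU process $Z$ that are needed -- controlling the probability that $Z$ stays above a barrier which starts non-positive and then grows like $(s')^{1/3}$ -- must be carried out carefully along the lines of the argument in Lemma \ref{lem:uniformTightness}.
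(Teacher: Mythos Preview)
Your decomposition $X_{u+s'}=e^{-\mu s'}X_u+Z_{s'}$ and the rewriting $A=\{X_u\geq\Lambda_t\}$ with $\Lambda_t$ independent of $X_u$ is correct, and in fact leads to exactly the same integral representation the paper uses: writing $F_t(y):=\pr{\Lambda_t\leq y}$, one has
\[
  \pr{X_u\geq x\mid A}=\frac{\int_x^\infty p_u(y)\,F_t(y)\,\dd y}{\int_1^\infty p_u(y)\,F_t(y)\,\dd y},
\]
and $F_t(y)$ is precisely the paper's $w(y,t)$.

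The gap is in your treatment of the denominator. You propose $\pr{A}\geq\bar\Phi(M)\,\pr{\Lambda_t\leq M}$ and then claim $\pr{\Lambda_t\leq M}\geq c_0>0$ uniformly in $t$. This is false: the event $\{\Lambda_t\leq M\}$ requires $Z_{s'}\geq (1+s')^{1/3}-Me^{-\mu s'}$ for all $s'\in[0,t]$, and for $s'\gg\log M$ the right-hand side is essentially $(s')^{1/3}$. An Ornstein--Uhlenbeck process started at $0$ has bounded stationary variance, so the probability that it stays above a barrier growing like $(s')^{1/3}$ on $[0,t]$ tends to $0$ as $t\to\infty$ (indeed faster than exponentially). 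No choice of $M$ depending only on $x$ can rescue this, and Lemma~\ref{lem:uniformTightness} does not help here, since it gives concentration \emph{under} a growing-barrier conditioning, not a lower bound on the unconditioned probability of such an event.

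The paper circumvents this by never trying to lower-bound $\pr{A}$ absolutely. Instead it bounds the \emph{ratio} $w(x,t)/w(3,t)$ by $\exp\bigl(C(\log x)^{5/3}\bigr)$, uniformly in $t$. The idea is that the advantage of starting at height $x$ over starting at height $3$ is felt only on the time interval $[0,t_x]$ with $t_x\asymp\log x$, since the OU drift damps the initial condition as $e^{-\mu s}$; on $[t_x,t]$ the two processes are coupled up to a negligible correction. Then
\[
  \pr{X_u\geq x\mid A}\leq\frac{\int_x^\infty p_u(y)\,w(y,t)\,\dd y}{w(3,t)\int_3^\infty p_u(y)\,\dd y}
  \leq\frac{\int_x^\infty p_u(y)\,e^{C(\log y)^{5/3}}\,\dd y}{\int_3^\infty p_u(y)\,\dd y},
\]
and the Gaussian density $p_u$ absorbs the sub-Gaussian factor $e^{C(\log y)^{5/3}}$ to give the claimed $C_1e^{-c_1x^2}$. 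In your language, the needed estimate is on $F_t(x)/F_t(3)$, not on $F_t(M)$ alone.
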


\begin{proof}
We set $f(s)\eqdef(1+\lceil s\rceil)^{1/3}$, by Lemma \ref{lem:StrongFKG} it is enough show the claim with $f(s)$ instead of $(1+s)^{1/3}$. Using the Markov property we write 
\begin{align*}
  \pr{X_{u}\geq x|\forall_{s\in[u,u+t]}X_{s}>f(s-u)} & =\frac{\pr{\cbr{X_{u}\geq x}\cap\cbr{\forall_{s\in[u,u+t]}X_{s}>f(s-u)}}}{\pr{\forall_{s\in[u,u+t]}X_{s}>f(s-u)}}\\
  & =\frac{\int_{x}^{+\infty}w(y,t)\pr{X_{u}\in\dd y}}{\int_{0}^{+\infty}w(y,t)\pr{X_{u}\in\dd y}},
\end{align*}
where
\[
  w(y,t)=\pr{\forall_{s\leq t}X_{s}\geq f(s)|X_{0}=y}.
\]
The function is increasing with respect to $y$, thus the denominator can be estimated as
\[
	\int_{3}^{4}w(y,t)\pr{X_{u}\in\dd y} \geq w(3,t)\cdot \pp(X_u \in [3,4]).
\]
By the Gaussian concentration of $X_{u}$, one checks that to show the claim it is enough to that 
\begin{equation}
  \frac{w(x,t)}{w(3,t)}\leq \exp\rbr{C_{1}(\log x)^{5/3}},\label{eq:ulala}
\end{equation}
for $C_{1}>0$ for $x\geq3$. It will be easier to rewrite $w$ as $w(x,t)=\pr{\forall_{s\leq t}X_{s}\geq f(s)-xe^{-\mu s}}$ with the assumption that $X_{0}=0$. Let us set $t_{x}\eqdef\lceil C_{t}\log x\rceil$, where $C_{t}>0$ will be adjusted later. For $x>3$ we have
\begin{align*}
  w(x,t) & =\pr{\forall_{s\in[t_{x},t]}X_{s}\geq f(s)-xe^{-\mu s}|\forall_{s\leq t_{x}}X_{s}\geq f(s)-xe^{-\mu s}}\pr{\forall_{s\leq t_{x}}X_{s}\geq f(s)-xe^{-\mu s}}\\
  &\leq\pr{\forall_{s\in[t_{x},t]}X_{s}\geq f(s)-xe^{-\mu s}|\forall_{s\leq t_{x}}X_{s}\geq f(s)-xe^{-\mu s}}\\
  &\leq\pr{\forall_{s\in[t_{x},t]}X_{s}\geq f(s)-xe^{-\mu s}|\forall_{s\leq t_{x}}X_{s}\geq f(s)+1-3e^{-\mu s}},
\end{align*}
where in the last line we used Lemma \ref{lem:StrongFKG}. Moreover, by convention we assume that the probability above is $1$ if $t_{x}\geq t$. Similarly we estimate
\begin{align*}
  w(3,t) & \geq\pr{{\forall_{s\leq t_{x}}X_{s}\geq f(s)+1-3e^{-\mu s}},{\forall_{s\in[t_{x},t]}X_{s}\geq f(s)}}\\
  & \geq\pr{\forall_{s\leq t_{x}}X_{s}\geq f(s)+1-3e^{-\mu s}}\pr{\forall_{s\in[t_{x},t]}X_{s}\geq f(s)|\forall_{s\leq t_{x}}X_{s}\geq f(s)+1-3e^{-\mu s}}.
\end{align*}
Using calculations similar to (\ref{eq:estimateHaHa}) and $f(t_{x})=O((\log x)^{1/3})$ one can show that 
\[
  \pr{\forall_{s\leq t_{x}}X_{s}\geq f(s)+1-3e^{-\mu s}}\geq c_{3}e^{-C_{3}(\log x)^{2/3}t_{x}},
\]
for some $c_{3},C_{3}>0$. Now we will show that for $C_{t}$ large enough (recall that $t_{x}\eqdef\lceil C_{t}\log x\rceil$) and $y\geq f(t_{x})+1-3e^{-\mu t_{x}}$ there exists a constant $c>0$ such that for any $t > t_x$ we have
\begin{multline*}
  \frac{\pr{\forall_{s\in[t_{x},t]}X_{s}\geq f(s)|X_{t_{x}}=y}}{\pr{\forall_{s\in[t_{x},t]}X_{s}\geq f(s)-xe^{-\mu s}|X_{t_{x}}=y}}\\
  =\pr{\forall_{s\in[t_{x},t]}X_{s}\geq f(s)|\forall_{s\in[t_{x},t]}X_{s}\geq f(s)-xe^{-\mu s},X_{t_{x}}=y}\geq c.
\end{multline*}
Integrating one verifies that
\[
	\frac{\pr{\forall_{s\in[t_{x},t]}X_{s}\geq f(s)-xe^{-\mu s}|\forall_{s\leq t_{x}}X_{s}\geq f(s)+1-3e^{-\mu s}}}{\pr{\forall_{s\in[t_{x},t]}X_{s}\geq f(s)|\forall_{s\leq t_{x}}X_{s}\geq f(s)+1-3e^{-\mu s}}} \leq c^{-1},
\]
which is enough to conclude the proof of (\ref{eq:ulala}) and consequently the proof of the lemma. Equivalently we will show that 
\begin{equation}
H\eqdef\pr{\exists_{s\in[t_{x},t]}X_{s}\leq f(s)|\forall_{s\in[t_{x},t]}X_{s}\geq f(s)-xe^{-\mu s},X_{t_{x}}=y}\leq1-c.\label{eq:HEstimate}
\end{equation}
We consider 
\begin{align*}
 H& \leq\sum_{k=t_{x}}^{\lceil t\rceil}\pr{\exists_{s\in[k,k+1)}X_{s}\leq f(s)|\forall_{s\leq t}X_{s}\geq f(s)-xe^{-\mu s},X_{t_{x}}=y}\\
 & \leq\sum_{k=t_{x}}^{\lceil t\rceil}\pr{\exists_{s\in[k,k+1)}X_{s}\leq f(k)|\forall_{s\in[k,k+1]}X_{s}\geq f(k)-xe^{-\mu s},X_{t_{x}}=y}.
\end{align*}
The first inequality follows by the union bound and the second one by the assumption on $f$ and Lemma~\ref{lem:StrongFKG}. The first term (i.e. $k=t_{x}$) can be made arbitrarily small by choosing $C_{t}$ (and thus $t_{x}$) large. To estimate the other terms we define a function $p_{k}:\R_{+}\mapsto\R$ by 
\[
  p_{k}(A)=-\log\pr{\forall_{s\in[k,k+1]}X_{s}\geq A|X_{t_{x}}=y}.
\]
By Lemma \ref{lem:covexTail} one deduces that $p$ is convex. Further we notice 
\[
  \pr{\forall_{s\in[k,k+1]}X_{s}\geq A|X_{t_{x}}=y}\geq\pr{X_{k}\geq Ae^{\mu}|X_{t_{x}}=y}\pr{\forall_{s\in[k,k+1]}X_{s}\geq A|X_{k}=e^{\mu}A}.
\]
The second factor can be easily bounded from below by a strictly positive constant uniform in $A,k$. Thus for some $C_{4}>0$ we have $p_{k}(A)\leq C_{4}(A+1)^{2}$. Using the convexity of $p_{k}$ it is easy to deduce that for some $C_{5}>0$ we have $p'_{k}(A)\leq C_{5}(A+1),$  where $p'_{k}$ denotes the left derivative of $p_{k}$. Thus 
\begin{multline*}
  \log\pr{\forall_{s\in[k,k+1]}X_{t}\geq f(k)|\forall_{s\in[k,k+1]}X_{t}\geq f(k)-xe^{-\mu k}}\\
  =p_{k}(f(k)-xe^{-\mu k})-p_{k}(f(k))\geq-C_{5}xe^{-\mu k}f(k).
\end{multline*}
Now we can make the final estimate. We write 
\[
\sum_{k=t_{x}+1}^{\lceil t\rceil}\rbr{1-\exp(-C_{5}xe^{-\mu k}f(k))}\leq C_{6}\sum_{k=\lfloor t_{x}\rfloor}^{\lceil t\rceil}f(k)e^{-\mu(k-\lceil\log k/\mu\rceil)}.
\]
for some $C_{6}>0$. Increasing $C_{t}$ (recall that $t_{x}\eqdef\lceil C_{t}\log x\rceil$) if necessary, we can make the sum arbitrarily small, proving (\ref{eq:HEstimate}) and concluding the proof.
\end{proof}

We introduce $b_{t}:[0,t]\mapsto\R_{+}$ by 
\begin{equation}
b_{t}(s)\eqdef\min\cbr{(s+1)^{1/3},(t-s+1)^{1/3}}.\label{eq:barrier}
\end{equation}
Let us recall the notation of Section \ref{sub:Path-decomposition}. We have
\begin{lem}
\label{lem:boundedOU}
There exist $C,c>0$ such that for any $n,k\in\mathbb{N}$ we have 
\begin{equation}
\pr{\forall_{s\leq\rho_{k}}Y_{s}\leq Cb_{\rho_{k}}(s)}>c,\quad\pr{\cbr{\pr{\mathcal{B}_{n,k}|\cbr{r_{i}}}\geq1/10}}>c,\label{eq:tentBound}
\end{equation}
where $\mathcal{B}_{n,k}\eqdef\cbr{\forall_{\rho_{k+1}\leq s\leq\rho_{n}}Y_{s}\leq C(s-\rho_{k+1}+1)^{1/3}}$.
\end{lem}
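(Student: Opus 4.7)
The plan is to reduce both inequalities to a single union bound over the excursions $\{Y^i\}$, exploiting the i.i.d. structure provided by Fact~\ref{fact:decompositon}. The crucial observation is that $M^i$, being a function of the pair $(Y^i, r_i)$, is independent of any partial sum of $r_j$'s not involving $r_i$; in particular it is independent of $\rho_i = \sum_{j<i} r_j$ and of $\rho_k - \rho_{i+1} = \sum_{j=i+1}^{k-1} r_j$. This independence, combined with the exponential tail $\mathbb{P}(M^i > x) \leq C_0 e^{-c_0 x}$ (Fact~\ref{fact:decompositon}) and a Chernoff bound on $\sum r_j$, will make each summand in the union bound summable uniformly in the indices.

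For the first inequality, since $s \mapsto (s+1)^{1/3}$ is increasing and $s \mapsto (\rho_k - s + 1)^{1/3}$ is decreasing, for $s \in [\rho_i, \rho_{i+1}]$ we have $b_{\rho_k}(s) \geq \min\{(\rho_i + 1)^{1/3}, (\rho_k - \rho_{i+1} + 1)^{1/3}\}$. It thus suffices to show that, for $C$ large enough and uniformly in $k$,
\[
\mathbb{P}\Bigl(\exists\, i < k :\; M^i > C \min\bigl\{(\rho_i+1)^{1/3}, (\rho_k - \rho_{i+1}+1)^{1/3}\bigr\}\Bigr) \leq 1/2.
\]
Using $\{M^i > \min(\alpha,\beta)\} \subset \{M^i > \alpha\} \cup \{M^i > \beta\}$ and a union bound, this probability is at most $\sum_{i=0}^{k-1}\bigl[\mathbb{P}(M^i > C(\rho_i+1)^{1/3}) + \mathbb{P}(M^i > C(\rho_k - \rho_{i+1}+1)^{1/3})\bigr]$. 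Factorising each term by independence gives $\mathbb{E}[\mathbb{P}(M^1 > C \tau^{1/3})]$ with $\tau = \rho_i + 1$ or $\tau = \rho_k - \rho_{i+1} + 1$. Splitting on $\{\tau \geq i\mu/2 + 1\}$ (where $\mu = \mathbb{E} r_1$) and its complement, and applying Chernoff's bound $\mathbb{P}(\rho_i < i\mu/2) \leq e^{-\alpha i}$, each summand is bounded by $C_0 e^{-c_0 C (i\mu/2+1)^{1/3}} + C_0 e^{-c_0 C} e^{-\alpha i}$; the analogous estimate with $i$ replaced by $k-1-i$ controls the second family. Both series are summable in $i$ uniformly in $k$, and the total tends to $0$ as $C \to +\infty$, which yields the first bound.

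For the second inequality, the identical union bound applied to the excursions $i \in \{k+1,\ldots,n-1\}$ anchored at $\rho_{k+1}$ yields $\mathbb{P}(\mathcal{B}_{n,k}^c) \leq \epsilon(C)$ with $\epsilon(C) \to 0$ as $C \to +\infty$, uniformly in $n$ and $k$. Setting $Z \eqdef \mathbb{P}(\mathcal{B}_{n,k} \mid \{r_i\}) \in [0,1]$, we have $\mathbb{E}(Z) = \mathbb{P}(\mathcal{B}_{n,k}) \geq 1 - \epsilon(C)$. Since $\mathbb{E}(Z) \leq \mathbb{P}(Z \geq 1/10) + 1/10$, we get $\mathbb{P}(Z \geq 1/10) \geq 9/10 - \epsilon(C)$, which is bounded below by a positive constant once $C$ is chosen so that $\epsilon(C) \leq 1/2$. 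The main technical point is the independence of $M^i$ from the collection of $r_j$ with $j \neq i$, which is exactly what Fact~\ref{fact:decompositon} delivers; after invoking it, the rest is a bookkeeping exercise combining the exponential tails of $M^i$ with a Chernoff estimate on the sums $\sum r_j$, and uniformity in $n, k$ is automatic because the dominating series $\sum_{j \geq 0} e^{-c j^{1/3}}$ converges.
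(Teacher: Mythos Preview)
Your proof is correct and follows essentially the same approach as the paper: a union bound over the excursion intervals $[\rho_i,\rho_{i+1}]$, combining the exponential tail of $M^i$ with a large-deviation lower bound on $\rho_i$ (resp.\ $\rho_k-\rho_{i+1}$), and then the same Markov-type inequality $\E Z \leq \pr{Z\geq 1/10}+1/10$ to pass from the unconditional bound on $\pr{\mathcal{B}_{n,k}}$ to the conditional statement. The only cosmetic differences are that the paper introduces an auxiliary cutoff $l$ to isolate the first few excursions (which you avoid by treating all indices uniformly) and that the paper does not explicitly invoke the independence of $M^i$ from $\rho_i$, bounding $\{M^i\geq Cf(\rho_i)\}\cap\{\rho_i\geq ci\}\subset\{M^i\geq Cf(ci)\}$ directly instead.
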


\begin{proof}
The proof of this result is rather standard, thus we only present a sketch of it. We observe that for any $k \in \N$, we have
\begin{equation}
  \label{eqn:stepone}
  \pr{\forall_{s\leq\rho_{k}}Y_{s}\leq Cb_{\rho_{k}}(s)} \leq 1 - \pr{\exists_{s \geq 0} : Y_s \geq C (1+s)^{1/3}} - \pr{\exists_{s \in [0,\rho_k]} : Y_s \geq C (1 + (\rho_k-s))^{1/3}}.
\end{equation}
We note that by \eqref{eq:OUandBMRelation} and the law of iterated logarithm, we have
\[
  \sup_{s \geq 0} \frac{Y_s}{(1+s)^{1/3}} < +\infty \quad \text{a.s.}
\]
thus the first term in \eqref{eqn:stepone} can be made as small as wished by choosing $C$ large enough. To treat the second term, we use the decomposition in excursions of the Ornstein-Uhlenbeck process and observe that $(Y_{\rho_k-s})$ has the same law as the concatenation of an excursion conditioned to be larger than $1$ and an independent Ornstein-Uhlenbeck process. Thus, using William's decomposition and  the law of iterated logarithm again, for a given $\epsilon>0$ we can choose $C>0$ large enough such that
\[
  \pr{\exists_{s \in [0,\rho_k]} : Y_s \geq C (1 + (\rho_k-s))^{1/3}} < \epsilon.
\]

We now prove the second inequality. By the Markov inequality, we have
\[
  \pr{\pr{ \mathcal{B}_{n,k}^c| \{r_i\}}>9/10} \leq \frac{10}{9} \pr{\mathcal{B}^c_{n,k}}.
\]
Using now the Markov property at time $\rho_k$, we have
\[
  \pr{\mathcal{B}_{n,k}^c} \leq \pr{\exists_{s \geq 0} : Y_s \geq C (1-s)^{1/3})},
\]
therefore choosing $C$ large enough, we can make this probability as small as wished.
\end{proof}

\begin{proof}
The proof is rather standard therefore we present only a sketch. We denote $f(s)\eqdef C(s+1)^{1/3}$. We consider
\begin{align}
  \pr{\forall_{s\leq\rho_{k}}Y_{s}\leq Cb_{\rho_{k}}(s)} & =1-\pr{\exists_{s\leq\rho_{k}}Y_{s}\geq Cb_{\rho_{k}}(s)}\label{eq:uLala}\\
  &\geq1-\pr{\exists_{s\leq\rho_{k}}Y_{s}\geq Cf(s)}-\pr{\exists_{s\leq\rho_{k}}Y_{s}\geq Cf(\rho_{k}-s)}.\nonumber 
\end{align}
Let us treat the second term. Let $l\in\mathbb{N}$, we have
\begin{equation}
  \pr{\exists_{s\leq\rho_{k}}Y_{s}\geq Cf(s)}
  \leq\pr{\exists_{s\geq0}Y_{s}\geq Cf(s)}
  \leq\pr{\exists_{s\in[0,\rho_{l}]}Y_{s}\geq Cf(s)}+\sum_{i=l}^{+\infty}\pr{\exists_{s\in[\rho_{i},\rho_{i+1}]}Y_{s}\geq Cf(s)}.
  \label{eq:middleStep}
\end{equation}
We recall Fact \ref{fact:decompositon} and the notation there. For large enough $i$ and some $c>0$ we have
\begin{align*}
  \pr{\exists_{s\in[\rho_{i},\rho_{i+1}]}Y_{s}\geq Cf(s)}
  \leq & \pr{\cbr{M^{i}\geq Cf(\rho_{i})}\cap\cbr{\rho_{i}\geq ci}}+\pr{\rho_{i}<ci}\\
  \leq & \pr{M^{i}\geq Cf(ci)}+\pr{\rho_{i}<ci}\\
  \leq & e^{-C_{1}Cf(ci)}+e^{-C_{2}i},
\end{align*}
where $C_{1},C_{2}>0$. Increasing $l$ and $C$ one can make (\ref{eq:middleStep}) as small as we want. Treating the third term of (\ref{eq:uLala}) similarly we obtain the first statement of (\ref{eq:tentBound}). We set $\mathcal{A}_{n,k}\eqdef\cbr{\pr{\mathcal{B}_{n,k}|\cbr{r_{i}}}\geq1/10}$ and $p\eqdef\pr{\mathcal{A}_{n,k}}.$ We have
\begin{align*}
  \pr{\mathcal{B}_{n,k}} & =\E\E[1_{\mathcal{B}_{n,k}}|\cbr{r_{i}}]=\E\sbr{1_{\mathcal{A}_{n,k}}\E[1_{\mathcal{B}_{n,k}}|\cbr{r_{i}}]}+\E\sbr{1_{\mathcal{A}_{n,k}^{c}}\E[1_{\mathcal{B}_{n,k}}|\cbr{r_{i}}]}\\
  & \leq p+(1-p)/10=\frac{1}{10}+\frac{9}{10}p.
\end{align*}
By the first argument we choose $C$ such that $\pr{\mathcal{B}_{n,k}}>1/10$ which implies $p>0$ (uniformly in $n$ and $k$).
\end{proof}

\subsection{Reformulation of the problem}
\label{sub:Reformulation-of-the}
We introduce necessary notions and reformulate the problem. Let $\meas$ be the space of finite measures on $\R_{+}$. Given $m\in\meas$ we denote $\norm m{}{}\eqdef\int_{\R_{+}}m(\dd x)$. Let $\mathcal{\pspace}$ be the functional space
\begin{equation}
  \mathcal{P}\eqdef\cbr{f:f\text{ is a continuous function from an interval }[0,t]\text{ to }\R\mbox{\text{ such that }}f(0)=f(t)=0}.\label{eq:pspace}
\end{equation}
Let us define an operator $T:\mathcal{M\times\mathcal{P}}\mapsto\mathcal{M}$. Given a measure $m\in\mathcal{M}$ and $f\in\mathcal{P}$ such that $f:[0,t]\mapsto\R$ we set $T(m,f)\eqdef\tilde{m}$ defined by 
\[
  \tilde{m}(\dd x)\eqdef\norm m{}{}\mathbb{P}_{m/\norm m{}{}}\rbr{\forall_{s\leq t}X_{s}\geq f(s),X_{t}\in\dd x},
\]
where under $\mathbb{P}_{m/\norm m{}{}}$ the process $X$ is an Ornstein-Uhlenbeck process such that $X_{0}=^{d}m/\norm m{}{}$.

For $n\in\N$ we define iteratively $\mathcal{M}$-valued random variables $T_{n}$ by 
\begin{equation}
  T_{n}\eqdef
  \begin{cases}
    \delta_{1} & n=0\\
    T(T_{n-1},Y^{n-1}) & n>0
  \end{cases},
  \label{eq:defTn}
\end{equation}
where $Y^{n}$ is given by (\ref{eq:Yi}). Using the Markov property one proves by induction that
\begin{equation}
  T_{n}=\pr{\forall_{s\leq\rho_{n}}X_{s}\geq Y_{s},X_{\rho_{n}}\in\dd x|Y}.
  \label{eq:tmp99}
\end{equation}
Let us denote $\mathcal{F}\eqdef\sigma(\rho_{i},i\in\N)$. The following lemma relates $T_{n}$ to our original problem.
\begin{lem}
\label{lem:relation}Let $\gamma_{\mu_{1},\mu_{2}}$ and $\delta_{\mu_{1},\mu_{2}}$
be the same as in Theorem \ref{thm:OUresult}. Then 
\begin{equation}
  \gamma_{\mu_{1},\mu_{2}}=\lim_{n\to+\infty}\frac{\ev{}\sbr{-\log\norm{T_{n}}{}{}}}{n\ev{}r_{1}},
  \quad\delta_{\mu_{1},\mu_{2}}\leq\liminf_{n\to+\infty}\frac{\ev{}\sbr{-\log\ev{}\rbr{\norm{T_{n}}{}{}|\mathcal{F}}}}{n\ev{}r_{1}}.
  \label{eq:relationLemmaClaim}
\end{equation}
\end{lem}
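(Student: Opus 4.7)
The plan is to handle the two parts of \eqref{eq:relationLemmaClaim} separately, invoking the ergodic results of Section~\ref{sub:Proof-of-convergence} for the identity and Jensen's inequality combined with annealed estimates for the inequality.

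For the identity involving $\gamma_{\mu_1,\mu_2}$, I would first use \eqref{eq:tmp99} to rewrite $\norm{T_n}{}{}=\pr{\forall_{s\leq\rho_n}X_s\ge Y_s\mid Y}$ with $X_0=1$, $Y_0=0$. The last unnamed lemma of Section~\ref{sub:Proof-of-convergence}, applied with $a=0$ and $b=+\infty$, then yields the $L^1$-convergence $-\log\norm{T_n}{}{}/n\to\tilde\gamma$. Taking expectations and invoking Lemma~\ref{lem:proofGammaDef} to identify $\gamma_{\mu_1,\mu_2}=\tilde\gamma/\E r_1$ concludes this part.

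For the bound involving $\delta_{\mu_1,\mu_2}$, the natural starting point is Jensen's inequality applied to the convex map $x\mapsto-\log x$:
\[
  \ev{}\sbr{-\log\ev{}\rbr{\norm{T_n}{}{}\mid\mathcal F}}\;\ge\;-\log\ev{}\sbr{\ev{}\rbr{\norm{T_n}{}{}\mid\mathcal F}}\;=\;-\log\pr{\forall_{s\le\rho_n}X_s\ge Y_s},
\]
where the last step uses the tower property together with the identification of $\norm{T_n}{}{}$ above. The problem thus reduces to proving that
\[
  \liminf_{n\to+\infty}\frac{-\log\pr{\forall_{s\le\rho_n}X_s\ge Y_s}}{n\E r_1}\;\ge\;\delta_{\mu_1,\mu_2}.
\]
Combining the annealed decay rate from Lemma~\ref{lem:proofDeltaDef}, which gives $\pr{\forall_{s\le t}X_s\ge Y_s}=\exp(-\delta_{\mu_1,\mu_2}t(1+o(1)))$, with the sharp concentration of $\rho_n$ around $n\E r_1$ guaranteed by the exponential tails of the $r_i$ from Fact~\ref{fact:decompositon}, I would decompose, for $\epsilon>0$,
\[
  \pr{\forall_{s\le\rho_n}X_s\ge Y_s}\;\le\;\pr{\forall_{s\le(1-\epsilon)n\E r_1}X_s\ge Y_s}+\pr{\rho_n<(1-\epsilon)n\E r_1},
\]
and control each term separately: the first through the persistence rate of Lemma~\ref{lem:proofDeltaDef}, the second through Chernoff-type bounds obtained from the exponential moments of $r_1$.

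The main obstacle I anticipate lies precisely in this balancing: because the Cram\'er rate function for $\rho_n$ is only quadratic near its minimum $\E r_1$, one cannot naively send $\epsilon\to 0$ while retaining dominance of the deviation term. The resolution requires either a careful optimisation of $\epsilon$ (possibly depending on $n$) together with exploitation of the full strength of the exponential integrability from Fact~\ref{fact:decompositon}, or, alternatively, a refined multi-scale decomposition of $\pr{\forall_{s\le\rho_n}X_s\ge Y_s}$ into contributions from the range of $\rho_n$, each handled via Lemma~\ref{lem:proofDeltaDef} at the appropriate deterministic time. This step is where the bulk of the technical work resides.
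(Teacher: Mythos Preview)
Your treatment of the identity for $\gamma_{\mu_1,\mu_2}$ is correct and is exactly what the paper does: combine \eqref{eq:tmp99} with the $L^1$-convergence \eqref{eq:limitAll2} and the identification $\gamma_{\mu_1,\mu_2}=\tilde\gamma/\E r_1$ from Lemma~\ref{lem:proofGammaDef}.

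For the inequality involving $\delta_{\mu_1,\mu_2}$, however, the obstacle you flag is not merely technical: your route genuinely gives away too much. By applying Jensen \emph{unconditionally} at the outset you reduce to bounding $-\log\pr{\forall_{s\le\rho_n}X_s\ge Y_s}$, an annealed quantity at the \emph{random} time $\rho_n$. In your union bound
\[
  \pr{\forall_{s\le\rho_n}X_s\ge Y_s}\le\pr{\forall_{s\le(1-\epsilon)n\E r_1}X_s\ge Y_s}+\pr{\rho_n<(1-\epsilon)n\E r_1},
\]
the first term contributes rate $(1-\epsilon)\delta_{\mu_1,\mu_2}\E r_1$ and the second contributes the Cram\'er rate $I(\epsilon)$, which behaves like $c\epsilon^2$ near $0$. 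No choice of $\epsilon$ (fixed or $n$-dependent) makes both rates reach $\delta_{\mu_1,\mu_2}\E r_1$: sending $\epsilon\to0$ kills the second rate, keeping $\epsilon$ bounded away from $0$ kills the first. A multi-scale decomposition over the values of $\rho_n$ does not obviously help either, since the event $\{\rho_n\in[k,k+1)\}$ is a functional of $Y$ and is correlated with the persistence event $\{\forall_{s\le k}X_s\ge Y_s\}$ in a way that is not controlled by FKG.

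The paper sidesteps this entirely by exploiting the very structure you discarded with the first Jensen step. Since $\E(\|T_n\|\mid\mathcal F)=\pr{\forall_{s\le\rho_n}X_s\ge Y_s\mid\mathcal F}$ is $\mathcal F$-measurable and $\rho_n\in\mathcal F$, one can \emph{first} restrict to the event $\mathcal A_n=\{\rho_n\ge r(n)\}$ with $r(n)=n\E r_1-n^{2/3}$, use monotonicity to replace $\rho_n$ by the deterministic $r(n)$ inside the $\mathcal F$-conditional probability, and only \emph{then} apply Jensen to $\E\log\pr{\forall_{s\le r(n)}X_s\ge Y_s\mid\mathcal F}$. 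Because $r(n)/(n\E r_1)\to1$, Lemma~\ref{lem:proofDeltaDef} gives the sharp rate directly. The contribution from $\mathcal A_n^c$ is not handled by Chernoff at all, but by the observation that $1_{\mathcal A_n^c}\to0$ a.s.\ while $-\tfrac1n\log\pr{\forall_{s\le r(n)}X_s\ge Y_s\mid\mathcal F}$ is uniformly integrable (the paper cites the $L^1$-convergence of \eqref{eq:tmporaryConvergence}); hence $\E\bigl[1_{\mathcal A_n^c}\cdot\tfrac1n\log\pr{\cdot\mid\mathcal F}\bigr]\to0$. The moral is: keep the conditioning on $\mathcal F$ until after the random time has been frozen, and trade the large-deviation estimate on $\mathcal A_n^c$ for a uniform-integrability argument.
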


The following proposition is the main technical result of this proof 
\begin{prop}
\label{prop:mainPropositon}There exist $c>0$ and $n_{0}\in\mathbb{N}$
such that 
\begin{equation}
\ev{}\log\norm{T_{n}}{}{}-\ev{}\log\ev{}(\norm{T_{n}}{}{}|\mathcal{F})\leq-nc,\label{eq:smallAim}
\end{equation}
for any $n\geq n_{0}$.
\end{prop}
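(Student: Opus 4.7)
The plan is to express the gap on the left-hand side of \eqref{eq:smallAim} as a telescoping sum of $n$ conditional Jensen gaps and then bound each summand below by a uniform positive constant. Introduce the filtration $\mathcal{F}_i := \sigma(\mathcal{F} \cup \sigma(Y^0,\ldots,Y^{i-1}))$, so that $\mathcal{F}_0 = \mathcal{F}$ and $\|T_n\|$ is $\mathcal{F}_n$-measurable, and set $M_n^{(i)} := \log \E(\|T_n\|\,|\,\mathcal{F}_i)$, so that $M_n^{(n)} = \log \|T_n\|$ and $M_n^{(0)} = \log \E(\|T_n\|\,|\,\mathcal{F})$. The identity $M_n^{(i-1)} = \log \E(e^{M_n^{(i)}}\,|\,\mathcal{F}_{i-1})$, together with telescoping and the tower property, yields
\[
  \E \log \E(\|T_n\|\,|\,\mathcal{F}) - \E \log \|T_n\| = \sum_{i=1}^n \Delta_i, \qquad \Delta_i := \E\Bigl[\log \E(e^{M_n^{(i)}}\,|\,\mathcal{F}_{i-1}) - \E(M_n^{(i)}\,|\,\mathcal{F}_{i-1})\Bigr],
\]
with each $\Delta_i \geq 0$ by conditional Jensen. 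Proving \eqref{eq:smallAim} then reduces to showing $\Delta_i \geq c > 0$ uniformly in $i$ and $n \geq i$.

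To extract a per-level gap I would exploit the sign symmetry of the macroscopic excursion $Y^{i-1}$. Conditional on $\mathcal{F}_{i-1}$, the only randomness in $M_n^{(i)}$ comes from $Y^{i-1}$, whose macroscopic portion on $[\tau_{i-1},\rho_i]$ is a symmetric OU excursion around $0$. Writing $Y^{i-1} = \sigma |Y^{i-1}|$ on this portion with $\sigma \in \{-1,+1\}$ a fair coin independent of $|Y^{i-1}|$ and of $\mathcal{F}_{i-1}$, I would compare the two values of $M_n^{(i)}$ under $\sigma = -1$ (wall pushed down, survival easier) and $\sigma = +1$ (wall pushed up, survival harder). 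Using the definition of the operator $T$ and the log-concavity supplied by Lemma~\ref{lem:covexTail}, I expect these two values to differ by a positive constant on an event of uniformly positive conditional probability, which yields a lower bound on $\mathrm{Var}(M_n^{(i)}\,|\,\mathcal{F}_{i-1})$ and hence $\Delta_i \geq c > 0$ via the elementary inequality $\log \E(e^U|\mathcal{G}) - \E(U|\mathcal{G}) \geq c_0 \, \mathrm{Var}(U|\mathcal{G})$, valid as long as the fluctuations of $U$ are controlled.

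Turning this path-level comparison into a uniform bound requires restricting to a good event and invoking the technical lemmas. Lemma~\ref{lem:boundedOU} provides an event of positive probability on which the past excursions $Y|_{[0,\rho_{i-1}]}$ and future excursions $Y|_{[\rho_{i+1},\rho_n]}$ stay below a tent-shaped barrier of order $(1+s)^{1/3}$. On this event, Lemma~\ref{lem:uniformTightness} yields Gaussian upper tails for the normalized endpoint distribution $T_{i-1}/\|T_{i-1}\|$, uniformly in $i$, so the law of the starting point of $X$ at time $\rho_{i-1}$ remains concentrated at scale $O(1)$. Similarly, Lemma~\ref{lem:forwardConditioningConcerntration} controls the conditional law of $X_{\rho_i}$ given future survival at the same scale. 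Combining these, the ratio of $\E(\|T_n\| \,|\, \mathcal{F}_i)$ under $\sigma = -1$ versus $\sigma = +1$ can be estimated by integrating Gaussian densities against bounded regions, producing a ratio bounded away from $1$ by constants depending only on the tent heights.

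The main obstacle is precisely this uniformity in $i$ and $n$. A priori, iterating the operator $T$ could push the conditional endpoint law $T_{i-1}/\|T_{i-1}\|$ arbitrarily far to the right as survival becomes a rarer event, which would make the $\sigma = \pm 1$ distinction asymptotically irrelevant. Lemma~\ref{lem:uniformTightness}, applied against a tent barrier, is designed to prevent exactly this by keeping the distribution at scale $O(1)$ conditionally on the good event. Assembling the good event, the endpoint concentration, the forward conditioning, and the sign-flip symmetry into a quantitative lower bound on $\Delta_i$ that does not degenerate as $i$ or $n$ grows is the heart of the argument; once this is achieved, summing over $i \in \{1,\ldots,n\}$ delivers \eqref{eq:smallAim} with $c$ the per-level constant.
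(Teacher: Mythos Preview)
Your proposal is correct and follows essentially the same route as the paper: telescope via the filtration $\mathcal{F}_k$, exploit the sign-flip symmetry of the macroscopic excursion on $[\tau_k,\rho_{k+1}]$, and invoke Lemmas~\ref{lem:uniformTightness}, \ref{lem:forwardConditioningConcerntration}, \ref{lem:boundedOU} on a good event to keep the conditional endpoint laws $T_k/\|T_k\|$ and $\mu_{x,n,k}$ concentrated at scale $O(1)$ uniformly in $k,n$. The only tactical difference is that the paper, after conditioning on $|Y^k|$, bounds the binary Jensen gap directly via the explicit two-point inequality $\tfrac12\log g - \log\tfrac{g+1}{2} \le -\tfrac18(g-1)^2$ applied to the ratio $g_{n,k}\in(0,1]$ of the two sign-flipped survival probabilities, thereby sidestepping any need to control the full fluctuations of $M_n^{(i)}$ that your variance-based inequality would require (and Lemma~\ref{lem:covexTail} is not actually used here).
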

We observe that this proposition together with Lemma \ref{lem:relation} imply (\ref{eq:mainAim}).

\begin{proof}[Proof of Lemma \ref{lem:relation}] The first convergence in (\ref{eq:relationLemmaClaim}) holds by \eqref{eq:limitAll2} (recall also relation between $\tilde{\gamma}$ and $\gamma_{\mu_{1},\mu_{2}}$ given in (\ref{eq:xyz-1})). We observe that (\ref{eq:tmp99}) yields $\ev{}\rbr{\norm{T_{n}}{}{}|\mathcal{F}}=\pr{\forall_{s\leq\rho_{n}}X_{s}\geq Y_{s}|\mathcal{F}}$. We note that methods of Section \ref{sub:Proof-of-convergence} imply that 
\begin{equation}
  \frac{-\log\pr{\forall_{s\leq\rho_{n}}X_{s}\geq Y_{s}|\mathcal{F}}}{n}
  \label{eq:tmporaryConvergence}
\end{equation}
converges a.s. and in $L^{1}$ for the sake of brevity we skip details. We define $r(n)\eqdef\ev{}\rho_{n}-n^{2/3}=n\ev{}r_{1}-n^{2/3}$ and a sequence of events $\mathcal{A}_{n}\eqdef\cbr{\rho_{n}\geq r(n)}$. Using Fact \ref{fact:decompositon} one proves $1_{\mathcal{A}_{n}^{c}}\to0$ a.s. Consequently the convergence of (\ref{eq:tmporaryConvergence}) implies 
\[
  \lim_{n\to+\infty}\frac{\ev{}1_{\mathcal{A}_{n}^{c}}\log\pr{\forall_{s\leq r(n)}X_{s}\geq Y_{s}|\mathcal{F}}}{n}=0.
\]
Using $\ev{}\rbr{\norm{T_{n}}{}{}|\mathcal{F}}\leq1$ we estimate
\begin{align}
  \ev{}\log\ev{}\rbr{\norm{T_{n}}{}{}|\mathcal{F}}
  &\leq\ev{}1_{\mathcal{A}_{n}}\log\ev{}\rbr{\norm{T_{n}}{}{}|\mathcal{F}}\nonumber\\
  &\leq\ev{}1_{\mathcal{A}_{n}}\log\pr{\forall_{s\leq r(n)}X_{s}\geq Y_{s}|\mathcal{F}}\nonumber \\
  &=\ev{}\log\pr{\forall_{s\leq r(n)}X_{s}\geq Y_{s}|\mathcal{F}}-\ev{}1_{\mathcal{A}_{n}^{c}}\log\pr{\forall_{s\leq r(n)}X_{s}\geq Y_{s}|\mathcal{F}}.\label{eq:tmp19} 
\end{align}
We denote the first term of the right-hand side of (\ref{eq:tmp19}) by $J_{n}$. Applying Jensen's inequality we get 
\[
  J_{n}\leq\log\pr{\forall_{s\leq r(n)}X_{s}\geq Y_{s}}.
\]
By (\ref{eq:deltaDef}) and the definition of $r(n)$ we have $\limsup_{n}J_{n}/(n\ev{}r_{1})\leq-\delta_{\mu_{1},\mu_{2}}$ and the second term (\ref{eq:tmp19}) can be shown to converge to $0$. We conclude that the second claim of (\ref{eq:relationLemmaClaim}) holds.
\end{proof}


\subsection{Proof of Proposition \ref{prop:mainPropositon}}

\begin{proof}[Proof of Proposition \ref{prop:mainPropositon}]
We recall $\mathcal{F}=\sigma(\rho_{i},i\in\N)$ and define a filtration
$\cbr{\mathcal{F}_{k}}_{k\geq0}$ by putting $\mathcal{F}_{0}\eqdef\cbr{\emptyset,\Omega}$
and 
\[
\mathcal{F}_{k}\eqdef\sigma\cbr{Y^{i}:i<k},\quad k>0,
\]
(see also Figure \ref{fig:notation}). We recall (\ref{eq:defTn})
and for $k\in\cbr{0,1,\ldots,n}$ define $\meas$-valued random variables
$T_{n}^{k}$ by 
\[
T_{n}^{k}\eqdef\ev{}(T_{n}|\mathcal{F}_{k},\mathcal{F}).
\]
This definition and (\ref{eq:tmp99}) imply that 
\begin{equation}
T_{n}^{k}=\mathbb{P}\rbr{\forall_{s\leq\rho_{n}}X_{s}\geq Y_{s},X_{\rho_{n}}\in\dd x|\cbr{Y^{i}}_{i<k},\mathcal{F}},\quad T_{n}^{n}=T_{n}.\label{eq:tnk}
\end{equation}
By the Markov property of $X$ we have
\begin{equation}
\log\norm{T_{n}^{k+1}}{}{}=\log\norm{T_{k}}{}{}+\log\mathbb{P}_{T_{k}/\norm{T_{k}}{}{}}\rbr{\forall_{\rho_{k}\leq s\leq\rho_{n}}X_{s}\geq Y_{s}|Y^{k},\mathcal{F}}.\label{eq:statartingDecomposition}
\end{equation}
This expression requires some comment. We recall that $T_{k}$ is
a random measure, conditionally on $m=T_{k}/\norm{T_{k}}{}{}$ we
understand $X$ to be an Ornstein-Uhlenbeck process starting from
$m$ at time $\rho_{k}$. Let us now denote 
\[
G_{n,k}\eqdef\E\sbr{\log\mathbb{P}_{T_{k}/\norm{T_{k}}{}{}}\rbr{\forall_{\rho_{k}\leq s\leq\rho_{n}}X_{s}\geq Y_{s}|Y^{k},\mathcal{F}}|\mathcal{F}_{k},\mathcal{F}}-\log\mathbb{P}_{T_{k}/\norm{T_{k}}{}{}}\rbr{\forall_{\rho_{k}\leq s\leq\rho_{n}}X_{s}\geq Y_{s}|\mathcal{F}}.
\]
We notice that $G_{n,k}$ is a random variable, which by Jensen's
inequality fulfills $G_{n,k}\leq0$ (we will prove strict inequality
later). In this notation (\ref{eq:statartingDecomposition}) yields
\[
\E\sbr{\log\norm{T_{n}^{k+1}}{}{}|\mathcal{F}_{k},\mathcal{F}}=\log\norm{T_{k}}{}{}+\log\mathbb{P}_{T_{k}/\norm{T_{k}}{}{}}\rbr{\forall_{\rho_{k}\leq s\leq\rho_{n}}X_{s}\geq Y_{s}|\mathcal{F}}+G_{n,k}=\log\norm{T_{n}^{k}}{}{}+G_{n,k}.
\]
We apply this relation iteratively
\begin{align*}
\ev{}\sbr{\log\norm{T_{n}}{}{}|\mathcal{F}}=\ev{}\sbr{\log\norm{T_{n}^{n}}{}{}|\mathcal{F}} & =\ev{}\sbr{\ev{}\sbr{\log\norm{T_{n}^{n}}{}{}|\mathcal{F}_{n-1},\mathcal{F}}|\mathcal{F}}\\
 & =\ev{}\sbr{\log\norm{T_{n}^{n-1}}{}{}|\mathcal{F}}+\ev{}[G_{n,n-1}|\mathcal{F}]\\
 & =\ev{}\sbr{\ev{}\sbr{\log\norm{T_{n}^{n-1}}{}{}|\mathcal{F}_{n-2},\mathcal{F}}}+\ev{}[G_{n,n-1}|\mathcal{F}]\\
 & =\ev{}\sbr{\log\norm{T_{n}^{n-2}}{}{}|\mathcal{F}}+\ev{}[G_{n,n-2}|\mathcal{F}]+\ev{}[G_{n,n-1}|\mathcal{F}]\\
 & =\ldots\\
 & =\ev{}\sbr{\log\norm{T_{n}^{0}}{}{}|\mathcal{F}}+\sum_{k=0}^{n-1}\ev{}[G_{n,k}|\mathcal{F}].\\
\end{align*}
We notice that $\norm{T_{n}^{0}}{}{}=\mathbb{P}\rbr{\forall_{s\leq\rho_{n}}X_{s}\geq Y_{s}|\mathcal{F}}=\E\sbr{\norm{T_{n}}{}{}|\mathcal{F}}$.
Thus 
\[
\ev{}\sbr{\log\norm{T_{n}}{}{}}=\E\log\ev{}\sbr{\norm{T_{n}}{}{}|\mathcal{F}}+\sum_{k=0}^{n-1}\E G_{n,k}.
\]
One easily sees that an inequality 
\begin{equation}
\E G_{n,k}\leq c,\label{eq:gapGoal}
\end{equation}
for some $c<0$, is sufficient to conclude the proof of the proposition.
Proving (\ref{eq:gapGoal}) is our aim now. To avoid heavy notation
we denote $\tilde{\E}(\cdot)\eqdef\E(\cdot|\mathcal{F})$ and $\tilde{\E}_{k}(\cdot)\eqdef\tilde{\E}(\cdot|\mathcal{F}_{k})$.
\textcolor{blue}{}\global\long\def\ev#1{\tilde{\mathbb{E}}{#1}}
\textcolor{blue}{}\global\long\def\E{\tilde{\mathbb{E}}}
\textcolor{blue}{}\global\long\def\pp{\tilde{\mathbb{P}}}
\textcolor{blue}{}\global\long\def\pr#1{\tilde{\mathbb{P}}\rbr{#1}}

Further, we introduce additional randomization: a probability measure
$\mathbb{P}_{\pm}$ and the random variable $\eta$ such that $\mathbb{P}_{\pm}(\eta=1)=\mathbb{P}_{\pm}(\eta=-1)=1/2$.
For $k\in\mathbb{N}$ we define $\cbr{\tilde{Y}_{s}^{k}(\eta)}_{s\geq0}$
by 
\[
\tilde{Y}_{s}^{k}(\eta)\eqdef\begin{cases}
\eta|Y_{s}| & \text{if }s\in[\tau_{k},\rho_{k+1}]\\
Y_{s} & \text{otherwise.}
\end{cases}.
\]
There are two easy but crucial observations to be made at this point.
Firstly,
\begin{equation}
\forall_{s\geq0}\tilde{Y}_{s}^{k}(1)\geq\tilde{Y}_{s}^{k}(-1)\text{ and }\forall_{s\in(\tau_{k},\rho_{k+1})}\tilde{Y}_{s}^{k}(1)>\tilde{Y}_{s}^{k}(-1).\label{eq:ytildeDominance}
\end{equation}
Secondly, the excursions of an Ornstein-Uhlenbeck process are symmetric
around $0$. Formally, under $\mathbb{E}_{\pm}\otimes\E_{k}$ the
process $\tilde{Y}^{k}(\eta)$ has the same law as $Y$ under $\E_{k}$.
Let us shorten $m\eqdef T_{k}/\norm{T_{k}}{}{}$ and denote ``the
gap''
\begin{equation}
\Delta_{n,k}\eqdef\mathbb{E}_{\pm}\E_{k}\log\pp_{m}\rbr{\forall_{\rho_{k}\leq s\leq\rho_{n}}X_{s}\geq\tilde{Y}_{s}^{k}(\eta)|Y^{k}}-\E_{k}\log\mathbb{E}_{\pm}\pp_{m}\rbr{\forall_{\rho_{k}\leq s\leq\rho_{n}}X_{s}\geq\tilde{Y}_{s}^{k}(\eta)|Y^{k}}.\label{eq:mindTheGap}
\end{equation}
By Jensen's inequality we have $G_{n,k}\leq\Delta_{n,k}\leq0$. In
order to show (\ref{eq:gapGoal}) we will obtain a bound from above
on $\Delta_{n,k}$ which is strictly negative and uniform in $n,k$.
We define
\begin{equation}
g_{n,k}\eqdef\frac{\pp_{m}\rbr{\forall_{\rho_{k}\leq s\leq\rho_{n}}X_{s}\geq\tilde{Y}_{s}^{k}(1)|Y^{k}}}{\pp_{m}\rbr{\forall_{\rho_{k}\leq s\leq\rho_{n}}X_{s}\geq\tilde{Y}_{s}^{k}(-1)|Y^{k}}},\label{eq:gnk}
\end{equation}
and $z_{n,k}\eqdef\pp_{m}\rbr{\forall_{\rho_{k}\leq s\leq\rho_{n}}X_{s}\geq\tilde{Y}_{s}^{k}(-1)|Y^{k}}$.
In this notation (\ref{eq:mindTheGap}) writes as 
\begin{align}
\Delta_{n,k} & =\E_{k}\sbr{\frac{1}{2}\rbr{\log(g_{n,k}z_{n,k})+\log z_{n,k}}-\log\rbr{\frac{g_{n,k}z_{n,k}+z_{n,k}}{2}}}=\E_{k}\sbr{\frac{1}{2}\log g_{n,k}-\log\rbr{\frac{g_{n,k}+1}{2}}}\label{eq:DeltaGapEstimate}\\
 & \leq-\frac{1}{8}\E_{k}(g_{n,k}-1)^{2}.\nonumber 
\end{align}
To explain the last inequality we observe that (\ref{eq:ytildeDominance})
yields $g_{n,k}\leq1$ and that for $x\in(0,1]$ we have and elementary
inequality $\frac{1}{2}\log x-\log\rbr{\frac{x+1}{2}}\leq-\frac{1}{8}(x-1)^{2}.$
Now we concentrate on proving that in fact, uniformly in $n,k$ we
have $g_{n,k}<1$. Let us analyze the expressions appearing in (\ref{eq:gnk}).
We denote $\mathbb{Q}_{m,k}(\cdot)\eqdef\mathbb{P}(\cdot|Y^{k})$,
$\mathcal{A}_{i}\eqdef\cbr{\forall_{\rho_{k}\leq s\leq\rho_{k+1}}X_{s}\geq\tilde{Y}_{s}^{k}(i)}$
and $\mathcal{B}\eqdef\cbr{\forall_{\rho_{k+1}\leq s\leq\rho_{n}}X_{s}\geq Y_{s}}$.
We fix $x\in\R_{+}$ and we want to find a formula for $p_{i}:=\mathbb{Q}_{\delta_{x},k}(\mathcal{A}_{i}\cap\mathcal{B})$.
By the Markov property we get 
\[
p_{i}=\mathbb{Q}_{\delta_{x},k}\sbr{\mathbb{Q}_{\delta_{x},k}(1_{\mathcal{A}_{i}}|X_{\rho_{k+1}})\mathbb{Q}_{\delta_{x},k}(1_{\mathcal{B}}|X_{\rho_{k+1}})}.
\]
We denote $L_{k}(x,y;i)\eqdef\mathbb{Q}_{\delta_{x},k}(1_{\mathcal{A}_{i}}|X_{\rho_{k+1}}=y)$,
which expresses more explicitly as 
\begin{equation}
L_{k}(x,y;i)=\pp\rbr{\forall_{\rho_{k}\leq s\leq\rho_{k+1}}X_{s}\geq\tilde{Y}_{s}^{k}(i)|Y^{k},X_{\rho_{k}}=x,X_{\rho_{k+1}}=y}.\label{eq:Ldef}
\end{equation}
We write
\[
p_{i}=\mathbb{Q}_{\delta_{x},k}\sbr{L(x,X_{\rho_{k+1}};i)\frac{\mathbb{Q}_{\delta_{x},k}(1_{\mathcal{B}}|X_{\rho_{k+1}})}{\mathbb{Q}_{\delta_{x},k}(1_{\mathcal{B}})}}\times\mathbb{Q}_{\delta_{x},k}(\mathcal{B}).
\]
Let us now consider a measure defined by 
\[
\mu_{x,n,k}(X_{\rho_{k+1}}\in D)\eqdef\mathbb{Q}_{\delta_{x},k}\sbr{1_{X_{\rho_{k+1}}\in D}\frac{\mathbb{Q}_{\delta_{x},k}(1_{\mathcal{B}}|X_{\rho_{k+1}})}{\mathbb{Q}_{\delta_{x},k}(1_{\mathcal{B}})}},
\]
where $D\subset\R$ is a Borel set. One easily verifies that it is
a probability measure. Removing the conditional expectation we get
\begin{multline*}
\mu_{x,n,k}(X_{\rho_{k+1}}\in D)=\mathbb{Q}_{\delta_{x},k}\sbr{\frac{\mathbb{Q}_{\delta_{x},k}(1_{X_{\rho_{k+1}}\in D}1_{\mathcal{B}}|X_{\rho_{k+1}})}{\mathbb{Q}_{\delta_{x},k}(1_{\mathcal{B}})}}=\frac{\mathbb{Q}_{\delta_{x},k}(1_{X_{\rho_{k+1}}\in D}1_{\mathcal{B}})}{\mathbb{Q}_{\delta_{x},k}(1_{\mathcal{B}})}\\
=\mathbb{Q}_{\delta_{x},k}\left(X_{\rho_{k+1}}\in D|\mathcal{B}\right).
\end{multline*}
Again, writing more explicitly we have
\begin{equation}
\mu_{x,n,k}(\dd y)=\pp\rbr{X_{\rho_{k+1}}\in\dd y|\forall_{\rho_{k+1}\leq s\leq\rho_{n}}X_{s}\geq Y_{s},X_{\rho_{k}}=x}.\label{eq:muxnk}
\end{equation}
Finally, concluding the above calculations we obtain that for $i\in\cbr{-1,1}$
we have 
\begin{align}
\pp_{m}\rbr{\forall_{\rho_{k}\leq s\leq\rho_{n}}X_{s}\geq\tilde{Y}_{s}^{k}(i)|Y^{k}} & =\int_{\R_{+}}\int_{\R_{+}}L_{k}(x,y;i)\mu_{x,n,k}(\dd{y)}m(\dd x)\label{eq:pDecomposition}\\
 & \quad\times\pp_{m}\rbr{\forall_{\rho_{k+1}\leq s\leq\rho_{n}}X_{s}\geq Y_{s}}.\nonumber 
\end{align}
Before going further let us comment on the further strategy. It is
easy to see that for any fixed $x,y\in\R_{+}$ we have $L_{k}(x,y;1)<L_{k}(x,y;-1)$.
The gap vanished however smaller when $x,y\to+\infty$. The uniform
inequality $g_{n,k}<1$ can be obtained by showing that with positive
probability the measure $\mu_{x,n,k}(\dd y)m(\dd x)$ is uniformly
concentrated in a box. 

Let $C_{1}>0$ be a constant as in Fact \ref{lem:boundedOU}. We denote
sequences of events
\begin{align*}
\mathcal{A}_{n,k} & \eqdef\mathcal{A}_{k}^{1}\cap\cbr{r_{k}\in[1,10]}\cap\mathcal{A}_{n,k}^{2},\\
\mathcal{A}_{k}^{1} & \eqdef\cbr{\rho_{k}\geq1}\cap\cbr{\forall_{s\leq\rho_{k}}Y_{s}\leq C_{1}b_{\rho_{k}}(s)},\\
\mathcal{A}_{n,k}^{2} & \eqdef\cbr{\pr{\mathcal{B}_{n,k}|\mathcal{F}}\geq1/10},
\end{align*}
where $\mathcal{B}_{n,k}\eqdef\cbr{\forall_{\rho_{k+1}\leq s\leq\rho_{n}}Y_{s}\leq C_{1}(s-\rho_{k+1}+1)^{1/3}}$.
We first prove concentration of $m=T_{k}/\norm{T_{k}}{}{}$ (recall
(\ref{eq:tmp99})). Let $R>0$, by the FKG property stated in Lemma
\ref{lem:StrongFKG}, conditionally on the event $\mathcal{A}_{k}^{1}$
we have
\begin{align*}
\rbr{T_{k}/\norm{T_{k}}{}{}}([R^{-1},R]) & =\pr{X_{\rho_{k}}\in[0,R]|\forall_{s\leq\rho_{k}}X_{s}\geq Y_{s},Y}-\pr{X_{\rho_{k}}\leq R^{-1}|\forall_{s\leq\rho_{k}}X_{s}\geq Y_{s},Y}\\
 & \geq\pr{X_{\rho_{k}}\in[R^{-1},R]|\forall_{s\leq\rho_{k}}X_{s}\geq C_{1}b_{\rho_{k}}(s)}-\pr{X_{\rho_{k}}\leq R^{-1}|X_{\rho_{k}}\geq0}.
\end{align*}
Using Lemma \ref{lem:uniformTightness} we can choose $R>0$ such
that the first term is arbitrarily close to $1$. By easy calculations
the second term can be made arbitrarily close to $0$.\textcolor{blue}{{}
}We fix $R$ such that 
\begin{equation}
\rbr{T_{k}/\norm{T_{k}}{}{}}([R^{-1},R])\geq1_{\mathcal{A}_{k}^{1}}(1/2).\label{eq:conditionalMeasure}
\end{equation}
Our next aim is to study concentration of (\ref{eq:muxnk}). To this
end we denote $$p(x)\eqdef\pp\rbr{X_{\rho_{k+1}}\in[0,C_{e}]|\forall_{\rho_{k+1}\leq s\leq\rho_{n}}X_{s}\geq Y_{s},X_{\rho_{k}}=x},$$
where $C_{e}>0$ is to be fixed later. Using Fact \ref{fact:OUfact}
we estimate 
\begin{align*}
p(x)&\geq  \pp\rbr{\cbr{X_{\rho_{k+1}}\in[0,C_{e}]}\cap\mathcal{B}_{n,k}|\forall_{\rho_{k+1}\leq s\leq\rho_{n}}X_{s}\geq Y_{s},X_{\rho_{k}}=x}\\
 &=  \pp\rbr{X_{\rho_{k+1}}\in[0,C_{e}]|\mathcal{B}_{n,k}\cap\cbr{\forall_{\rho_{k+1}\leq s\leq\rho_{n}}X_{s}\geq Y_{s},X_{\rho_{k}}=x}}\\
 &\qquad \qquad \qquad\qquad \qquad \qquad\qquad \qquad \qquad \times \pp\rbr{\mathcal{B}_{n,k}|\forall_{\rho_{k+1}\leq s\leq\rho_{n}}X_{s}\geq Y_{s},X_{\rho_{k}}=x}\nonumber \\
&\geq  \pp\rbr{X_{\rho_{k+1}}\in[0,C_{e}]|\forall_{\rho_{k+1}\leq s\leq\rho_{n}}X_{s}\geq C_{1}(s-\rho_{k+1}+1)^{1/3},X_{\rho_{k}}=x}\pp\rbr{\mathcal{B}_{n,k}}.\nonumber 
\end{align*}
Assuming that $r_{k}=\rho_{k+1}-\rho_{k}\in[1,10]$, we can set
$C_{e}$ such that Lemma \ref{lem:forwardConditioningConcerntration}
implies that the first term is bounded away from $0$ uniformly in
$n,k$ and $x\in[R^{-1},R]$. Together with (\ref{eq:conditionalMeasure})
this implies 
\[
\int_{R^{-1}}^{R}\mu_{x,n,k}([0,C_{e}])m(\dd x)\geq C1_{\mathcal{A}_{n,k}},
\]
for some $C>0$. One further finds $c_{e}>0$ (we skip details) such
that

\begin{equation}
\int_{R^{-1}}^{R}\mu_{x,n,k}([c_{e},C_{e}])m(\dd x)\geq(C/2)1_{\mathcal{A}_{n,k}}.\label{eq:abc}
\end{equation}
We are now ready to come back to (\ref{eq:gnk}). We recall (\ref{eq:Ldef})
and denote

\begin{equation}
r_{k}\eqdef\sup_{x\in[R^{-1},R],y\in[c_{e},C_{e}]}\frac{L_{k}(x,y;1)}{L_{k}(x,y;-1)},\quad L_{k}\eqdef\inf_{x\in[R^{-1},R],y\in[c_{e},C_{e}]}L_{k}(x,y;-1).\label{eq:defr}
\end{equation}
We have $L_{k}(x,y;-1)>L_{k}(x,y;1)$. Moreover, one verifies that
for $\eta\in\{-1,1\}$ the functions $L_{k}(\cdot,\cdot;\eta)$ are
continuous and $L_{k}(x,y;\eta)>0$ if $x,y>0$. These imply $r_{k}<1$
(note that $r_{k}$ is a random variable since it depends on $Y^{k}$).
Similarly we have $L_{k}>0$. One checks that 
\[\int_{\R_{+}}\int_{\R_{+}}L_{k}(x,y;i)\mu_{x,k,n}(\dd{y)}m(\dd x)\leq1.\]
Using the elementary inequality $(a+c)/(b+d)\leq(a+1)/(b+1)$ valid
for $0<a\leq b$ and $0<c\leq d\leq1$ we get
\[
g_{n,k}\leq\frac{\int_{R^{-1}}^{R}\int_{c_{e}}^{C_{e}}L_{k}(x,y;1)\mu_{x,n,k}(\dd{y)}m(\dd x)+1}{\int_{R^{-1}}^{R}\int_{c_{e}}^{C_{e}}L_{k}(x,y;-1)\mu_{x,n,k}(\dd{y)}m(\dd x)+1}\leq\frac{r_{k}\int_{R^{-1}}^{R}\int_{c_{e}}^{C_{e}}L_{k}(x,y;-1)\mu_{x,n,k}(\dd{y)}m(\dd x)+1}{\int_{R^{-1}}^{R}\int_{c_{e}}^{C_{e}}L_{k}(x,y;-1)\mu_{x,n,k}(\dd{y)}m(\dd x)+1}.
\]
Further, we notice that for $r_{k}\in[0,1]$ we have $(r_{k}a+1)/(a+1)\leq(r_{k}b+1)/(b+1)$
if $b\leq a$. Applying (\ref{eq:abc}) we get
\[
g_{n,k}\leq\frac{r_{k}L_{k}\int_{R^{-1}}^{R}\mu_{x,n,k}([c_{e},C_{e}])m(\dd x)+1}{L_{k}\int_{R^{-1}}^{R}\mu_{x,n,k}([c_{e},C_{e}])m(\dd x)+1}\leq\frac{r_{k}L_{k}C/2+1}{L_{k}C/2+1}\leq1+\frac{CL_{k}\rbr{r_{k}-1}}{4},
\]
where the last estimate follows by $(ab+1)/(a+1)\leq1+a(b-1)/2$ valid
for $a,b\in[0,1]$. Combining the last inequality with (\ref{eq:DeltaGapEstimate})
we arrive at 
\begin{equation}
G_{n,k}\leq-C_{3}\E_{k}\sbr{1_{\mathcal{A}_{n,k}}L_{k}^{2}\rbr{r_{k}-1}^{2}}.\label{eq:gtildeDef}
\end{equation}
for a constant $C_{3}>0$. \textcolor{blue}{}\global\long\def\ev#1{\mathbb{E}{#1}}
\textcolor{blue}{}\global\long\def\E{\mathbb{E}}
\textcolor{blue}{}\global\long\def\pp{\mathbb{P}}
\textcolor{blue}{}\global\long\def\pr#1{\mathbb{P}\rbr{#1}}

We are now ready to show (\ref{eq:gapGoal}) (which concludes the
proof of the proposition). By the strong Markov property we have
\[
\E G_{n,k}\leq-C_{3}\E\sbr{1_{\mathcal{A}_{n,k}}L_{k}^{2}\rbr{r_{k}-1}^{2}}\leq-C_{3}\pr{\mathcal{A}_{k}^{1}}\E\sbr{L_{k}^{2}\rbr{r_{k}-1}^{2}1_{r_{k}\in[1,10]}}\pr{\mathcal{A}_{n,k}^{2}}.
\]
 We notice that the law of $L_{k}^{2}\rbr{r_{k}-1}^{2}1_{r_{k}\in[1,10]}$
does not depend on $k$ and it is not concentrated on $0$, thus $\pr{\mathcal{A}_{k}^{1}}\E\sbr{L_{k}^{2}\rbr{r_{k}-1}^{2}1_{r_{k}\in[1,10]}}>0$.
The other two terms are uniformly bounded away from $0$ by Fact \ref{lem:boundedOU}.
\end{proof}

\begin{proof}[Proof of Theorem~\ref{thm:OUresult}]
As observed at the beginning of the section, Lemmas~\ref{lem:proofGammaDef} and \ref{lem:fungamma}, as well as Lemmas~\ref{lem:proofDeltaDef} and \ref{lem:fundelta} respectively prove there exist two convex symmetric functions $\gamma,\delta$ such that for all $\beta \in \R$,
\[  \lim_{t\to+\infty}\frac{\log\mathbb{P}\rbr{\forall_{s\leq t}X_{s}\geq\beta Y_{s}, X_t - \beta Y_t \in (a,b)|Y}}{t} = -\gamma(\beta)\]
\[\lim_{t\to+\infty}\frac{\log\mathbb{P}\rbr{\forall_{s\leq t}X_{s}\geq\beta Y_{s},X_t - \beta Y_t \in (a,b)}}{t}= -\delta(\beta).\]

Moreover, by Lemma~\ref{lem:relation} and Proposition~\ref{prop:mainPropositon}, for any $\beta \neq 0$, there exists $c>0$ such that
\[
  \gamma(\beta) \geq \delta(\beta) + c,
\]
proving that $\gamma(\beta)>\delta(\beta)$ for any $\beta \neq 0$.
\end{proof}

\section{Proof of Theorems \ref{thm:BMBasic} and \ref{thm:BMExtended}}
\label{sec:BMProofs}

As observed in Remark \ref{rem:BMtoOU}, Theorem \ref{thm:BMBasic} is a direct consequence of Theorem \ref{thm:OUresult}. We now extend this result to prove Theorem \ref{thm:BMExtended}.

\begin{proof}[Proof of Theorem \ref{thm:BMExtended}]
For simplicity, we assume here that $a=0$ and $b=+\infty$, therefore we ignore the condition $B_t - \beta W_t \in (a t^{1/2}, b t^{1/2})$. The proof in the general case is obtained in the same fashion, but with heavier notation.

Denoting $x\eqdef\inf_{t\geq0}g(t)$,
one can find $A>0$ and $\epsilon>0$ such that 
\[
-x/2-Aj(t)\leq f(t)\leq x/2+Aj(t),\quad t\geq0,
\]
where $j(t)\eqdef\min(t,t^{1/2-\epsilon})$. We have 
\begin{multline}
\pr{\forall_{s\leq t}x/2+B_{s}\geq W_{s}+Aj(s)|W}\leq\pr{\forall_{s\leq t}x+B_{s}\geq W_{s}+f(s)|W}\\
\leq\pr{\forall_{s\leq t}g(t)+B_{s}\geq W_{s}+f(s)|W}\leq\pr{\forall_{s\leq t}g(t)+B_{s}\geq W_{s}-Aj(s)|W}\\
\leq\pr{\forall_{h(t)\leq s\leq t}g(t)+x/2+B_{s}\geq W_{s}-Aj(s)|W},\label{eq:sandwich}
\end{multline}
where $h(t)$ is any function such that $(h(t))^{1/2-\epsilon}\geq g(t)+x+1$
and $h(t)=e^{o(\log t)}$. The right-hand side of the last expression
is bounded from above by $\pr{\forall_{h(t)\leq s\leq t}1+B_{s}\geq W_{s}-(A+1)j(s)|W}$.
We will show that the event

\begin{equation}
\mathcal{A}\eqdef\cbr{\lim_{t\to+\infty}-\frac{\log\pr{\forall_{h(t)\leq s\leq t}1+B_{s}\geq W_{s}-Aj(s)|W}}{\log t}=\gamma(\beta)},\label{eq:assertion1}
\end{equation}
fulfills $\pr{\mathcal{A}}=1$. The same method can be used to show
the almost sure convergence of the left-hand side of (\ref{eq:sandwich})
(we skip the details). These will conclude the proof. We define a stochastic process $\cbr{Z_{t}}_{t\geq0}$ by 
\[
Z_{t}\eqdef\exp\rbr{\int_{0}^{t}Aj'(s)\dd W_{s}-\frac{1}{2}\int_{0}^{t}[Aj'(s)]^{2}\dd s}.
\]
This process is an uniformly integrable martingale (since $\int_{0}^{+\infty}[j'(s)]^{2}\dd s\leq+\infty$).
We denote its limit by $Z_{\infty}$, that is $\mathbb{P}$-a.s. positive, and define the measure (on the
Wiener space) $\dd{\mathbb{Q}}=Z_{\infty}\dd{\mathbb{P}}.$ By the
Girsanov theorem under this measure $\cbr{W_{s}-Aj(s)}_{s\geq0}$
is a standard Wiener process. We prove that
\begin{equation}
\mathbb{Q}(\mathcal{A})=1,\label{eq:toBeProven}
\end{equation}
which is enough to conclude the proof. Indeed, as $\pr{Z_{\infty}>0}=1$ and $\ev{}1_{\mathcal{A}}Z_{\infty}=\mathbb{Q}(\mathcal{A})=1$, we have
 $\pr{\mathcal{A}}=1$. We are now going to show (\ref{eq:toBeProven}).
By Theorem \ref{thm:BMBasic} it is enough to prove 
\[
0\leq\delta_{t}\eqdef-\rbr{\frac{\log\pr{\forall_{0\leq s\leq t}1+B_{s}\geq W_{s}|W}}{\log t}-\frac{\log\pr{\forall_{h(t)\leq s\leq t}1+B_{s}\geq W_{s}|W}}{\log t}}\to0,\quad\mathbb{P}-\text{a.s.}
\]
We have
\begin{align*}
\delta_{t} & =-\frac{\log\mathbb{P}\rbr{\forall_{s\leq h(t)}1+B_{s}\geq W_{s}|\forall_{h(t)\leq s\leq t}1+B_{s}\geq W_{s},W}}{\log t}\\
 & \leq-\frac{\log\mathbb{P}\rbr{\forall_{s\leq h(t)}1+B_{s}\geq W_{s}|W}}{\log t},
\end{align*}
where the last inequality holds by Lemma \ref{lem:StrongFKG}.
Now the proof is straightforward, indeed 
\[
-\frac{\log\rbr{\forall_{0\leq s\leq h(t)}1+B_{s}\geq W_{s},W}}{\log t}=-\frac{\log\mathbb{P}\rbr{\forall_{0\leq s\leq h(t)}1+B_{s}\geq W_{s},W}}{\log h(t)}\frac{\log h(t)}{\log t}\to0,\quad\mathbb{P}-\text{a.s.}
\]
Theorem \ref{thm:BMBasic} and $\log h(t)/\log t\to0$. 

In order to prove the $L^{p}$ convergence it is enough to show that the family 
\[
\cbr{-\log\pr{\forall_{s\leq t}x/2+B_{s}\geq W_{s}+Aj(s)|W}/t}_{t\geq0},
\]
 is $L^{p}$-uniformly integrable. This follows by easy calculations
from Fact \ref{fact:uniformIntegrability} using relation (\ref{eq:OUandBMRelation}). 
\end{proof}

We end this section with a proof of Fact \ref{fact:IIDwall}.
\begin{proof}[Proof of Fact \ref{fact:IIDwall}]
Let $(X_k)_{k \geq 0}$ be a sequence of i.i.d. random variables such that $\E(X_k^{2+\epsilon})<+\infty$. By Borel-Cantelli lemma, we have
\[
  L \eqdef \max\left\{n \in \N : |X_n| \geq n^{1/(2+\epsilon)} \right\} < +\infty \text{ a.s.}
\]
Therefore, we have
\[
  \limsup_{N \to +\infty} \frac{\log \pr{\forall_{n \leq N} B_n \geq X_n | X}}{\log n}
  \leq \limsup_{N\to +\infty} \frac{\log \pr{\forall_{L < n \leq N} B_n \geq n^{1/(2+\epsilon)}|L}}{\log n} \leq \frac{-1}{2} \text{a.s.}
\]
Similarly, setting $M=\max_{n \leq L} X_n$, we have
\[  \liminf_{N \to +\infty} \frac{\log \pr{\forall_{n \leq N} B_n \geq X_n | X}}{\log n}
  \geq \liminf_{N\to +\infty} \frac{\log \pr{\forall_{n \leq N} B_n \geq -M-n^{1/(2+\epsilon)}|L,M}}{\log n} \geq \frac{-1}{2} \text{a.s.}
\]
\end{proof}

\section{Proof of the facts for random walks}
\label{sub:ProofsRW}

We now use definitions of Section \ref{sub:Dependent-version-of}. We write $S$ a random walk in random environment $\mu$, and set
\[
  W_n = - \mathbb{E}( S_n | \mu) \quad \text{ and } \quad B_n = S_n + W_n.
\]
To make the notation more clear in this section we assume that we have two probability spaces $(\Omega,\mathcal{F},\mathbb{P})$, $(\hat{\Omega},\mathcal{G},\mathbb{Q})$
which supports $B$ and $\mu$ respectively. The~measure $\mathbb{P}$
depends on the realization of $\mu$, which is made implicit in the notation.
Thus we are going to prove 
\begin{multline}
	\lim_{N\to+\infty}\frac{\log\pr{\forall_{n\leq N}x+B_{n}\geq W_{n}+f(n), B_N - W_N\in (aN^{1/2}, b N^{1/2}) }}{\log N}\\=\begin{cases}
	-\gamma\rbr{\sqrt{\frac{\Var{W_{1}}}{\Var{B_{1}}}}} & \text{ on }\mathcal{A}_{x},\\
	-\infty & \text{ on }\mathcal{A}_{x}^{c}.
	\end{cases},\quad\mathbb{Q}\text{-a.s.}\label{eq:aimAim}	
\end{multline}
Further to simplify the notation we put $\gamma\eqdef\gamma\rbr{\sqrt{\frac{\Var{W_{1}}}{\Var{B_{1}}}}}$
and 
\begin{equation}
p_{N}\eqdef\pr{\forall_{n\leq N}x+B_{n}\geq W_{n}+f(n), B_N - W_N \in (aN^{1/2}, bN^{1/2})}.\label{eq:pN}
\end{equation}
We need a bound that the inhomogeneous random walk $B$ grows fast.
This will be contained in the first two lemmas of this section. We
will use tilting of measure. Let us denote the increments of $B$
by 
\[
X_{n}\eqdef B_{n}-B_{n-1}.
\]
Let us recall $C_{1}$ from the assumption (A2). For any $\theta\in[0,C_{1}]$
and $n\in\N$ we define a probability measure $\mathbb{H}_{n,\theta}$
by 
\begin{equation}
\frac{\dd{\mathbb{H}}_{n,\theta}}{\dd{\pp}}\eqdef e^{\theta X_{n}}\psi_{n}^{-1}(\theta),\quad\psi_{n}(\theta)=\ev{}e^{\theta X_{n}}.\label{eq:tiltedMeasure}
\end{equation}
The tilting is supposed to ``increase'' $X_{n}'$s. The following
lemma quantifies this 
\begin{lem}
\textup{\label{lem:beastofRevelation}There exist $\theta_{0}\in(0,C_{1})$
and $0<c\leq\tilde{c}$ such that for any $\theta\leq\theta_{0}$
and $n\in\mathbb{N}$ we have
\[
\mathbb{H}_{n,\theta}(X_{n})\in[c\theta\E X_{n}^{2},\tilde{c}\theta\E X_{n}^{2}].
\]
}\end{lem}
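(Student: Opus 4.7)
The plan is to exploit the identity $\mathbb{H}_{n,\theta}(X_n) = \psi_n'(\theta)/\psi_n(\theta) = \frac{d}{d\theta}\log\psi_n(\theta)$ and Taylor-expand around $\theta=0$. Since $X_n$ is $\pp$-centered by the very construction $B_n = S_n - \E(S_n|\mu)$, we have
\[
\psi_n(0)=1,\qquad \psi_n'(0)=\E X_n=0,\qquad \psi_n''(0)=\E X_n^2.
\]
The claim therefore reduces, via
\[
\psi_n'(\theta)=\int_0^\theta \psi_n''(s)\,ds\quad\text{and}\quad \psi_n(\theta)=1+\int_0^\theta\!\!\int_0^u \psi_n''(s)\,ds\,du,
\]
to showing that $\psi_n''(s)=\E[X_n^2 e^{sX_n}]$ is comparable to $\E X_n^2$, uniformly in $n$ and in $s\in[0,\theta_0]$ for some $\theta_0\in(0,C_1)$ to be chosen.

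First I would collect the moment and exponential estimates that follow from assumption (A2). Namely, $\E e^{C_1|X_n|}\le C_2$ gives $\E|X_n|^k\le k!\,C_2/C_1^k$ uniformly in $n$, and for any $\theta_0<C_1$ the quantity $\E[X_n^2 e^{\theta_0|X_n|}]$ is uniformly bounded by a constant (write $x^2 e^{\theta_0 x}\le K(\theta_0,C_1)e^{C_1 x}$ and apply (A2)). Jensen's inequality together with $\E X_n=0$ also gives $\psi_n(\theta)\ge 1$, while the upper bound on $\psi_n$ follows similarly from (A2).

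Next I would estimate $\psi_n''(s)$ from both sides near $s=0$. The upper bound is immediate: $\psi_n''(s)\le\E[X_n^2 e^{\theta_0|X_n|}]\le K_1$. For the lower bound, use $e^{sX_n}\ge e^{-s|X_n|}\ge 1 - s|X_n|$ to get
\[
\psi_n''(s)\ge \E X_n^2 - s\,\E|X_n|^3,
\]
and then refine the deficit using $|X_n|^3 = X_n^2\cdot|X_n|$ together with Cauchy--Schwarz and (A2) to rewrite the correction as $s$ times a quantity controlled by $\E X_n^2$ rather than by a naked constant. After integration this produces
\[
\psi_n'(\theta) = \theta\,\E X_n^2\,\bigl(1+R_n(\theta)\bigr),\qquad \psi_n(\theta)=1+O(\theta^2),
\]
with $R_n(\theta)$ satisfying a uniform bound in $n$; dividing yields the two-sided comparison with constants $c,\tilde c$ not depending on $n$.

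The hard part will be securing the uniformity of $R_n(\theta)$: if one merely uses $\E|X_n|^3\le M_3$, the resulting bound $|R_n(\theta)|\lesssim \theta M_3/\E X_n^2$ is useless when the conditional variance $\E X_n^2$ happens to be small. The correct refinement is to split the integrand as $X_n^2\cdot(e^{sX_n}-1)$ and write $|e^{sX_n}-1|\le s|X_n|e^{s|X_n|}$, so that
\[
\bigl|\psi_n''(s)-\E X_n^2\bigr|\le s\,\E\!\left[X_n^2\cdot|X_n|\,e^{s|X_n|}\right],
\]
then use Cauchy--Schwarz to bound the last expectation by $\sqrt{\E X_n^2}\cdot\sqrt{\E[X_n^4 e^{2s|X_n|}]}$, with the second factor uniformly bounded via (A2) when $2\theta_0<C_1$. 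Iterating this bookkeeping—so that the remainders are always expressed in terms of $\E X_n^2$ itself and not of absolute constants—picks out $\theta_0$ small enough that $|R_n(\theta)|\le 1/2$ uniformly, giving $c=1/2$ and $\tilde c=3/2$ (say). Everything else is a routine assembly of the inequalities.
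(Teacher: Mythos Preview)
Your overall strategy---Taylor expand $\psi_n'(\theta)/\psi_n(\theta)$ around $\theta=0$ using the uniform derivative bounds coming from (A2)---is exactly what the paper does. The paper bounds $|\psi_n''|,|\psi_n'''|\le C$ via Cauchy's estimate and writes $\psi_n'(\theta)=(\E X_n^2)\theta+o(\theta)$, $\psi_n(\theta)=1+O(\theta^2)$, without ever discussing whether the $o(\theta)$ is small \emph{relative to} $(\E X_n^2)\theta$ uniformly in $n$. You correctly flag this as the delicate point.

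However, your proposed Cauchy--Schwarz repair does not close the gap. From $\E\bigl[X_n^2\cdot|X_n|e^{s|X_n|}\bigr]\le\sqrt{\E X_n^2}\,\sqrt{\E[X_n^4e^{2s|X_n|}]}$ you obtain $|\psi_n''(s)-\E X_n^2|\le sK\sqrt{\E X_n^2}$, hence after integration $|\psi_n'(\theta)-\theta\E X_n^2|\le\tfrac12 K\theta^2\sqrt{\E X_n^2}$ and therefore
\[
|R_n(\theta)|\le \frac{K\theta}{2\sqrt{\E X_n^2}},
\]
which is \emph{not} bounded uniformly in $n$ unless the conditional variance $\E X_n^2$ is bounded away from $0$---and nothing in (A1)--(A3) guarantees that. ``Iterating the bookkeeping'' does not help: a further Cauchy--Schwarz on $\E[X_n^4e^{2s|X_n|}]$ produces higher absolute moments, not additional powers of $\E X_n^2$. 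Concretely, if $\mu_1$ can (with positive $\mathbb{Q}$-probability) be the two-point law putting mass $p=a^{-2}e^{-C_1 a}$ at $a$ and mass $1-p$ at $-pa/(1-p)$, with $a$ unbounded in the support of $\mathbb{Q}$, then (A2) holds while $\mathbb{H}_{n,\theta}(X_n)/(\theta\E X_n^2)\sim(e^{\theta a}-1)/(\theta a)$ is unbounded for any fixed $\theta>0$. So the final claim that one can choose $\theta_0$ with $|R_n(\theta)|\le\tfrac12$ for all $n$ is unjustified; both your argument and the paper's shortcut implicitly require something like $\mathrm{ess\,inf}_{\mu}\E(X_1^2\mid\mu)>0$, under which the simple Taylor bound with $|\psi_n'''|\le C$ already suffices and no Cauchy--Schwarz refinement is needed.
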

\begin{proof}
By (\ref{eq:tiltedMeasure}) we have
\[
\mathbb{H}_{n,\theta}(X_{n})=\psi_{n}^{-1}(\theta)\ev{}X_{n}e^{\theta X_{n}}.
\]
The proof will be finished once we show that for any $n$ and small
enough $\theta$ we have 
\begin{equation}
\ev{}X_{n}e^{\theta X_{n}}=(\ev{}X_{n}^{2})\theta+o(\theta),\quad\psi_{n}(\theta)=1+O(\theta^{2}).\label{eq:questions}
\end{equation}
By the assumption of the uniform exponential integrability in (A2)
and Cauchy's estimate \cite[Theorem 10.26]{Rudin:1987lr} for any
$n$ and $0\leq\theta\leq C_{1}/2$ we get 
\[
\psi_{n}''(\theta)\leq C,\quad|\psi_{n}'''(\theta)|\leq C,
\]
for some $C>0$. By the assumptions $\psi_{n}(0)=1$ and $\psi_{n}'(0)=\E X_{n}=0$,
thus the second statement of (\ref{eq:questions}) follows by the
Taylor formula (with the Lagrange reminder). For the first one we
notice that $\ev{}X_{n}e^{\theta X_{n}}=\psi_{n}'(\theta)$, $\psi_{n}''(0)=\ev{}X_{n}^{2}$
and again apply the Taylor expansion. 
\end{proof}
We present now the aforementioned bound. 
\begin{lem}
\label{thm:veryFast}There exist $c,C>0$ such that for large
enough $N$ on the event 
\[
\cbr{\sum_{n=1}^{N}\ev{}(X_{n}^{2})\geq N\mathbb{Q}(\ev{}X_{i}^{2})/2}
\]
we have
\[
\pr{B_{N}\geq c\sqrt{N}\log\log N}\geq N^{-C(\log\log N)^{2}}.
\]
\end{lem}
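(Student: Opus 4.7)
The plan is to use the family of tilted measures $\mathbb{H}_{n,\theta}$ introduced in \eqref{eq:tiltedMeasure} together with the mean estimate proved in Lemma \ref{lem:beastofRevelation}, and then to push the bound back to $\pp$ by a standard change-of-measure argument. Throughout we work conditionally on $\mu$ (so $B$ is, conditionally, a sum of independent centred variables with uniform exponential integrability by (A2)).

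First I would fix the tilting parameter $\theta_{N}\eqdef\alpha\,(\log\log N)/\sqrt{N}$, with $\alpha>0$ a small constant to be chosen later so that $\theta_N \leq \theta_0$ for all large enough $N$, and define the product tilted law $\mathbb{H}\eqdef\bigotimes_{n=1}^{N}\mathbb{H}_{n,\theta_{N}}$. Independence of the $X_n$'s is preserved. Lemma \ref{lem:beastofRevelation} yields
\[
  \mathbb{H}(B_{N})=\sum_{n=1}^{N}\mathbb{H}_{n,\theta_{N}}(X_{n})\geq c\,\theta_{N}\sum_{n=1}^{N}\E X_{n}^{2}\geq c'\sqrt{N}\log\log N,
\]
on the event assumed in the statement, where $c'>0$ depends only on $\mathbb{Q}(\E X_1^2)$ and $c$. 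Using (A2) one checks by direct Taylor bounds on $\psi_{n}''(\theta_{N})$ that $\mathrm{Var}^{\mathbb{H}}(X_{n})$ is bounded uniformly in $n$, so $\mathrm{Var}^{\mathbb{H}}(B_{N})\leq K N$.

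Next I would combine these two estimates via Chebyshev's inequality. Both the lower and upper bounds on $\mathbb{H}(X_n)$ from Lemma \ref{lem:beastofRevelation} give $\mathbb{H}(B_{N})\in[c'\sqrt{N}\log\log N,\,C'\sqrt{N}\log\log N]$ (the upper bound uses that $\E X_{n}^{2}$ is uniformly bounded, which follows from (A2)). Defining
\[
  A\eqdef\{B_{N}\in[c'\sqrt{N}\log\log N/2,\,2C'\sqrt{N}\log\log N]\},
\]
any $B_{N}\notin A$ deviates from $\mathbb{H}(B_{N})$ by at least order $\sqrt{N}\log\log N$, so Chebyshev gives $\mathbb{H}(A^{c})=O(1/(\log\log N)^{2})$, hence $\mathbb{H}(A)\geq1/2$ for $N$ large.

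Finally I would revert the tilting. Since $\E X_{n}=0$ implies $\psi_{n}(\theta_{N})\geq1$, on the event $A$ we have
\[
  \frac{\mathrm{d}\pp}{\mathrm{d}\mathbb{H}}=e^{-\theta_{N}B_{N}}\prod_{n=1}^{N}\psi_{n}(\theta_{N})\geq\exp\!\bigl(-2C'\alpha(\log\log N)^{2}\bigr).
\]
Therefore
\[
  \pr{B_{N}\geq c'\sqrt{N}\log\log N/2}\geq\pr{A}=\E^{\mathbb{H}}\!\bigl[\mathbb{1}_{A}\tfrac{\mathrm{d}\pp}{\mathrm{d}\mathbb{H}}\bigr]\geq\tfrac{1}{2}e^{-C_{4}(\log\log N)^{2}},
\]
which, for $N$ large, is bounded below by $N^{-C(\log\log N)^{2}}$.

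The main obstacle is the uniform control of the tilted variance and of $\psi_{n}(\theta_{N})$: both require using (A2) to dominate the moment generating functions uniformly in $n$ and in the realization of $\mu$, so that the Chebyshev step and the bound $\prod_n \psi_n(\theta_N)\geq 1$ hold without introducing further $\mu$-dependence. Everything else is a routine large-deviation tilting computation.
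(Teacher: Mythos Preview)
Your proof is correct and in fact both simpler and sharper than the paper's. The paper uses a \emph{time-varying} tilt $b_n=(\theta_0 n^{-1/2}\log\log n)\vee 0$, which pushes the mean of every partial sum $B_n$ to order $\sqrt{n}\log\log n$; this forces an Abel summation to control $\sum_n b_n\tilde X_n$ and an auxiliary event $\mathcal{B}_N=\{\forall_n\,|\tilde B_n|\le C_2 a_n\}$ on all partial sums. Your constant tilt $\theta_N=\alpha(\log\log N)/\sqrt{N}$ only centres the terminal value $B_N$ correctly, but since the lemma concerns only $B_N$ this suffices, and the argument collapses to a one-step Chebyshev estimate plus a direct Radon--Nikodym bound. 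As a bonus you obtain $\pr{B_N\ge c\sqrt{N}\log\log N}\ge\tfrac12 e^{-C(\log\log N)^2}$, which is strictly stronger than the stated $N^{-C(\log\log N)^2}$; the extra $\log N$ in the paper's exponent comes from $\sum_{n=1}^N b_n^2\asymp(\log N)(\log\log N)^2$, a cost of the time-varying scheme that is not actually needed for this statement. The uniform bounds you flag as the ``main obstacle'' (on $\mathrm{Var}^{\mathbb H}(X_n)$ and on $\psi_n(\theta_N)\ge 1$) are indeed immediate from (A2) and Jensen, exactly as you say.
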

\begin{proof}
We define $a_{n}\eqdef\frac{1}{4}\theta_{0}cn^{1/2}\log\log n$, where
$\theta_{0},c$ are given by Lemma \ref{lem:beastofRevelation} and
consider the events $\mathcal{A}_{N}\eqdef\cbr{S_{N}\geq a_{N}}.$ We
denote also $b_{n}\eqdef(\theta_{0}n^{-1/2}\log\log n)\vee0$ and
let us define the tilted measure $\mathbb{P}_{N}$ by
\begin{equation}
\frac{\dd{\mathbb{P}_{N}}}{\dd{\mathbb{P}}}\eqdef\Lambda_{N},\quad\Lambda_{N}\eqdef\exp\rbr{\sum_{n=1}^{N}b_{n}X_{n}}\prod_{n=1}^{N}\psi(b_{n})^{-1}.\label{eq:tiltedMeasureDef}
\end{equation}
Further, we write $q_{N}\eqdef\pr{\mathcal{A}_{N}}=\mathbb{P}_{N}\rbr{1_{\mathcal{A}_{N}}\Lambda_{N}^{-1}}.$
We have to estimate 
\[
q_{N}=\mathbb{P}_{N}\rbr{1_{\mathcal{A}_{N}}\Lambda_{N}^{-1}}=\rbr{\prod_{n=1}^{N}\psi(b_{n})}\mathbb{P}_{N}\rbr{1_{\mathcal{A}_{N}} \exp\rbr{-\sum_{n=1}^{N}b_{n}X_{n}}}\geq\mathbb{P}_{N}\rbr{1_{\mathcal{A}_{N}}\exp\rbr{-\sum_{n=1}^{N}b_{n}X_{n}}}.
\]
We introduce $\tilde{X}_{n}\eqdef X_{n}-\E_{N}X_{n}$ and accordingly
$\tilde{B}_{n}\eqdef\sum_{i=1}^{n}\tilde{X}_{i}$. In our notation
\[
q_{N}\geq\exp\rbr{-\sum_{i=1}^{N}(\mathbb{E}_{N}X_{n})b_{n}}\mathbb{P}_{N}\rbr{1_{\mathcal{A}_{N}}\exp\rbr{-\sum_{n=1}^{N}b_{n}\tilde{X}_{n}}}.
\]
Now, by Lemma \ref{lem:beastofRevelation} and the assumption (A2) we obtain 
\[
\sum_{n=1}^{N}(\E_{N}X_{n})b_{n}\leq C_1 \sum_{n=1}^{N}b_{n}^{2}\leq 2C_{1}(\log N)(\log\log N)^{2},
\]
for $C_{1}>0$. Next, we apply the Abel transform 
\[
\sum_{n=1}^{N}b_{n}\tilde{X}_{n}=\sum_{n=1}^{N}b_{n}(\tilde{B}_{n}-\tilde{B}_{n-1})=\tilde{B}_{N}b_{N}+\sum_{n=1}^{N-1}(b_{n}-b_{n+1})\tilde{B}_{n}.
\]
We define events $\mathcal{B}_{N}\eqdef\cbr{\forall_{n\leq N}|\tilde{B}_{n}|\leq C_{2}a_{n}}$,
for some $C_{2}>0$. We have an elementary estimation $|b_{n}-b_{n+1}|\leq C_{3}n^{-3/2}\log\log n$,
$C_{3}>0$. Putting things together we obtain 
\begin{align*}
q_{N} & \geq N^{-C_{1}(\log\log N)^{2}}\mathbb{P}_{N}\rbr{1_{\mathcal{A}_{N}\cap\mathcal{B}_{N}}\exp\rbr{-|\tilde{B}_{N}b_{N}|-C_{3}\sum_{n=1}^{N-1}|n^{-3/2}\log\log n)||\tilde{B}_{n}|}}\\
 & \geq N^{-C_{1}(\log\log N)^{2}}\mathbb{P}_{N}\rbr{1_{\mathcal{A}_{N}\cap\mathcal{B}_{N}}\exp\rbr{-\tilde{B}_{N}b_{N}-C_{4}\sum_{n=1}^{N-1}n^{-1}(\log\log n)^{2}}}\\
 & \ge N^{-C_{5}(\log\log N)^{2}}\mathbb{P}_{N}\rbr{\mathcal{A}_{N}\cap\mathcal{B}_{N}},
\end{align*}
where we introduced $C_{4},C_{5}>0$. We notice that 
\[
\mathcal{A}_{N}\supseteq\cbr{\tilde{B}_{N}\geq a_{N}-\sum_{n=1}^{N}\E_{N}X_{i}}\supseteq\cbr{\tilde{B}_{N}\geq-a_{N}/2}.
\]
Finally, we leave to the reader proving that $\liminf_{N\to\infty}\mathbb{P}_{N}(\mathcal{A}_{N}\cap\mathcal{B}_{N})>0,$
which concludes the proof. 
\end{proof}
Let us recall the event $\mathcal{A}_{x}$ defined in (\ref{eq:canStay2}). Let us denote by $\tilde{p}_N$ the version of 
$p_{N}$ from (\ref{eq:pN}) without condition $B_N - W_N \in (aN^{1/2}, bN^{1/2})$ i.e.
\begin{equation}
\tilde p_{N}\eqdef\pr{\forall_{n\leq N}x+B_{n}\geq W_{n}+f(n)}.\label{eq:tpN}
\end{equation}
In the following lemma we prove a crude bound corresponding to the bound from above in Theorem~\ref{thm:BRWBasicExtended}. Namely
\begin{lem}
\label{lem:crudeLowerBound}We have 
\[
\liminf_{N\to+\infty}\frac{\log \tilde p_{N}}{N^{2}}\geq\begin{cases}
0 & \text{ on }\mathcal{A}_{x},\\
-\infty & \text{ on }\mathcal{A}_{x}^{c}.
\end{cases}\quad\mathbb{Q}-a.s.
\]
\end{lem}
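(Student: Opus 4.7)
On $\mathcal{A}_x^c$ the claim is immediate: by the nested definition \eqref{eq:canStay2}, there is a (random) $N_0$ with $\tilde p_{N_0}=0$, and monotonicity of $\tilde p_N$ in $N$ then forces $\log\tilde p_N = -\infty$ for all $N \geq N_0$.

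On $\mathcal{A}_x$ the plan is to establish in fact the stronger bound $\log\tilde p_N \geq -CN + O(1)$ $\mathbb{Q}$-a.s., via an exponential tilting of the $B$-increments. Fix a small $\theta \in (0,\theta_0]$ with $\theta_0$ from Lemma~\ref{lem:beastofRevelation}, and introduce the tilted measure on $\mathcal{F}_N$ with density $\Lambda_N^\theta = e^{\theta B_N}/\prod_{k=1}^N\psi_k(\theta)$. Under $\mathbb{P}_\theta$, conditionally on $\mu$, the $X_k$ remain independent with mean $m_k \in [c\theta\sigma_k^2,\tilde c\theta\sigma_k^2]$ for $\sigma_k^2 \eqdef \mathbb{E}(X_k^2|\mu_k)$ (Lemma~\ref{lem:beastofRevelation}), and variance uniformly bounded in $k$ by A2. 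By A1 and the strong law for the i.i.d.\ sequence $(\sigma_k^2)$, $\mathbb{Q}$-a.s.\ one has $\mu_n \eqdef \sum_{k=1}^n m_k \sim c\theta\Var{B_1}\,n$; simultaneously the law of the iterated logarithm for $W$, combined with $|f(n)|=o(n^{1/2-\epsilon})$, yields a $\mathbb{Q}$-a.s.\ finite $N_1(\mu)$ with $W_n + f(n) - x \leq \mu_n/4$ for all $n \geq N_1$.

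I would then split $\{0,\dots,N\}$ at time $N_1$. On the initial segment, positivity of $\mathbb{P}$ on $\mathcal{A}_x$, together with a short argument using A2 to force $B_{N_1}$ into a bounded interval $J$ around $\mu_{N_1}/2$ while staying above the wall, yields a $\mathbb{Q}$-a.s.\ positive $p_0(\mu)$ with
\[
\mathbb{P}_\theta\bigl(\forall n \leq N_1:\, x+B_n \geq W_n+f(n),\ B_{N_1} \in J\bigr) \geq p_0(\mu),
\]
using equivalence of $\mathbb{P}$ and $\mathbb{P}_\theta$ on $\mathcal{F}_{N_1}$. On the tail $[N_1,N]$, applying Kolmogorov's maximal inequality to the $\mathbb{P}_\theta$-martingale $B_n - \mu_n$ shows that the oscillation $\sup_{N_1\leq n \leq N}|B_n - \mu_n|$ stays below $\mu_{N_1}/4$ with probability at least some $q_0 > 0$; by our choice of $N_1$, this both keeps $B_n$ above the wall throughout $[N_1,N]$ and forces $B_N \leq 2\mu_N$. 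Concatenating via the Markov property and the change-of-measure formula,
\[
\tilde p_N = \mathbb{E}_\theta\bigl[\mathbf{1}_{A_N}/\Lambda_N^\theta\bigr] \geq e^{-2\theta\mu_N + \sum_{k=1}^N \log\psi_k(\theta)}\,\mathbb{P}_\theta(A_N \cap \{B_N \leq 2\mu_N\}) \geq e^{-CN}\,p_0(\mu)\,q_0,
\]
where $A_N$ denotes the event appearing in \eqref{eq:tpN} and $\log\psi_k(\theta)=O(\theta^2)$ uniformly in $k$ by A2 and Cauchy's estimates. This gives $\log\tilde p_N/N^2 \to 0$.

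The principal obstacle is precisely this patching step on the initial window: upgrading the bare positivity ``$\tilde p_{N_1}>0$'' on $\mathcal{A}_x$ to positive probability of landing in a prescribed interval $J$ while staying above the wall. This needs a supplementary argument leveraging A2 (exponential moments of the $X_k$ let one ``rescue'' the walk onto a useful set at time $N_1$), and is where the quenched nature of the problem is most delicate, since the interval $J$ must itself depend on $\mu$ through $\mu_{N_1}$.
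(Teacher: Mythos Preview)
Your tilting strategy is the same as the paper's, and the overall architecture (tilt, split at a finite time, show the tilted walk outruns the wall, undo the tilt) is sound. There is, however, a genuine gap in the tail step.

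You assert that Kolmogorov's maximal inequality gives
\[
\mathbb{P}_\theta\Bigl(\sup_{N_1\le n\le N}|B_n-\mu_n|\le \mu_{N_1}/4\Bigr)\ge q_0>0
\]
uniformly in $N$. This is false. Under $\mathbb{P}_\theta$ the centred process $B_n-\mu_n$ is a martingale whose variance grows linearly in $n$ (the tilted increments have variance uniformly bounded away from~$0$ by A1/A2), while your threshold $\mu_{N_1}/4$ is \emph{fixed} once $N_1(\mu)$ is chosen. Kolmogorov's inequality then gives a bound that exceeds $1$ for large $N$; in fact the probability you want decays to $0$ as $N\to\infty$. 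Consequently your $q_0$ cannot be taken independent of $N$, and the concluding chain $\tilde p_N\ge e^{-CN}p_0(\mu)q_0$ collapses.

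The fix is simple and is what the paper does: you do not need the centred tilted walk to stay in a window of fixed width, only to stay above a \emph{linearly decreasing} barrier, say $B_n-\mu_n\ge -\mu_n/2$ for $n\ge N_1$. Using the uniform exponential moments from A2 one has $\mathbb{P}_\theta(B_n-\mu_n\le -\mu_n/2)\le e^{-c_1 n}$ for some $c_1>0$, and a union bound over $n\ge N_1$ gives
\[
\mathbb{P}_\theta\bigl(\forall_{n\ge N_1}\ B_n\ge \mu_n/2\bigr)\ \ge\ 1-\sum_{n\ge N_1}e^{-c_1 n}\ >\ 0
\]
for $N_1$ large, uniformly in $N$. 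This replaces your Kolmogorov step and, combined with the rest of your argument, would indeed yield the sharper $\log\tilde p_N\ge -CN+O(1)$; the paper is content with the cruder $-CN^{3/2}$ obtained by bounding all increments by $N^{1/2}$ before undoing the tilt.

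On the initial segment, the paper sidesteps the ``landing in $J$'' difficulty you flag: it only asks for $B_K\ge Kv/2$ (a one-sided constraint), and observes that the tilted law of $(B_1,\dots,B_K)$ stochastically dominates the untilted one, so the joint event $\{\text{stay above wall on }[0,K]\}\cap\{B_K\ge Kv/2\}$ is increasing and hence has positive $\mathbb{P}_\theta$-probability on $\mathcal{A}_x$. This is cleaner than patching onto a two-sided interval.
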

\begin{proof}
The proof will follow again by the change of measure techniques. Due
to a very big normalization the proof can be somewhat brutal. We fix
$b_{N}=b\in(0,C_{1})$ ($C_{1}$ as in (A2)) and use $\Lambda_{N}$
and $\mathbb{P}_{N}$ as in (\ref{eq:tiltedMeasureDef}). We denote
$\mathcal{B}_{N}\eqdef\cbr{\forall_{n\leq N}x+B_{n}\geq W_{n}+f(n)}$ and
calculate 
\[
 \tilde p_{N}= \mathbb{P}_{N}(1_{\mathcal{B}_{N}}\Lambda_{N}^{-1})\geq\mathbb{P}_{N}\rbr{1_{\mathcal{B}_{N}}\exp\rbr{-\sum_{n=1}^{N}bX_{n}}}.
\]
We introduce also $\mathcal{B}_{N}^{1}\eqdef\cbr{\forall_{n\in\cbr{1,\ldots,N}}X_{n}\leq N^{1/2}}$. Trivially we have
\begin{align}
\tilde p_{N} & \geq\mathbb{P}_{N}\rbr{1_{\mathcal{B}_{N}\cap\mathcal{B}_{N}^{1}}\exp\rbr{-\sum_{n=1}^{N}bX_{n}}}\geq e^{-bN^{3/2}}\mathbb{P}_{N}(\mathcal{B}_{N}\cap\mathcal{B}_{N}^{1})\label{eq:uff}\\
 & \geq e^{-bN^{3/2}}\sbr{\mathbb{P}_{N}(\mathcal{B}_{N})-\mathbb{P}_{N}((\mathcal{B}_{N}^{1})^{c})}\geq e^{-bN^{3/2}}\sbr{\mathbb{P}_{N}(\mathcal{B}_{N})-Ne^{-cN^{1/2}}},\nonumber 
\end{align}
where the last inequality follows by the union bound and the fact
that exponential moments of $X_{n}$ are uniformly bounded, see (A2). Let us
concentrate on $\mathbb{P}_{N}(\mathcal{B}_{N})$. We denote $v=\mathbb{Q}\sbr{\ev{}(X_{i}^{2})}$
and define 
\[
L\eqdef\inf\cbr{n\geq0:\forall_{k\geq n}\sum_{i=0}^{k}\ev{}(X_{i}^{2})\geq kv/2}.
\]
Clearly $\mathbb{Q}(L<+\infty)=1$. Fix $K>0$ and denote the following
events in $\mathcal{G}$ (i.e. describing conditions on $W$) 
\[
\mathcal{A}_{K}\eqdef\mathcal{A}_{K}^{1}\cap\mathcal{A}_{K}^{2},\quad\mathcal{A}_{K}^{1}\eqdef\cbr{L<K},\quad\mathcal{A}_{K}^{2}\eqdef\cbr{\forall_{k\geq K}W_{k}\leq k^{2/3} - f(k)}.
\]
Using the Markov property we get
\begin{align}
\mathbb{P}_{N}(\mathcal{B}_{N}) & \geq\mathbb{P}_{N}\rbr{{\forall_{n\in\cbr{1,\ldots,K}}x+B_{n}\geq W_{n}+f(n)},{B_{K}\geq Kv/2}}\label{eq:start}\\
 & \quad\times\mathbb{P}_{N}\rbr{\forall_{n\in\cbr{K,\ldots,N}}B_{n}\geq W_{n}+f(n)|B_{K}=Kv/2}.\nonumber 
\end{align}
We denote the first term by $\tilde p_{K}$. It is easy to check that the
law of $B_{n}$ under $\mathbb{P}_{N}$ stochastically dominates the
one under $\mathbb{P}$ thus $\cbr{p_{K}=0}\subset\mathcal{A}_{x}^{c}.$
Conditionally on $\mathcal{A}_{K}^{2}$ we have 
\[
\mathbb{P}_{N}\rbr{\forall_{n\in\cbr{K,\ldots,N}}B_{n}\geq W_{n}+f(n)|B_{K}=Kv/2}\geq1-\sum_{n=K}^{+\infty}\mathbb{P}_{N}\rbr{B_{n}\leq nv/4|B_{K}=Kv/2}.
\]
We denote $\tilde{B}_{n}\eqdef B_{n}-\mathbb{E}_{N}B_{n}$. By Lemma
\ref{lem:beastofRevelation}, conditionally on $\mathcal{A}_{K}^{1}$,
we have $\mathbb{E}_{N}B_{k}\geq c kv/2$, thus 
\[
\mathbb{P}_{N}\rbr{\forall_{n\in\cbr{K,\ldots,N}}B_{n}\geq W_{n}+f(n)|B_{K}=Kv/2}\geq1-\sum_{n=0}^{+\infty}\mathbb{P}_{N}\rbr{\tilde{B}_{n}\leq-cnv/4|B_{0}=Kv/2}.
\]
Observing that the random variables $X_{n}$ are uniformly exponentially integrable we get a constant $c_1>0$ such that 
\[
\mathbb{P}_{N}\rbr{\forall_{n\in\cbr{K,\ldots,N}}B_{n}\geq W_{n}+f(n)|B_{K}=Kv/2}\geq1-\sum_{n=K}^{+\infty}e^{-c_1n}>0,
\]
for $K$ large enough. Putting the above estimates to (\ref{eq:start})
we obtain that for some $C>0$ 
\[
1_{\mathcal{A}_{K}}\mathbb{P}_{N}(\mathcal{B}_{N})\geq1_{\mathcal{A}_{K}}C\tilde p_{K}.
\]
Using this in (\ref{eq:uff}) we have
\[
\liminf_{N\to+\infty}\frac{\log \tilde p_{N}}{N^{2}}\geq\begin{cases}
0 & \text{ on }\mathcal{A}_{K}\cap\mathcal{A}_{x},\\
-\infty & \text{ on }\mathcal{A}_{K}^{c}\cup\mathcal{A}_{x}^{c}.
\end{cases}\quad\mathbb{Q}-a.s.
\]
The proof is concluded passing $K\nearrow+\infty$ and by observing
that $1_{\mathcal{A}_{K}}\to1$, $\mathbb{Q}$-a.s.
\end{proof}
We finally pass to the proof of Theorem \ref{thm:BRWBasicExtended}. We notice that by the very definition of $\mathcal{A}_{x}$ (see \ref{eq:canStay2})
it is obvious that the convergence holds on $\mathcal{A}_{x}^{c}$.
Thus in the proofs below we concentrate on proving the convergence
on the event $\mathcal{A}_{x}$. The instrumental tool of this proof
will be the so-called KMT coupling. We choose the measure $\mathbb{Q}$
to be a special one. By \cite[Corollary 2.3]{Lifshits:2000fj} we
can find a probability space $(\tilde{\Omega},\mathcal{G},\mathbb{Q})$
with processes $\cbr{W_{k}}_{k\geq0}$ and $\cbr{\hat{W}_{k}}_{k\geq0}$, which is a random walk with the increments distributed according to $\mathcal{N}(0,\sigma_{W}^{2})$,
where $\sigma_{W}^{2}=\Var{W_{1}}$ and
\begin{equation}
\limsup_{k\to+\infty}\frac{|\hat{W}_{k}-W_{k}|}{(\log k)^{2}}=0,\quad\mathbb{Q}-\text{a.s.}\label{eq:approximation}
\end{equation}
Further we can extend the measure $\mathbb{Q}$ so that  $\hat W$ is a marginal of a Brownian motion, also denoted by $\hat W$. Slightly abusing notation we keep $\mathbb{Q}$ to denote the extended
measure and $\hat{W}$ for the Brownian motion. 

First we prove (\ref{eq:aimAim}) for this special measure. At the
end of the proof we argue how this statement implies the thesis of Theorem \ref{thm:BRWBasicExtended}. We start with the bound from above. We recall $p_N$ defined in \eqref{eq:pN}. One finds $A$ and $\epsilon>0$ such that for any $n\in \N$ we have $f(n)\geq - A n^{1/2-\epsilon}$. Further we set $\tilde p_N$ of \eqref{eq:tpN} with this function, i.e. 
\[
	 \tilde p_{N}\eqdef\log\pr{\forall_{n\leq N}x+B_{n}\geq W_{n}-A n^{1/2-\epsilon}}.
\]
In this part we will show 
\begin{equation}
\limsup_{N\to+\infty}\frac{\log p_{N}}{\log N}\leq \limsup_{N\to+\infty}\frac{\log \tilde p_{N}}{\log N} \leq-\gamma,\quad\mathbb{Q}\text{-a.s.}\label{eq:smallAim-1}
\end{equation}
We define a function $f:\mathbb{N} \times \mathcal{M}(\R)^\mathbb{N} \mapsto\R$ by 
\begin{equation}
f(n,\mu)\eqdef\sum_{i=1}^{n}\ev{}X_{i}^{2}.\label{eq:timeChange}
\end{equation}
We recall that the measure $\mathbb{P}$ depends on realization of
$W$ and that $\cbr{Z_{i}\eqdef\ev{}X_{i}^{2}}_{i\geq0}$ is a sequence
of i.i.d variables with respect to $\qq$. It is easy to check, using
the exponential Chebyshev inequality, that (A2) implies existence
of $\tilde{C}_{1}>0$ such that $Z_{i}\leq\tilde{C}_{1}$. We define
a sequence of events belonging to $\mathcal{G}$ given by
\begin{align}
\mathcal{A}_{N}\eqdef & \cbr{\forall_{k\leq N}|\hat{W}_{k}-W_{k}|\leq(\log N)^{2}}\cap\cbr{\forall_{k\leq N}\sup_{t\in[k,k+1]}|\hat{W}_{t}-\hat{W}_{k}|\leq(\log N)^{2}}\label{eq:eventAN}\\
 & \cap\cbr{\forall_{k\geq\log N}\max_{l\in\cbr{-k^{2/3},\ldots,k^{2/3}}}|\hat{W}_{k}-\hat{W}_{k+l}|\leq k^{4/9}}\nonumber \\
 & \cap\cbr{\forall_{k\geq\log N}|f(k,\mu)-k\qq(\ev{}(X_1^{2}))|\leq k^{1/2}\log k}.\nonumber 
\end{align}
We have $1_{\mathcal{A}_{N}}\to1$ $\qq$-a.s. The convergence of
the first term follows by (\ref{eq:approximation}). The proof of
the others are rather standard (we note that exponents $2/3$ and
$4/9$ can be made smaller but this is not relevant for our proof).
As an example we treat the last but one term. We set 
\[
q_k\eqdef\pr{\max_{l\in\cbr{-k^{2/3},\ldots,k^{2/3}}}|\hat{W}_{k}-\hat{W}_{k+l}|\leq k^{4/9}}.
\]
By the properties of a Wiener process we know that $\max_{l\in\cbr{-k^{2/3},\ldots,k^{2/3}}}|\hat{W}_{k}-\hat{W}_{k+l}|$
has the tails decaying faster than $\exp\rbr{-t^{2}/(4k^{2/3})}$,
for $t$ large enough. Thus 
\[
1-q_{k}\leq \pr{\max_{l\in\cbr{-k^{2/3},\ldots,k^{2/3}}}|\hat{W}_{k}-\hat{W}_{k+l}|\geq k^{4/9}}\leq\exp\rbr{-k^{2/9}/4}.
\]
This quantity is summable thus the proof follows by the standard application of the Borel-Cantelli lemma. From now on, we will work conditionally on $\mathcal{A}_{N}$. Using
its first condition we have 
\begin{equation}
\tilde p_{N}\leq\log\pr{\forall_{\log N\leq n\leq N}2(\log N)^{2}+B_{n}\geq\hat{W}_{n} - A n^{1/2-\epsilon}}.\label{eq:estt1}
\end{equation}
We use the coupling techniques also for $\mathbb{P}$. Namely, by
\cite[Theorem 3.1]{Lifshits:2000fj} on a common probability space
(denoted still by $\mathbb{P}$), we have processes $\cbr{B_{k}}_{k\geq0}$,
distributed as the random walk from our theorem and $\cbr{\hat{B}_{t}}_{t\geq0}$
a Brownian motion which approximates $B$. Recalling (\ref{eq:timeChange})
we define 
\begin{equation}
\mathcal{B}_{N}\eqdef\cbr{\forall_{k\leq N}|B_{k}-\hat{B}_{f(k,\mu)}|\leq(\log N)^{2}}\cap\cbr{\forall_{k\leq N}\sup_{t\in[k,k+1]}(\hat{B}_{t}-\hat{B}_{k})\leq(\log N)^{2}}.\label{eq:inhomgenousApproximation}
\end{equation}
Applying \cite[Theorem 3.1]{Lifshits:2000fj} to the first term and standard considerations to the second one we obtain $\log\pr{\mathcal{B}_{N}^{c}}/\log N\to_{N\to+\infty}-\infty$.
We continue estimations of (\ref{eq:estt1}) as follows
\[
\tilde p_{N}\leq\pr{{\forall_{\log N\leq n\leq N}4(\log N)^{2}+\hat{B}_{f(n,\mu)}\geq\hat{W}_{n} - A n^{1/2-\epsilon}},\mathcal{B}_{N}}+\pr{\mathcal{B}_{N}^{c}}.
\]
We extend, in the piece-wise linear fashion, the function $f$ to
the whole line with respect to its first argument. This function is
non-decreasing and we denote its generalized inverse by $g(t,\mu)\eqdef\inf\cbr{s\geq0:f(s,\mu)\geq t}$.
We change to the continuous time (writing $t$ instead of $n$). By
the second and last condition of (\ref{eq:eventAN}) for some $C>0$
we have 
\[
\tilde p_{N}\leq\pr{\forall_{C\log N\leq t\leq N/C}5(\log N)^{2}+\hat{B}_{t}\geq\hat{W}_{g(t,\mu)} - A (g(t,\mu))^{1/2-\epsilon}}+\pr{\mathcal{B}_{N}^{c}}.
\]
Using two last conditions of (\ref{eq:eventAN}) one checks that $\mathcal{A}_N \subset \cbr{\forall_{t\geq C\log N}\hat{W}_{g(t,\mu)}\geq \hat{W}_{t/\qq(\ev{}(X_{1}^{2}))}-t^{4/9}}$
and thus conditionally on $\mathcal{A}_{N}$ we have 
\[
\tilde p_{N}-\pr{\mathcal{B}_{N}^{c}}\leq\pr{\forall_{C\log N\leq t\leq N/C}5(\log N)^{2}+\hat{B}_{t}\geq\hat{W}_{t/\qq(\ev{}(X_1^{2}))}-t^{4/9}-2A t^{1/2-\epsilon}}=:\hat{p}_{N}.
\]
Utilizing Theorem \ref{thm:BMExtended} one gets 
\[
\lim_{N\to+\infty}\frac{\log\hat{p}_{N}}{\log N}=-\gamma\rbr{\sqrt{\frac{\E W_{1}^{2}}{\qq(\ev{}(X_{1}^{2}))}}}.
\]
We recall that in our notation $\qq(\ev{}(X_{i}^{2}))$ is the same
as $\E\sbr{\E\rbr{B_1^{2}|\mu}}=\E B_{1}^{2}$ in the standard notation.
Recalling that $\pr{\mathcal{B}_{N}^{c}}$ is negligible we obtain
(\ref{eq:smallAim-1}).

Before passing further let us state a simple conditioning fact.
\begin{lem}
\label{lem:simpleFKG}Let $\cbr{T_{n}}_{n\geq0}$ be a random walk
and $\cbr{a_{n}}_{n\geq0}$ be a sequence. Then for any $N$ the law
$\pr{T_{N}\in\cdot|\forall_{n\in\cbr{1,\ldots,N}}T_{n}\geq a_{n}}$
stochastically dominates the law of $T_{N}$.
\end{lem}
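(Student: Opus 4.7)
The plan is to deduce this stochastic domination from the Harris--FKG inequality for product measures, applied to the independent increments of the walk. Writing $X_i \eqdef T_i - T_{i-1}$ for $i \geq 1$, the vector $(X_1,\ldots,X_N)$ consists of independent random variables, so (conditionally on $T_0$) its joint law is a product measure on $\R^N$. For any fixed $y \in \R$ I introduce the two events
\[
  \mathcal{E}_y \eqdef \cbr{T_N \geq y}, \qquad \mathcal{F} \eqdef \cbr{\forall_{n\in\cbr{1,\ldots,N}} T_n \geq a_n}.
\]
The lemma will then follow at once if I establish $\pr{\mathcal{E}_y \cap \mathcal{F}} \geq \pr{\mathcal{E}_y}\pr{\mathcal{F}}$ and divide through by $\pr{\mathcal{F}}$ (the statement is vacuous when $\pr{\mathcal{F}}=0$).

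The first step is to check that the indicators $\ind{\mathcal{E}_y}$ and $\ind{\mathcal{F}}$ are both coordinatewise non-decreasing functions of $(X_1,\ldots,X_N)$. Since $T_n = T_0 + X_1 + \cdots + X_n$ is non-decreasing in every $X_i$ with $i \leq n$, increasing any single coordinate of $(X_1,\ldots,X_N)$ only preserves the constraints $T_n \geq a_n$ for each $n \leq N$ and $T_N \geq y$, so neither $\mathcal{E}_y$ nor $\mathcal{F}$ can be destroyed. Once this monotonicity is in hand, the second step is simply to invoke the Harris--FKG inequality for product measures on $\R^N$, which yields the desired correlation bound $\pr{\mathcal{E}_y \cap \mathcal{F}} \geq \pr{\mathcal{E}_y}\pr{\mathcal{F}}$ for every $y \in \R$.

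I do not foresee any substantive obstacle. The only minor point to be careful about is that the $X_i$ need not be discrete, so one should quote the Harris--FKG inequality in the generality of products of arbitrary Borel probability measures on $\R$; alternatively one may approximate each $X_i$ by finitely supported variables, apply the classical lattice FKG inequality, and pass to the limit by dominated convergence. Everything else is immediate from the definition of the random walk.
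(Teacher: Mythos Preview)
Your argument is correct: the increments $(X_1,\ldots,X_N)$ form a product measure, both $\{T_N\geq y\}$ and $\{\forall_n\, T_n\geq a_n\}$ are coordinatewise increasing events, and Harris--FKG gives the positive correlation you need. Dividing by $\pr{\mathcal{F}}$ yields the stochastic domination of the tail distribution functions, hence of the laws.

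The paper does not write out a proof; it only remarks that the result ``follows by inductive application of the Markov property and is left to the reader.'' That route would proceed by showing, for each $n$, that conditioning on $T_n\geq a_n$ pushes the conditional law of $T_n$ (given the past) upward, and then propagating this through the chain step by step. Your FKG argument is shorter and more in the spirit of the rest of the paper, which already invokes FKG-type inequalities repeatedly (Fact~\ref{fact:(The-FKG-inequality)}, Corollary~\ref{cor:conditionalFKG}); the inductive Markov approach has the mild advantage of being entirely self-contained and not requiring any form of the FKG inequality, but at the cost of a slightly longer verification. Either route is acceptable here.
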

Its proof following by inductive application of the Markov property is easy and thus skipped. 

We pass to the bound from below. We recall (\ref{eq:pN}). Our aim is to prove 
\begin{equation}
\liminf_{N\to+\infty}\frac{\log p_{N}}{\log N}\geq\begin{cases}
-\gamma & \text{ on }\mathcal{A}_{x},\\
-\infty & \text{ on }\mathcal{A}_{x}^{c}.
\end{cases}\label{eq:aim}
\end{equation}
We denote $K_{N}\eqdef\lfloor(\log N)^{6}\rfloor$ and $A_{N}\eqdef cK_{N}^{1/2}\log\log K_{N}$
($c$ is as in Lemma \ref{thm:veryFast} ). Utilizing the Markov property we obtain
\begin{align*}
\log p_N & \geq\log q_N+\log \hat p_N,
\end{align*}
where 
\begin{align*}
	q_N &:= \pr{{\forall_{n\in\cbr{0,\ldots,K_{N}}}x+B_{n}\geq W_{n}+f(n)},{B_{K_{N}}\geq A_{N}}, {B_{K_N}\leq N^{1/3}}},\\
	\hat p_N &\eqdef \inf_{x\in [A_N, N^{1/3}]}\pr{\forall_{n\in\cbr{K_N,\ldots,N}}x+B_{n}^{K_N}\geq W_{n}+f(n), x+B_N^{K_N} - W_N \in (aN^{1/2}, b N^{1/2})}, 
\end{align*}
where for $l$ we denote $B^l_k=B_k - B_l$, $k\geq l$. For $q_N$ we utilize Lemma \ref{lem:simpleFKG} as follows
\begin{equation} 
	q_N \geq \pr{{\forall_{n\in\cbr{0,\ldots,K_{N}}}x+B_{n}\geq W_{n}}}\pr{{B_{K_{N}}\geq A_{N}}} - \pr{B_{K_N}\geq N^{1/3}}.\label{eq:intermediateAims}
\end{equation}
Let us further denote $k_{N}\eqdef\lfloor(\log K_{N})^{6}\rfloor$ and $a_{N}\eqdef k_{N}^{1/2}\log\log k_{N}$.
Applying a similar procedure we get 
\[
	\pr{{\forall_{n\in\cbr{0,\ldots,K_{N}}}x+B_{n}\geq W_{n}+f(n)}} \geq \log\pr{B_{k_{N}}\geq a_{N}}+ \log p(x,0,k_{N})+\log p(a_{N},k_{N},K_{N}),
\]
where $p(x,k,l)\eqdef\pr{\forall_{n\in\cbr{k,\ldots,l}}x+B_n^k\geq W_{n}+f(n)}$. We will prove that 
\begin{equation}
\liminf_{N\to+\infty}\frac{\hat p_N}{\log N}\geq-\gamma,\quad\mathbb{Q}-a.s.\label{eq:tmpAim}
\end{equation}
Lemma \ref{lem:crudeLowerBound} and Lemma \ref{thm:veryFast} imply
\begin{equation}
\liminf_{N\to+\infty}\frac{\log p(x,0,k_{N})}{\log N}\geq\begin{cases}
0 & \text{ on }\mathcal{A}_{x}\\
-\infty & \text{ on }\mathcal{A}_{x}^{c}
\end{cases},\quad\text{and }\quad\liminf_{N\to+\infty}\frac{\log\pr{B_{K_{N}}\geq A_{N}}}{\log N}=0,\quad\mathbb{Q}-a.s.\label{eq:hardArgument}
\end{equation}
By simple scaling arguments we notice that (\ref{eq:tmpAim}) and (\ref{eq:hardArgument}) imply 
\[
\liminf_{N\to+\infty}\frac{\log p(a_{N},k_{N},K_{N})}{\log N}\geq0,\quad\text{and}\quad\liminf_{N\to+\infty}\frac{\log\pr{B_{k_{N}}\geq a_{N}}}{\log N}=0,\quad\mathbb{Q}-\text{a.s.}
\]
We notice that by assumption (A2) for large $N$ we have $\pr{B_{K_N}\geq N^{1/3}}\leq \exp(-N^{1/4})$. Thus this term is negligible in \eqref{eq:intermediateAims} and we get
\[
	\liminf_{N\to+\infty}\frac{\log q_N}{\log N}\geq\begin{cases}
	0 & \text{ on }\mathcal{A}_{x}\\
	-\infty & \text{ on }\mathcal{A}_{x}^{c}
	\end{cases}.
\]
This together with (\ref{eq:tmpAim}) implies \eqref{eq:aim}. For (\ref{eq:tmpAim}) we will apply coupling
arguments similar to the ones in the previous proof. We keep the notation
$(W,\hat{W})$ and $(B,\hat{B})$. We will also use the events of
(\ref{eq:eventAN}). Finally, we know that for some $\epsilon>0$ we have $f(n)\leq n^{1/2-\epsilon}/2$ for $n$ large enough. We set $a',b'$ such that $a<a'<b'<b$.  Conditionally on $\mathcal{A}_{N}$ for $N$ large enough 
\[
	\hat p_N \geq \pr{\forall_{K_{N}\leq n\leq N} A_N+B_{n}^{K_N}\geq\hat{W}_{n}+n^{1/2-\epsilon}/2, B_N^{K_N} - \hat W_N \in (a' N^{1/2}, b' N^{1/2})}.
\]
Further, recalling (\ref{eq:timeChange}) and (\ref{eq:inhomgenousApproximation}) for $a'<a''<b''<b'$  
we have
\[	\hat p_{N}\geq \pr{\begin{array}{l}\forall_{K_{N}\leq n\leq N}A_{N}/2+\hat{B}_{f(n,\mu)-f(K_{N},\mu)}\geq\hat{W}_{n}+n^{1/2-\epsilon}\\ \hat{B}_{f(N,\mu)-f(K_{N},\mu)}-\hat{W}_N \in(a''N^{1/2},b'' N^{1/2})\end{array}}-\pr{\mathcal{B}_{N}^{c}}.	
\]
Similarly as in the previous case the second term will be negligible.
Let $f_{N}(\cdot,\mu)$ be the piece-wise linearization of $\cbr{K_{N},\ldots,N}\ni n\mapsto f(n,\mu)-f(K_{N},\mu)$.
It is non-decreasing thus we may define its inverse by $g_{N}(t,\mu)\eqdef\inf\cbr{s\geq0:f_{N}(s,\mu)\geq t}$. We set $v=\qq(\ev{}(X_{1}^{2}))$ (we recall that $X_1 = B_1$)
\[
\mathcal{C}_{N}\eqdef\cbr{\forall_{t\geq0}|g_{N}(t,\mu)-t/v|\leq[(\log N)^{3}\vee t^{2/3}]}.
\]
We leave to the reader verifying that $1_{\mathcal{C}_{N}}\to1$ $\mathbb{Q}$-a.s. Now conditionally on $\mathcal{C}_N$ we have
\[
	\hat p_{N}\geq\pr{\forall_{v K_{N}/2\leq t\leq M_N}A_{N}/2+\hat{B}_{t}\geq\hat{W}_{g_{N}(t,\mu)} +(g_N(t,\mu))^{1/2-\epsilon}, \mathcal{D}_N }\\-\pr{\mathcal{B}_{N}^{c}},
\]
where $M_N = vN - N^{3/4}$ and
\[
	\mathcal{D}_N \eqdef \cbr{\forall_{M_N<t<vN+N^{3/4}} \hat{B_t} - \hat{W}_{g_N(t,\mu)} \in (a''N^{1/2}, b''N^{1/2})}.
\]
Using the third condition of (\ref{eq:eventAN}) and performing simple calculations we have
\[
\mathcal{A}_{N}\cap\mathcal{C}_{N}\subset\cbr{\forall_{t\geq\log N}\hat{W}_{g_{N}(t,\mu)}\leq\hat{W}_{t/v}+t^{4/9}+(\log N)^{3}}.
\]
Therefore on $\mathcal{A}_{N}\cap\mathcal{C}_{N}$, for $N$ large
enough, we get
\[
	\hat p_{N}\geq\pr{\forall_{vK_{N}/2\leq t\leq M_N}A_{N}/4+\hat{B}_{t}\geq\hat{W}_{t/v}+t^{4/9}, \mathcal{D}_N}\\-\pr{\mathcal{B}_{N}^{c}}.
\]
We choose $a''',b'''$ such that $a''<a'''<b'''<b''$  and apply the Markov property 
\begin{multline*}
	\pr{\forall_{vK_{N}/2\leq t\leq M_N}A_{N}/4+\hat{B}_{t}\geq\hat{W}_{t/v}+t^{4/9}, \mathcal{D}_N} \\
	\geq \pr{\forall_{K_{N}/2\leq t\leq M_N}A_{N}/4+\hat{B}_{t}\geq\hat{W}_{t/v}+t^{4/9}, \hat B_{M_N} - \hat W_{M_N/v}\in (a'''N^{1/2}, b'''N^{1/2})} \\
	\times \inf_{x\in (a'''N^{1/2}, b'''N^{1/2})}\pr{\mathcal{D_N}|\hat B_{M_N} - \hat W_{M_N/v}=x}.
\end{multline*}
It is easy to check that with high probability (with respect to $\mathbb{Q}$) the last term is bigger than $1/2$. 
	Recalling that $\pr{\mathcal{B}_{N}^{c}}$ is negligible and utilizing Theorem \ref{thm:BMExtended} we obtain (\ref{eq:tmpAim}). This together
with (\ref{eq:smallAim-1}) implies (\ref{eq:aimAim}) for the special
choice of the realization of $W$ (i.e. we worked with the measure
$\mathbb{Q}$ on which we had the coupling $(W,\hat{W})$). To remove
this assumption let us consider $l$ be the space with $\R$-valued
sequences with the product topology. Given any other probability measure
$\mathbb{P}$ supporting $W$ and $B$ we have $\pr{W\in\mathcal{A}}=\mathbb{Q}(W\in\mathcal{A})$
for any $\mathcal{A}$ in the Borel $\sigma$-field of $l$. One checks
that 
\[
\mathcal{A}_{0}\eqdef\cbr{g\in l:\lim_{N\to+\infty}\frac{\log\pr{\forall_{n\leq N}x+B_{n}\geq g_{n}+f(n)},B_N-W_N \in (aN^{1/2},bN^{1/2})}{\log N}=-\gamma}
\]
belongs to this $\sigma$-field. Now we have
\[
\pr{\mathcal{A}_{0}}=\qq(\mathcal{A}_{0})=1.
\]
This concludes the proof of Theorem \ref{thm:BRWBasicExtended}. We skip the proof of Theorem \ref{thm:BRWBasicExtended2}, it follows by rather simple modifications of the above proof.

We are still left with 
\begin{proof}[Proof of Fact \ref{fact:nonInfinity}] The first part of the fact is easy
e.g. by the Hsu\textendash Robbins theorem. For the second part let us consider first that $\sup S_{B}=+\infty$.
Then every step of $B$ can be bigger than the one of $W$ thus clearly
for any $N$ we have $\pr{\forall_{n\leq N}x+B_{n}\geq W_{n}|W}>0$.
Now, we assume $S\eqdef\sup S_{B}<+\infty$. For any fixed $N$ we
have $\pr{B_{1}\geq S-x/(2N)}>0$. Further, one verifies that 
\[
\cbr{\forall_{n\leq N}:B_{n}-B_{n-1}\geq S-x/(2N)}\subset\cbr{\pr{\forall_{n\leq N}x+B_{n}\geq W_{n}|W}>0}.
\]
Therefore, one obtains $\pr{\mathcal{A}_{x}}=1$. The second part
of the proof goes easily by contradiction. If the condition does not
hold then there exists $S$ and $\epsilon>0$ such that $\pr{W_{1}\geq S+\epsilon}>0$
and $\pr{B_{1}\geq S-\epsilon}=0$. From this we see that $\pr{W_{\lceil2x/\epsilon\rceil}\geq\lceil2x/\epsilon\rceil S+2x}>0$
while $\pr{B_{\lceil2x/\epsilon\rceil}\geq\lceil2x/\epsilon\rceil S}=0$.
\end{proof}

\section{Discussion and Open Questions}
\label{sec:Disussion-and-Open}

In the concluding section we discuss some open questions and further
areas of research.
\begin{itemize}
\item The function $\gamma$ introduced in Theorem \ref{thm:BMBasic} calls for
better understanding. We are convinced that it is strictly convex.
It should be possible to obtain its asymptotics when $\beta\to+\infty$, we expect that $\gamma(\beta)/\beta^2 \to C$, for $C>0$.
\item The qualitative results of our paper should hold in a much greater
generality. Let us illustrate that on an example. We expect that the
convergence in Theorem \ref{thm:BRWBasic} stays valid for any processes
$\cbr{W_{n}}_{n\in\N},\cbr{B_{n}}_{n\in\N}$ whose increments are
weakly correlated (for example with the exponential decay of correlations
like $\text{Cov}(W_{n+1}-W_{n},W_{k+1}-W_{k})\sim\exp(-c|n-k|)$). Similarly
the qualitative statement of Theorem \ref{thm:BMBasic} should be
valid if processes $\cbr{B_{t}}_{t\geq0},\cbr{W_{t}}_{t\geq0}$ are
diffusions without strong drift (possibly the proper condition to
assume is that the spectral gap is $0$).
\item The case $\beta=0$ in Theorem \ref{thm:BMBasic} is well-studied,
in particular it is known that conditioning a Brownian motion to stay
above the line has a repelling effect and such a process escapes to
infinity as $t^{1/2}$ as $t\to+\infty$. Our result $\gamma(\beta)>\gamma(0)$
for $\beta\neq0$ suggests that the repelling effect is stronger when
the disorder is present. Quantifying this effect would be an interesting
research question.
\end{itemize}

\subsection*{Acknowledgments }

We would like to thank prof. Anton Bovier, prof. Giambattista Giacomin, prof. Zhan Shi and prof. Ofer Zeitouni for the interest in our work and useful comments. Moreover, we would like to thank the reviewer for remarks, which led to improvement of the paper. 

\bibliographystyle{plain}

\end{document}